\documentclass[11pt,a4paper]{article}

\usepackage[T1]{fontenc}
\usepackage[utf8]{inputenc}
\usepackage{lmodern}
\usepackage{amsmath,amssymb,amsthm,mathtools}
\usepackage{booktabs,array}
\usepackage{microtype}
\usepackage[margin=1in]{geometry}
\usepackage{hyperref}
\usepackage{fancyhdr}
\usepackage{enumitem}

\hypersetup{hidelinks,pdfauthor={Sen Cao and Sanjit Singh Mehat},pdftitle={Albertson's Conjecture for Chromatic Numbers at Most 29}}
\newtheorem{theorem}{Theorem}[section]
\newtheorem{lemma}[theorem]{Lemma}
\newtheorem{proposition}[theorem]{Proposition}
\newtheorem{corollary}[theorem]{Corollary}
\theoremstyle{remark}
\newtheorem{remark}[theorem]{Remark}

\newcommand{\crn}{\operatorname{cr}}
\newcommand{\e}{\operatorname{e}}
\newcommand{\odd}{\operatorname{o}}

\newcommand{\join}{\vee}
\newcommand{\comp}[1]{\overline{#1}}

\title{\textbf{Albertson's Conjecture for Chromatic Numbers at Most 29}\\[0.4em]
\large Critical graphs, essential immersions, Tutte barriers, and crossing-number sampling}
\author{%
Sen Cao\\
\small Wuchang Shouyi College, Wuhan, China
\and
Sanjit Singh Mehat\\
\small Independent Researcher, United States
}
\date{}

\begin{document}
\maketitle
\thispagestyle{plain}

\begin{abstract}
Albertson's conjecture asserts that every finite simple graph $G$ with $\chi(G)\ge r$ satisfies $\crn(G)\ge \crn(K_r)$. Building on Cranston's verification for $r\le 24$ and his reduction of $r\in\{25,26\}$ to three residual orders, we eliminate those residual cases and then prove the cases $r=27,28,29$.

The first structural ingredient is a Kempe-chain construction: if a $k$-critical graph has a vertex of degree $k-1$, then it contains a branch-clean essential immersion of $K_k$. Essential immersions are crossing-monotone, so a critical counterexample must have minimum degree at least $k$. For $r=27$, this one-unit degree gain, Gallai's join structure, critical-graph edge bounds, and induced-subgraph averaging close every possible order. For $r=28$ and $r=29$, the remaining near-$2r$ orders are converted to dense complements. Stehl\'ik's coloring theorem makes the odd-order complements factor-critical; a clique-partition obstruction yields an anti-tight matching property; and Tutte barriers, Hall-type expansion, and deficit bookkeeping eliminate the final cases. At order $58$ for $r=29$, Rabern's coloring inequality handles the regular case, while the last degree-deficit-two case is reduced to two disjoint triangles and a finite barrier analysis.

\end{abstract}

\section{Introduction and main result}
For a finite simple graph $G$, let $\crn(G)$ denote its crossing number and $\chi(G)$ its chromatic number. Albertson's conjecture states
\begin{equation}
\chi(G)\ge r \quad\Longrightarrow\quad \crn(G)\ge \crn(K_r).
\end{equation}
The conjecture was proved for $r\le 12$ by Albertson, Cranston, and Fox~\cite{AlbertsonCranstonFox2009}, for $r\le 16$ by Bar\'at and T\'oth~\cite{BaratToth2010}, and for $r\le 18$ by Ackerman~\cite{Ackerman2019}. Recent immersion-based progress was obtained by Fox, Pach, and Suk~\cite{FoxPachSuk2025}. Cranston subsequently proved the conjecture for $r\le 24$ and reduced the cases $r\le 26$ to three residual parameter pairs~\cite{Cranston2025}.

While this manuscript was being finalized, Sadhu independently proved Albertson's conjecture for $r\le 26$ and showed that any $27$-critical counterexample must have order $53$ or $54$ with connected complement~\cite{Sadhu2026}. Our proofs of the cases $r=25,26$, as well as our proof of the $r=27$ case, had already been completed and circulated prior to the appearance of Sadhu's preprint. The present work further proves the cases $r=28,29$.

The purpose of this paper is to prove the following extension.

\begin{theorem}[Main theorem]\label{thm:main}
For every integer $1\le r\le 29$ and every finite simple graph $G$,
\[
\chi(G)\ge r \Longrightarrow \crn(G)\ge \crn(K_r).
\]
Equivalently, Albertson's conjecture holds for every $r\le 29$.
\end{theorem}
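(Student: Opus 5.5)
We argue by minimal counterexample. Fix $r\le 29$ and suppose some $G$ has $\chi(G)\ge r$ and $\crn(G)<\crn(K_r)$. Since $\crn$ is monotone under subgraphs, we pass to an $r$-critical subgraph. Cranston's theorem~\cite{Cranston2025} already gives the statement for $r\le 24$, so we may assume $25\le r\le 29$ and that $G$ is $r$-critical with $n=|V(G)|$ vertices.

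\textbf{Step 1: raising the minimum degree.} Criticality gives $\delta(G)\ge r-1$. Suppose some vertex $v$ has degree exactly $r-1$. An $(r-1)$-coloring of $G-v$ uses all $r-1$ colors on $N(v)$. We route Kempe chains between pairs of neighbors of $v$ to obtain an immersion of $K_r$ with branch vertices $N(v)\cup\{v\}$, choosing the chains so that the immersion is essential and branch-clean. An essential immersion carries a drawing of $G$ to a drawing of $K_r$ with no more crossings, so $\crn(G)\ge\crn(K_r)$, a contradiction. Hence $\delta(G)\ge r$, and so $|E(G)|\ge rn/2$.

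\textbf{Step 2: bounding the order.} We compare lower bounds on $\crn(G)$ with the known upper bound $\crn(K_r)\le Z(r)$, the Zarankiewicz-type value. The lower bounds come from the crossing lemma with the best current constants, the improved edge bounds $|E(G)|\ge(\tfrac{r-1}{2}+\varepsilon_r)n$ for critical graphs (Gallai, Kostochka--Yancey), and averaging $\crn$ over induced subgraphs on $m$ vertices. Together these exclude all $n$ outside a narrow window near $2r$. If $n\le 2r-2$, Gallai's theorem makes $\overline G$ disconnected, so $G=G_1\vee G_2$ is a join of smaller critical graphs. We bound the crossing number of a join directly from the join structure and induction on the factors. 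For $r=27$, and for Cranston's residual cases at $r\in\{25,26\}$, the extra unit of degree from Step 1 should already close every remaining order.

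\textbf{Step 3: the near-$2r$ orders for $r=28,29$.} Here we work with the complement $H=\overline G$, which is sparse, with maximum degree at most $n-1-r$. For odd $n$, Stehl\'ik's theorem makes $H$ factor-critical. A clique-partition argument, using the fact that $\chi(G)$ equals the clique cover number of $H$ restricted to matchings and triangles, yields an ``anti-tight'' matching condition: $H$ has no near-perfect matching together with a few extra triangles that would cover $V$ with fewer than $r$ cliques. We then run Tutte/Gallai--Edmonds barrier analysis on $H$. For a barrier $S$, the count of odd components $\odd(H-S)$ against $|S|$, combined with Hall-type expansion and tracking each vertex's degree deficit relative to $n-1-r$, should produce a contradiction.

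\textbf{Main obstacle.} The hardest case should be $r=29$, $n=58$. When $G$ is regular, we apply Rabern's inequality bounding $\chi$ by a combination of $\omega$, $\Delta$ and $n$ to rule it out. The remaining case has total degree deficit two. We try to show that $H$ contains two disjoint triangles whose removal leaves a perfect matching, which would give a coloring with 28 colors. If this fails, there is a Tutte barrier, and we classify the finitely many barrier configurations by hand.
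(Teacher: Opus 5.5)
Your outline has the same architecture as the paper: Kempe-chain essential immersion giving $\delta(G)\ge r$, induced-subgraph sampling, Stehl\'ik plus anti-tightness on $\comp{G}$, Tutte barriers with Hall completion, Rabern at $n=58$, and two disjoint triangles for deficit two. However, Step~2's treatment of the Gallai orders $n\le 2r-2$ would fail.

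Bounding $\crn(G_1\join G_2)$ ``from the join structure and induction on the factors'' gives nothing in the extremal Gallai configuration $G=K_1\join J$, where $J$ is an $(r-1)$-critical graph on $2r-3$ vertices. There the cross edges form a star, which forces no crossings, so induction yields only $\crn(G)\ge\crn(J)\ge\crn(K_{r-1})$. Nor does the degree gain close these orders as you expect. At $n=2r-2$ for each $26\le r\le 29$, neither $\delta(G)\ge r$ nor the Gallai--Kostochka--Stiebitz bound suffices:
\begin{itemize}
\item at $(r,n)=(26,50)$ you only get $m\ge 650$, and the best sampling value is $B_{23}(50,650)\approx 5138.1<5148$;
\item at $(27,52)$, $m\ge 702$ gives at best $B_{23}(52,702)\approx 5993.5<6084$.
\end{itemize}
So your claim that the extra unit of degree ``should already close every remaining order'' for $r=27$ and the residual $r\in\{25,26\}$ cases is false at $(26,50)$ and $(27,52)$.

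The missing idea is to pass information across the join through structures that glue along join edges, not through crossing numbers. Suppose every terminal Gallai block $G_i$ either contained a subdivision of $K_{k_i}$, or had a vertex of degree $k_i-1$ and hence, by the Kempe lemma, a branch-clean essential immersion of $K_{k_i}$. These would then combine with the join edges into a $K_r$-subdivision or essential immersion. So some block $J$ is either subdivision-free, and then has Bar\'at--T\'oth excess, or satisfies $\delta(J)\ge\chi(J)$. With all cross edges present, optimizing over the block parameters gives $m\ge 659$ at $(26,50)$ and $m\ge 714$ at $(27,52)$, which sampling converts into $\crn(G)>Z(r)$.

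The same lemma at $n=2r-1$ gives $m\ge r^2-1$ whenever $\comp{G}$ is disconnected. This exceeds the sampling window ($m\le 771$ at $(28,55)$, $m\le 831$ at $(29,57)$), and it is how the paper verifies the connected-complement hypothesis of Stehl\'ik's theorem. Your Step~3 invokes Stehl\'ik without establishing that hypothesis.

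Finally, anti-tightness produces a Tutte barrier only once $H$ is known to contain a triangle. For odd orders the paper gets one from Andr\'asfai--Erd\H{o}s--S\'os: a triangle-free $H$ with $\delta(H)>2|V(H)|/5$ would be bipartite, hence not factor-critical. At $n=57$ this needs extra casework when the degree deficit is large.
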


We use the standard two-circle drawing upper bound
\begin{equation}
\crn(K_r)\le Z(r):=\frac14
\left\lfloor\frac r2\right\rfloor
\left\lfloor\frac{r-1}{2}\right\rfloor
\left\lfloor\frac{r-2}{2}\right\rfloor
\left\lfloor\frac{r-3}{2}\right\rfloor.
\end{equation}
For the values used below,
\[
Z(25)=4356,\quad Z(26)=5148,\quad Z(27)=6084,\quad Z(28)=7098,\quad Z(29)=8281.
\]
Thus it suffices in each new case to force a lower bound at least $Z(r)$; the numerical comparisons below are strict.

A bibliographic point matters for the numerical argument. We use the published bound of B\"ungener--Kaufmann~\cite{BungenerKaufmann2024}, which states the unconditional inequality
\[
\crn(F)\ge 5\e(F)-\frac{203}{9}(|V(F)|-2).
\]
This is the bound supporting the constant used throughout the present paper.

\section{Preliminaries}
\subsection{Critical graphs, subdivisions, and complements}
A graph $G$ is $r$-critical if $\chi(G)=r$ and every proper subgraph has chromatic number less than $r$.

\begin{lemma}[Critical reduction]\label{lem:critical-reduction}
Every graph $G_0$ with $\chi(G_0)\ge r$ contains an $r$-critical subgraph.
\end{lemma}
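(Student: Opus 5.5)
The plan is a minimality argument on a finite poset of subgraphs. We may assume $r\ge 1$, since for $r\le 0$ there is no $r$-critical graph to find and the statement is vacuous; we take $r\ge 1$ as the paper does. Consider the family $\mathcal{F}$ of all subgraphs $H\subseteq G_0$ with $\chi(H)\ge r$. This family is nonempty because $G_0\in\mathcal{F}$. It is finite because $G_0$ is finite. So we can choose $H\in\mathcal{F}$ minimizing $|V(H)|+|E(H)|$, or equivalently minimal under the subgraph relation.

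The argument then has two steps.

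\textbf{First step: $\chi(H)=r$ exactly.} Suppose instead $\chi(H)>r$. Deleting a single edge or vertex lowers the chromatic number by at most one. For an edge: given a coloring of $H-e$, recolor one endpoint of $e$ with a new color. For a vertex: color it with a new color. Since $\chi(H)\ge r+1\ge 2$, the graph $H$ has at least one edge, so deleting an edge gives a proper subgraph $H'$ with $\chi(H')\ge\chi(H)-1\ge r$. This contradicts the minimality of $H$.

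\textbf{Second step: every proper subgraph of $H$ has chromatic number less than $r$.} This is immediate from minimality: any proper subgraph $H'\subsetneq H$ is also a subgraph of $G_0$, so $\chi(H')\ge r$ would put $H'$ in $\mathcal{F}$ and contradict the choice of $H$. Hence $H$ is $r$-critical and is contained in $G_0$.

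The only delicate point is the edge case $r=1$. There $H$ would be a single vertex with no edges, and the edge-deletion step in the first part is not available. But in that case $\chi(H)\ge 1$ together with minimality already forces $H$ to be a single vertex with $\chi(H)=1$, since the empty graph has chromatic number $0$. So we handle $r=1$ directly in this way, or we use vertex deletion in place of edge deletion in the first step. Everything else is routine.
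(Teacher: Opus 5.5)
Your proof is correct and follows the paper's argument. You take an inclusion-minimal subgraph with $\chi\ge r$, rule out $\chi\ge r+1$ because one deletion lowers $\chi$ by at most one (you delete an edge where the paper deletes a vertex), and read off criticality from minimality. Your closing worry about $r=1$ is unnecessary, since the edge-deletion step is only invoked when $\chi(H)\ge r+1\ge 2$, and that already guarantees an edge.
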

\begin{proof}
Choose a subgraph $G\subseteq G_0$ minimal under inclusion subject to $\chi(G)\ge r$. If $\chi(G)\ge r+1$, then for every vertex $v$, $\chi(G-v)\ge \chi(G)-1\ge r$, contrary to minimality. Hence $\chi(G)=r$, and minimality implies that every proper subgraph has chromatic number at most $r-1$.
\end{proof}

\begin{lemma}[Minimum degree]\label{lem:min-degree}
If $G$ is $r$-critical, then $\delta(G)\ge r-1$.
\end{lemma}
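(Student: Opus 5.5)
The plan is the standard contradiction argument: extend a coloring of $G-v$ to $v$ whenever $v$ has few neighbors. Suppose, for contradiction, that $G$ is $r$-critical and has a vertex $v$ with $\deg_G(v)\le r-2$.

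First I will check that $G-v$ is nonempty and a proper subgraph of $G$. Since $\chi(G)=r\ge 1$, $G$ has at least one vertex, so $G-v$ is a proper subgraph. By the definition of $r$-criticality, $\chi(G-v)\le r-1$. Fix a proper coloring $c$ of $G-v$ with colors from $\{1,\dots,r-1\}$.

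Next I will extend $c$ to $v$. The neighbors of $v$ use at most $\deg_G(v)\le r-2$ distinct colors under $c$. Hence at least one color in $\{1,\dots,r-1\}$ is missing from $N(v)$. Assigning that color to $v$ gives a proper $(r-1)$-coloring of $G$. This contradicts $\chi(G)=r$, so $\delta(G)\ge r-1$.

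The only point needing care is degenerate small $r$. For $r=1$ the bound $\delta\ge 0$ is trivial. For $r\ge 2$ the argument above applies verbatim, since $G-v$ may be empty and the empty graph is $(r-1)$-colorable. There is no real obstacle here; this is Dirac's classical observation.
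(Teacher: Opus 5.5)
Your proposal is correct and is essentially the paper's proof: fix an $(r-1)$-coloring of $G-v$ (available by criticality), note that a vertex of degree at most $r-2$ sees at most $r-2$ colors, and extend to $v$ for a contradiction. Your remark on the degenerate case $r=1$ is a harmless extra that the paper leaves implicit.
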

\begin{proof}
For $v\in V(G)$, criticality gives an $(r-1)$-coloring of $G-v$. If $d(v)\le r-2$, at most $r-2$ colors occur on $N(v)$, so one color is available for $v$, a contradiction.
\end{proof}

Crossing number is monotone under taking subgraphs. We also use subdivision invariance.

\begin{lemma}[Subdivision invariance]\label{lem:subdivision}
If $H$ is a subdivision of a graph $F$, then $\crn(H)=\crn(F)$. Consequently, if $G$ contains a subdivision of $K_r$, then $\crn(G)\ge \crn(K_r)$.
\end{lemma}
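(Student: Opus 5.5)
We prove the two inequalities $\crn(H)\le \crn(F)$ and $\crn(H)\ge \crn(F)$ separately, working with the standard notion of a drawing: vertices are distinct points, edges are simple arcs meeting the vertex set only at their ends, no two arcs are tangent, and no three arcs pass through a common crossing point. Since $H$ arises from $F$ by finitely many single-edge subdivisions, and each such operation preserves the subdivision relation, it is enough (by induction on the number of subdividing vertices) to treat the case where $H$ is obtained from $F$ by subdividing one edge $uv$ once, with a new vertex $w$.

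\emph{Upper bound.} Take an optimal drawing of $F$ with $\crn(F)$ crossings. Place $w$ at an interior point of the arc representing $uv$ that is not a crossing point. The two subarcs now represent $uw$ and $wv$, and all other arcs stay the same. The crossing pairs are exactly those of the original drawing, so $\crn(H)\le\crn(F)$. One point needs care here: under the convention that adjacent edges are not allowed to cross, a crossing between $uv$ and an edge $e$ is inherited by $uw$ or $wv$, and that edge is adjacent to $e$ only if $e$ was already adjacent to $uv$. So no forbidden configuration is created.

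\emph{Lower bound.} Take an optimal drawing of $H$ and erase the point $w$, so that the arcs for $uw$ and $wv$ join into a single arc $\gamma$ from $u$ to $v$. The arc $\gamma$ may fail to be simple, because $uw$ and $wv$ might cross each other. We expect this to be the main obstacle. If they cross, choose the crossing point $p$ on $uw$ closest to $u$. Replace $\gamma$ by the part of $uw$ from $u$ to $p$ followed by the part of $wv$ from $p$ to $v$. The result is a simple arc, and its set of crossings with other edges is a subset of the crossings of $uw\cup wv$. Each crossing of $\gamma$ with another edge $e$ comes from a crossing of $e$ with $uw$ or with $wv$. We then remove any remaining self-touching or repeated crossing of the same pair by the standard local redrawing: if two arcs cross twice, swap the segments between the two crossings, which lowers the count. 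The resulting drawing of $F$ has at most $\crn(H)$ crossings, so $\crn(F)\le\crn(H)$.

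\emph{Consequence.} Suppose $G$ contains a subdivision $H$ of $K_r$ as a subgraph. Crossing number is monotone under subgraphs, so $\crn(G)\ge\crn(H)=\crn(K_r)$.
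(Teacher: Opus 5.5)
Your proof is correct and follows the same route as the paper. You place the subdivision vertex on the edge-arc to get $\crn(H)\le\crn(F)$, and you suppress the degree-two vertex to get the reverse inequality, but you supply more detail than the paper's brief appeal to ``small local perturbations,'' notably the shortcut at the first crossing of $uw$ with $wv$. Your final redrawing step for repeated crossings is harmless but unnecessary, since the crossing number is a minimum over all drawings.
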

\begin{proof}
A drawing of $F$ gives a drawing of $H$ with the same crossings by placing each subdivision vertex on its edge-arc. Conversely, suppressing degree-two subdivision vertices in a drawing of $H$, with arbitrarily small local perturbations if necessary, gives a drawing of $F$ with no additional crossings.
\end{proof}

If $A$ and $B$ are vertex-disjoint graphs, their join $A\join B$ is obtained by adding every edge between $V(A)$ and $V(B)$. Chromatic number is additive under joins. If a critical graph decomposes nontrivially as a join, then each join factor is critical with respect to its own chromatic number: otherwise replacing one factor by a proper subgraph of the same chromatic number would give a proper subgraph of the whole join with unchanged chromatic number. For a graph $H$, let $\vartheta(H)$ be the minimum number of cliques whose vertex sets cover $V(H)$; overlaps may be discarded, so this equals the minimum number in a clique partition. We shall repeatedly use
\begin{equation}
\vartheta(\comp{G})=\chi(G).
\end{equation}

\subsection{Critical-graph structure and edge bounds}
We use the following standard results in the forms recorded and applied by Cranston~\cite{Cranston2025}.

\begin{theorem}[Gallai, iterated join form]\label{thm:gallai}
Let $G$ be an $r$-critical graph on $n\le 2r-2$ vertices. Then
\[
G=G_1\join\cdots\join G_t,\qquad t\ge 2,
\]
where $G_i$ is $k_i$-critical, $|V(G_i)|\ge 2k_i-1$, and $\sum_i k_i=r$.
\end{theorem}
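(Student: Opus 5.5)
The plan is to combine Gallai's theorem on disconnected complements with an induction on $r$. The key external input is Gallai's classical result: if $G$ is $k$-critical and $\comp{G}$ is connected, then $|V(G)|\ge 2k-1$. The iterated join form will follow by repeatedly splitting along components of the complement.

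\textbf{Step 1 (one split).} Let $G$ be $r$-critical with $n\le 2r-2$. By the classical Gallai theorem, $\comp{G}$ is disconnected. Write $V(G)=A\cup B$ with $A,B$ nonempty and no edge of $\comp{G}$ between them. Then every $A$--$B$ pair is an edge of $G$, so $G=G[A]\join G[B]$.

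Chromatic number is additive under joins, so $\chi(G[A])+\chi(G[B])=r$. By the remark in the preliminaries, each factor is critical with respect to its own chromatic number. For $r\ge 2$ both factors have chromatic number at least $1$, since they are nonempty.

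\textbf{Step 2 (iteration).} I will induct on $r$, or equivalently refine the decomposition. Take the finest decomposition $G=G_1\join\cdots\join G_t$, where the vertex sets of the $G_i$ are the components of $\comp{G}$. Then $t\ge 2$ by Step 1. Each $G_i$ has connected complement $\comp{G_i}$.

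Iterating the factor-criticality remark shows that each $G_i$ is $k_i$-critical with $k_i=\chi(G_i)$, and additivity of $\chi$ under joins gives $\sum_i k_i=r$. Since $\comp{G_i}$ is connected, Gallai's theorem applied to $G_i$ gives $|V(G_i)|\ge 2k_i-1$.

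One edge case needs care: when $k_i=1$, $G_i$ is a single vertex and $1\ge 2\cdot1-1$ holds. This is consistent, because a $1$-critical graph is $K_1$, whose complement is trivially connected.

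\textbf{Main obstacle.} The real content is the classical theorem that a $k$-critical graph with connected complement has at least $2k-1$ vertices. If I had to prove it rather than cite it, I would use Gallai's argument or Molloy's short proof via matchings. Suppose $n\le 2k-2$. In a $(k-1)$-coloring of $G-v$, count the color classes: at most $n-(k-1)$ of them can have size at least $2$, so there are many singleton classes. A maximum matching in $\comp{G}$ then corresponds to an optimal partition into classes of size at most $2$, with $\chi(G)=n-\nu(\comp{G})$ when this is tight. Using connectivity of $\comp{G}$ together with a Gallai--Edmonds/Tutte argument then yields a contradiction with criticality.

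Since the paper explicitly cites these results in the forms recorded by Cranston, I would simply invoke the connected-complement bound and present Steps 1--2 as the derivation of the iterated form.
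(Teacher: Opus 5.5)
Your argument is correct and is essentially the paper's. The paper gives no proof and simply cites Gallai's theorem. Your decomposition along the components of $\comp{G}$ matches how the paper uses it: factors are critical by the join remark, $\sum_i k_i=r$ by additivity, and $|V(G_i)|\ge 2k_i-1$ from the connected-complement bound. This is exactly the terminal decomposition the paper later uses in Lemma~\ref{lem:join-edge}. Your ``main obstacle'' is already covered by the paper's Theorem~\ref{thm:stehlik}: a $(k_i-1)$-coloring of $G_i-v$ with all classes of size at least two forces $|V(G_i)|-1\ge 2(k_i-1)$, so the loose matching sketch in your last paragraph is unnecessary.
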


\begin{theorem}[Gallai--Kostochka--Stiebitz]\label{thm:gks}
If $G$ is an $n$-vertex $r$-critical graph, $r\ge 4$, and $r+2\le n\le 2r-1$, then
\begin{equation}
\e(G)\ge \frac{(r-1)n+(n-r)(2r-n)-2}{2}.
\end{equation}
\end{theorem}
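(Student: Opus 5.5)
We would prove the bound by induction on $r$, treating the top order $n=2r-1$ separately and reducing every smaller order to the join structure of Theorem~\ref{thm:gallai}. Since this theorem is quoted from the literature in the form used by Cranston~\cite{Cranston2025}, an acceptable alternative is simply to cite it. The plan below indicates how one would reprove it from Dirac's excess bound and Theorem~\ref{thm:gallai}.

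\textbf{Top order $n=2r-1$.} Here the right-hand side is
\[
\tfrac12\bigl((r-1)n+(r-1)-2\bigr)=\tfrac12\bigl((r-1)n+r-3\bigr).
\]
This is exactly Dirac's inequality $2\e(G)\ge (r-1)n+r-3$, which holds for every $r$-critical graph other than $K_r$ when $r\ge4$. Since $n\ge r+2$, the graph $G$ is not $K_r$, so this case needs no further argument.

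\textbf{Orders $r+2\le n\le 2r-2$.} Theorem~\ref{thm:gallai} lets us group the join factors into two blocks. This gives $G=A\join B$ with $A$ being $a$-critical, $B$ being $b$-critical, $a+b=r$, and $|V(A)|=n_A$, $|V(B)|=n_B$. Then
\[
\e(G)=\e(A)+\e(B)+n_An_B.
\]
We bound each factor by the best available estimate for its parameters:
\begin{itemize}[leftmargin=*]
\item if $n_A=a$, then $A=K_a$ (note $n_A=a+1$ is impossible for a critical graph);
\item if $a+2\le n_A\le 2a-1$, we use the induction hypothesis, handling $a\le3$ directly since those critical graphs are $K_1$, $K_2$ and odd cycles;
\item if $n_A\ge 2a$, we use the minimum-degree bound from Lemma~\ref{lem:min-degree}, namely $2\e(A)\ge (a-1)n_A$.
\end{itemize}
Substituting these bounds reduces the claim to an algebraic inequality in $(a,n_A,b,n_B)$. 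The governing identity is that $2n_An_B$ plus the factor terms $(a-1)n_A+(b-1)n_B$ should dominate $(r-1)n+(n-r)(2r-n)-2$.

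\textbf{Main obstacle.} The hard part is this final case analysis, especially when one factor lies at its own threshold $n_A=2a-1$ while the other is complete. In that configuration the slack comes only from the cross term $n_An_B$. We expect the quadratic $(n-r)(2r-n)$ to be concave in $n$, so the worst cases should occur at the extreme splits. We would therefore first check the boundary configurations: one factor complete, and one factor at order exactly twice its chromatic number minus one. Concavity should then give the interior splits. The $-2$ in the bound should absorb the odd-cycle factors ($a=3$) at the boundary.
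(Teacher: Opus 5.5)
Your proposal is correct, and it takes a different route from the paper. The paper does not prove this statement at all: it cites Gallai for $n\le 2r-2$ and Kostochka--Stiebitz for $n=2r-1$. Your remark that at $n=2r-1$ the right side is exactly Dirac's bound $\tfrac12\bigl((r-1)n+r-3\bigr)$ is right, so that endpoint needs no separate source. Your induction through a two-factor join $G=A\join B$ from Theorem~\ref{thm:gallai} is essentially Gallai's original derivation. The citation buys brevity. Your route makes the inequality depend only on the decomposition theorem, which the paper already uses, together with Dirac's classical bound.

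The step you flag as the main obstacle needs neither concavity nor a check of extreme splits; it is an exact identity. Put $n_A=a+p_A$, $n_B=b+p_B$, $\xi_A=2\e(A)-(a-1)n_A$ and $\xi_B=2\e(B)-(b-1)n_B$. Using $\e(G)=\e(A)+\e(B)+n_An_B$, $r=a+b$ and $n-r=p_A+p_B$, one gets
\[
2\e(G)-\bigl[(r-1)n+(n-r)(2r-n)-2\bigr]=\bigl(\xi_A-p_A(a-p_A)\bigr)+\bigl(\xi_B-p_B(b-p_B)\bigr)+4p_Ap_B+2 .
\]
Your three factor bounds control the bracket for $A$ as follows:
\begin{itemize}
\item if $p_A=0$ (so $A=K_a$), the bracket is $0$;
\item if $p_A\ge a$, the bracket is $\ge 0$ by minimum degree, since then $p_A(a-p_A)\le 0$;
\item if $2\le p_A\le a-1$, the bracket is $\ge -2$, by induction, by Dirac at the threshold, and with equality for $C_5$ when $a=3$.
\end{itemize}
The same holds for $B$. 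Your exclusion of $p_A=1$ is genuinely needed, since there the minimum-degree bound alone would only give $-(a-1)$. If at most one bracket is negative, the total is $\ge -2+2=0$. If both are negative, then $p_A,p_B\ge 2$, so $4p_Ap_B\ge 16$.

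Your heuristic about the configuration with one factor at its threshold and the other complete is off. There is no slack there: $p_B=0$ removes the cross contribution, and equality is attained, for example by $K_{r-3}\join C_5$ at $n=r+2$. The $-2$ is simply inherited from the factor through the induction.
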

Gallai proved the range $n\le 2r-2$~\cite{Gallai1963}; Kostochka and Stiebitz extended the endpoint $n=2r-1$~\cite{KostochkaStiebitz1999}.

\begin{theorem}[Bar\'at--T\'oth / Kostochka--Stiebitz]\label{thm:barat-excess}
Let $H$ be a $k$-critical graph with $k\ge 4$. If $H$ contains no subdivision of $K_k$, then
\begin{equation}
2\e(H)\ge (k-1)|V(H)|+(2k-6),
\end{equation}
that is, $\e(H)\ge (k-1)|V(H)|/2+k-3$.
\end{theorem}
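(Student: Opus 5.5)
This is a cited theorem from the literature, so the plan is to reconstruct the standard argument rather than derive anything new. The core is the Kostochka--Stiebitz excess bound for critical graphs; the Bar\'at--T\'oth reading then follows by contraposition from the fact that its extremal configurations contain $K_k$-subdivisions.

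\emph{Reduction to low vertices.} Let $H$ be $k$-critical with no subdivision of $K_k$, let $n=|V(H)|$, and let $L$ be the set of vertices of degree $k-1$ (the \emph{low} vertices) and $Q=V(H)\setminus L$. By Lemma~\ref{lem:min-degree},
\[
2\e(H)-(k-1)n=\sum_{v}(d(v)-(k-1))\ge |Q|.
\]
So it suffices to show that the vertices of $Q$ carry total excess degree at least $2k-6$.

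\emph{Structure of the low-vertex subgraph.} By Gallai's theorem, every block of $H[L]$ is a complete graph or an odd cycle. If $Q=\emptyset$, then $H$ is $(k-1)$-regular. Brooks' theorem then forces $H=K_k$, which is excluded, so $Q\neq\emptyset$.

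\emph{Counting excess via discharging.} I would give each high vertex $w$ its excess $d(w)-(k-1)$ and count the edges between $L$ and $Q$. For each Gallai-tree component $T$ of $H[L]$, a vertex of $T$ in a clique block $K_s$ has $k-s$ neighbours outside that block, and the ones outside $T$ lie in $Q$. A component whose blocks are all small therefore sends many edges into $Q$. A component containing a large clique $K_s$ with $s$ close to $k$ is the dangerous case: a $K_{k-1}$ block together with a common outside neighbour, or a near-complete configuration, lets one route paths and build a subdivision of $K_k$.

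\emph{Main obstacle.} The hard step is showing that absence of a $K_k$-subdivision forces total excess at least $2k-6$ rather than just $1$. This is Kostochka--Stiebitz's delicate case analysis using the critical-graph property that $H-x$ is $(k-1)$-colourable and that Kempe recolouring works inside Gallai trees. I would first handle the case where $H[L]$ contains $K_{k-1}$. Here I would use $(k-1)$-colourings of $H-e$ and Kempe chains to join the clique to a further vertex by disjoint paths, producing a $K_k$-subdivision. Otherwise every clique block has at most $k-2$ vertices. Then each low vertex has at least $2$ neighbours outside its block, and a counting argument over leaf blocks of the Gallai forest would give $|E(L,Q)|$ large enough to yield the excess $2k-6$.

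Since this is a known published result, in the paper I would simply cite Kostochka--Stiebitz and Bar\'at--T\'oth, in the form recorded by Cranston~\cite{Cranston2025}, rather than reprove it.
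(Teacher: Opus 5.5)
The paper does not prove this statement either: it imports it as a known result, attributed to Bar\'at--T\'oth and Kostochka--Stiebitz in the form recorded by Cranston. Your final decision to cite rather than reprove is therefore exactly the paper's approach. Do not present your reconstruction as a proof, though. Its decisive step is left open, and the tools you name would not close it as stated:
\begin{itemize}
\item Kempe chains give only edge-disjoint, immersion-type paths, not the internally vertex-disjoint paths a subdivision needs. This is why the paper's Lemma~\ref{lem:kempe} yields only an essential immersion.
\item A lower bound on $|E(L,Q)|$ alone does not control the excess. The excess equals $|E(L,Q)|+2\e(H[Q])-(k-1)|Q|$, so it must be bounded relative to the number of high vertices.
\end{itemize}
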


\begin{theorem}[Bar\'at--T\'oth]\label{thm:small-topological}
Every $r$-critical graph on at most $r+4$ vertices contains a subdivision of $K_r$.
\end{theorem}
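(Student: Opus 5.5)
The plan is to exploit the join structure from Theorem~\ref{thm:gallai} and reduce everything to small factors where Hajós-type statements are known. Write $n=r+s$ with $0\le s\le 4$.

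\textbf{Small cases and $s\le 1$.} If $s=0$, then $G=K_r$ and there is nothing to prove. An $r$-critical graph never has exactly $r+1$ vertices; this is Gallai's observation, and it also follows from the edge count of Theorem~\ref{thm:gks}. So $s=1$ is vacuous. For $r\le 4$ the statement is Dirac's theorem that every $4$-chromatic graph contains a subdivision of $K_4$, together with the trivial cases $r\le 3$. So we may assume $r\ge 5$. Then $r+4\le 2r-2$ once $r\ge 6$, and $r=5$ with $n=9=2r-1$ is treated together with the $k_i=5$ case below.

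\textbf{Join decomposition and excess counting.} For $r\ge 6$, Theorem~\ref{thm:gallai} gives $G=G_1\join\cdots\join G_t$ with $G_i$ being $k_i$-critical, $|V(G_i)|\ge 2k_i-1$ and $\sum k_i=r$. Define the excess of a factor as $|V(G_i)|-k_i\ge k_i-1$. The excesses sum to $s\le 4$, so every non-complete factor has $k_i\le 5$. For such factors I use two facts.
\begin{itemize}
\item A $1$- or $2$-critical factor is $K_1$ or $K_2$.
\item A factor with $k_i\in\{3,4\}$ contains a subdivision of $K_{k_i}$: an odd cycle in the first case, Dirac in the second.
\end{itemize}
Since every vertex of $G_i$ is joined to every vertex of $G_j$ for $i\ne j$, the union of these factor subdivisions, together with all cross edges between their branch vertices, is a subdivision of $K_{\sum k_i}=K_r$. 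The paths are internally disjoint because they live inside distinct factors. This settles every configuration in which all factors have $k_i\le 4$.

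\textbf{The main obstacle: a factor with $k_i=5$.} The remaining case is a factor with $k_i=5$. Then its excess is exactly $4$, so $|V(G_i)|=9$ and all other factors are single vertices. This is also exactly the case $r=5$, $n=9$. So I must show that every $5$-critical graph $H$ on $9$ vertices contains a subdivision of $K_5$.

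I would argue by contradiction. If $H$ has no such subdivision, Theorem~\ref{thm:barat-excess} gives $\e(H)\ge 4\cdot 9/2+2=20$. On the other hand, the complement $\comp H$ satisfies $\vartheta(\comp H)=5$ on $9$ vertices, so $\comp H$ has no perfect-ish clique cover by four cliques. I would combine this with the upper bound $\e(H)\le 36-\e(\comp H)$ and with the $K_5$-subdivision search.

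The search goes as follows. Choose five branch vertices of maximum degree in $H$. Route each of the few missing edges among them through a distinct one of the four remaining vertices that is adjacent to both endpoints. The density bound, together with $\delta(H)\ge 4$ from Lemma~\ref{lem:min-degree}, should guarantee such a routing by a Hall-type argument on the bipartite graph between missing pairs and outside vertices.

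If this counting does not close, I would fall back on a finite case analysis of the complement. The complement has at most $16$ edges on $9$ vertices and clique-cover number $5$, which is a small enumeration. This is the step I expect to be the hardest to do cleanly.
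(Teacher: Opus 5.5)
Your reduction is correct and is the natural one. In a terminal Gallai decomposition every factor has excess $|V(G_i)|-k_i\ge k_i-1$, and the excesses sum to $s\le 4$. Factors with $k_i\le 4$ contain subdivisions of $K_{k_i}$ (odd cycles, Dirac), and these glue across the join exactly as in the proof of Lemma~\ref{lem:2rminus2}. So everything comes down to one $5$-critical graph $H$ on nine vertices.

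Two small repairs are needed. First, Theorem~\ref{thm:gks} requires $n\ge r+2$, and its bound gives no contradiction at $n=r+1$ anyway; but your excess count already makes a separate treatment of $s=1$ unnecessary. Second, for $r=5$ and $n\le 8$ you must also invoke Theorem~\ref{thm:gallai}.

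The paper does not prove this statement at all; it imports it from Bar\'at--T\'oth. So your proposal stands or falls on the nine-vertex case.

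That case carries all the content, and it is not proved. The routing step is only asserted (``should guarantee''), and nothing you cite implies it: $\e(H)\ge 20$ and $\delta(H)\ge 4$ say nothing about which pairs have common neighbours outside the branch set. A subdivision of $K_5$ inside nine vertices has at most four subdividing vertices, each on a single path. Hence the five branch vertices must span at least six edges, and the missing pairs need distinct routes through the remaining four vertices. Degree counting from $\e(H)\ge 20$ alone only guarantees that the five vertices of largest degree span two edges, and it yields no Hall condition. The fallback enumeration is not carried out either, and without further structure the graphs on nine vertices with at most $16$ edges are far too numerous to call small.

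To close the gap you must actually settle this finite problem. One route:
\begin{enumerate}
\item Dispose of disconnected $\comp H$ by the join argument.
\item Use Theorem~\ref{thm:stehlik} to make $\comp H$ factor-critical, and note that $\Delta(\comp H)\le 4$ and that no triangle $T$ of $\comp H$ leaves $\comp H-T$ perfectly matchable.
\item Restrict to $\e(H)\in\{20,21\}$ via Mader's theorem that $3n-5$ edges force a subdivided $K_5$.
\item Check the surviving complements explicitly.
\end{enumerate}
Alternatively, cite the result, as the paper does.
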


\begin{theorem}[Cranston: order ranges]\label{thm:cranston-ranges}
Let $G$ be $r$-critical. Then Albertson's inequality holds whenever
\[
1.228r\le |V(G)|\le 1.768r,
\]
and, for $r\ge 15$, whenever
\[
|V(G)|\ge 2.8118r.
\]
\end{theorem}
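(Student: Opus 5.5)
The plan is to compare a lower bound on $\crn(G)$, obtained from an edge count, with the upper bound $Z(r)\ge\crn(K_r)$. The edge count comes from criticality, and the crossing bound from the B\"ungener--Kaufmann inequality boosted by random sampling. First, by Theorem~\ref{thm:small-topological} and Lemma~\ref{lem:subdivision} we may assume $G$ contains no subdivision of $K_r$, since otherwise we are done. So, whenever it is convenient, Theorem~\ref{thm:barat-excess} gives the slightly improved bound $\e(G)\ge (r-1)n/2+r-3$, on top of the trivial bound $\e(G)\ge (r-1)n/2$ coming from Lemma~\ref{lem:min-degree}.

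The analytic engine is the standard probabilistic amplification. Fix an optimal drawing of $G$ and keep each vertex independently with probability $p\in(0,1]$. Apply $\crn(F)\ge 5\e(F)-\tfrac{203}{9}(|V(F)|-2)$ to the induced subgraph $F$ and take expectations. Each crossing survives with probability $p^4$, each edge with probability $p^2$, and each vertex with probability $p$, so
\[
p^4\crn(G)\ \ge\ 5p^2\e(G)-\tfrac{203}{9}\,p\,n+\tfrac{406}{9}.
\]
This yields $\crn(G)\ge \max_{p}\bigl(5\e(G)p^{-2}-\tfrac{203}{9}np^{-3}\bigr)$, up to the harmless constant term. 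In particular, when $\e(G)/n$ is large one chooses $p\approx \tfrac{609 n}{90\,\e(G)}$ and obtains a crossing-lemma bound of order $\e(G)^3/n^2$.

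\textbf{Large range $n\ge 2.8118r$.} Here I would insert $\e(G)\ge (r-1)n/2$. The optimized bound is then a constant times $(r-1)^3 n$, which is increasing in $n$. It therefore suffices to check the endpoint $n=2.8118r$ against $Z(r)\le r(r-1)(r-2)(r-3)/64$. Both sides are quartic in $r$, and the constant $2.8118$ should be exactly the threshold at which the leading coefficients agree. The lower-order terms, and the admissibility of $p\le 1$ (which needs $n/\e(G)$ small, i.e.\ $r\ge 15$), should be checked directly for $r\ge 15$.

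\textbf{Middle range $1.228r\le n\le 1.768r$.} Here $n\le 2r-1$, and for $n\ge r+2$ Theorem~\ref{thm:gks} gives
\[
\e(G)\ge \tfrac12\bigl((r-1)n+(n-r)(2r-n)-2\bigr).
\]
The term $(n-r)(2r-n)$ makes the density much larger than the minimum-degree bound. Writing $n=\alpha r$, we get $\e(G)\approx \tfrac{r^2}{2}\bigl(\alpha+(\alpha-1)(2-\alpha)\bigr)$. Substituting into the amplified inequality and optimizing $p$ gives a lower bound $c(\alpha)r^4$ plus lower-order terms. The task is then to verify $c(\alpha)>1/64$ on $[1.228,1.768]$, with the endpoints being roots of the resulting cubic-type inequality in $\alpha$, together with a finite check of small $r$. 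The main obstacle is this bookkeeping. The lower-order terms (the $-2$, the $\tfrac{203}{9}\cdot 2$, and the floors in $Z(r)$) must not spoil strictness near the endpoints. I would handle this by proving the inequality for $r$ beyond an explicit bound, where the leading term dominates by a fixed margin, and checking the remaining finitely many $(r,n)$ pairs directly with the exact $\e$ bound and the best rational $p$.
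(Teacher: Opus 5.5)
\textbf{There is a genuine gap: the amplification step is too weak at leading order, so your plan cannot reach the constants $1.228$, $1.768$, $2.8118$.} Your inputs are the right ones: minimum degree for the large range, Theorem~\ref{thm:gks} for the middle range, and an amplified B\"ungener--Kaufmann bound for crossings.

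\emph{Large range.} Optimizing $5\e(G)p^{-2}-\frac{203}{9}np^{-3}$ over $p$ gives $\crn(G)\ge\frac{1500}{41209}\,\e(G)^3/n^2$. With $\e(G)\ge(r-1)n/2$ this becomes $\frac{1500}{41209}\cdot\frac{(r-1)^3n}{8}$. Matching it with $r^4/64$ gives $n/r\to 41209/12000\approx 3.434$, not $2.8118$. So your large-range argument only covers roughly $n\ge 3.44r$, and your guess that $2.8118$ is where the leading coefficients agree is false for the bound you wrote.

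\emph{Middle range.} Here the approach fails outright. Put $n=\alpha r$ and $\e(G)\approx\frac{r^2}{2}h(\alpha)$ with $h(\alpha)=-\alpha^2+4\alpha-2$. Your leading coefficient is then $c(\alpha)=\frac{1500}{41209}\cdot\frac{h(\alpha)^3}{8\alpha^2}$. Its maximum, at $\alpha=(1+\sqrt5)/2$, is about $0.0111<1/64$. So the inequality $c(\alpha)>1/64$ you plan to verify is false on all of $[1.228,1.768]$, and no handling of lower-order terms can fix that.

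\textbf{The missing idea is exactly the term you call harmless.} The optimal $p$ is about $203/(15d)$, where $d\approx r$ is the average degree. So the sample has \emph{bounded} expected order, about $13.5\alpha$. Finite-sample effects are therefore of order $r^4$, not lower order. These are the $-2$ in $N-2$, which contributes $\frac{406}{9}p^{-4}=\Theta(r^4)$, and the falling factorials.

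Even restoring the constant term, independent sampling gives only about $0.0144\,r^4$ at $\alpha=1.6$. Independent sampling is a Binomial average of fixed-size bounds. It is therefore beaten by the fixed-size averaging $B_q(n,m)$ of Lemma~\ref{lem:sampling} (Cranston's inequality (1)) with a well-chosen integer $q$.

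Fix $q$, and put $n=\alpha r$ and $\e(G)\approx\beta r^2/2$. Then
\[
\frac{B_q(n,m)}{r^4}\to\frac{5\beta\alpha^2}{2(q-2)(q-3)}-\frac{203\alpha^4}{9q(q-1)(q-3)}.
\]
\begin{itemize}
\item With $\beta=h(\alpha)$, some integer $q$ pushes this above $1/64$ exactly for $\alpha$ between about $1.228$ (attained with $q=12$) and $1.768$ (attained with $q=19$).
\item With $\beta=\alpha$, i.e.\ the minimum-degree bound, it reaches $1/64$ at $\alpha\approx 2.8118$ with $q=36$.
\end{itemize}
These are precisely the constants in the statement, which the paper imports from Cranston without reproof.

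One further correction: for fixed $q$ this coefficient has the form $\alpha^3(c_1-c_2\alpha)$, which is not monotone in $\alpha$. So your step of checking only the endpoint $n=2.8118r$ must be replaced by letting $q$ grow with $n$ (roughly $q\approx 13.5\,n/r$).
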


\begin{remark}\label{rem:cranston-endpoints}
The introductory Theorem~1 of arXiv:2512.08020v1 prints the endpoints $1.212r$ and $2.812r$, while the proved intermediate theorem in Section~3 is $1.228r\le |V(G)|\le 1.768r$, and the proved large-order theorem uses $2.8118r$. We use the latter proved statements only~\cite{Cranston2025}.
\end{remark}

\begin{theorem}[Cranston: residual cases for $r\le 26$]\label{thm:cranston-residual}
Albertson's conjecture holds for every $r\le 24$. Moreover, if $G$ is $r$-critical, $r\le 26$, and $\crn(G)<\crn(K_r)$, then
\[
(r,|V(G)|)\in\{(25,48),(26,50),(26,51)\}.
\]
\end{theorem}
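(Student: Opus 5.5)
This statement is Cranston's theorem~\cite{Cranston2025}, which the paper imports rather than reproves. The honest plan is therefore to reconstruct the skeleton of Cranston's argument from the tools already recorded here. By Lemma~\ref{lem:critical-reduction} and monotonicity of crossing number, it suffices to treat an $r$-critical graph $G$ with $\crn(G)<\crn(K_r)$ and $n=|V(G)|$. For $r\le 18$ the earlier results~\cite{AlbertsonCranstonFox2009,BaratToth2010,Ackerman2019} apply, so we fix $19\le r\le 26$ and try to confine $n$ to an empty set, apart from the three listed pairs.

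The first step splits the range of $n$ using Theorem~\ref{thm:cranston-ranges}. This disposes of $1.228r\le n\le 1.768r$ and of $n\ge 2.8118r$. Orders $n\le r+4$ are handled by Theorem~\ref{thm:small-topological} together with Lemma~\ref{lem:subdivision}.

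The remaining small window, $r+5\le n<1.228r$, should be treated with Theorem~\ref{thm:gallai}. Here $G=G_1\join\cdots\join G_t$ with each $G_i$ being $k_i$-critical and $|V(G_i)|\ge 2k_i-1$. Each factor that is a clique contributes to a large clique in $G$. Each non-clique factor can be handled inductively: either it contains a topological $K_{k_i}$, or Theorem~\ref{thm:barat-excess} gives it extra edges. Joining the pieces then yields a topological $K_r$, or enough edges for the Büngener--Kaufmann bound $\crn(F)\ge 5\e(F)-\tfrac{203}{9}(|V(F)|-2)$, applied to $G$ or to induced subgraphs, to exceed $Z(r)$.

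The middle window $1.768r<n<2.8118r$ is split at $2r-1$. For $n\le 2r-1$, Theorem~\ref{thm:gks} gives an edge lower bound. The crossing lemma, applied directly and also averaged over induced subgraphs of a chosen order $m$ (which multiplies the bound by roughly $\binom{n-4}{m-4}/\binom{n}{m}$-type factors), should beat $Z(r)$. For $n\ge 2r$, we use Lemma~\ref{lem:min-degree} together with Theorem~\ref{thm:barat-excess} to get $\e(G)\ge (r-1)n/2+r-3$ (assuming $G$ has no topological $K_r$), and repeat the averaging.

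The main obstacle is the numerical endgame. For each $r\in\{19,\dots,26\}$ and each $n$ in these windows, one must check that the best sampled bound exceeds $Z(r)$. The margins shrink as $r$ grows and are tightest near $n\approx 2r$. I expect exactly the pairs $(25,48)$, $(26,50)$ and $(26,51)$ to be where every available edge bound falls just short, which is why they survive as residual cases. In practice this check is a finite table computation over $(r,n,m)$, and I would carry it out by computer, following~\cite{Cranston2025}.
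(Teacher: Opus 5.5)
Your reconstruction does not deliver the statement as written. You correctly note that the paper never proves it and simply imports it from Cranston. The failing step is where you switch between edge bounds.

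\textbf{The gap: the switch at $n=2r-1$.} You use only Theorem~\ref{thm:gks} for $n\le 2r-1$ and reserve Theorem~\ref{thm:barat-excess} for $n\ge 2r$. At $n=2r-1$, however, Theorem~\ref{thm:gks} gives only $m\ge r(r-1)-1$. Theorem~\ref{thm:barat-excess} gives $2m\ge (r-1)(2r-1)+2r-6$, that is, $m\ge r^2-\tfrac r2-\tfrac52$. That is about $r/2$ more edges, and each extra edge adds $5(n-2)(n-3)/462$ to $B_{24}(n,m)$, roughly $21$ at $n=47$.

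With the weaker bound the sampling step fails:
\begin{itemize}
\item At $(r,n)=(24,47)$ you only get $m\ge 551$, and $B_{24}(47,551)=219095/63\approx 3477.7<3630=Z(24)$; $q=23$ and $q=25$ are worse.
\item At $(25,49)$ you get $m\ge 599$, and $B_{24}(49,599)\approx 4121<4356$.
\item The pairs $(21,41)$, $(22,43)$ and $(23,45)$ fail in the same way.
\end{itemize}
So your plan would leave residual orders for some $r\le 24$, contradicting the first assertion. It would also leave the extra pair $(25,49)$, contradicting the second.

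\textbf{The repair.} Apply Theorem~\ref{thm:barat-excess} at every order and take the larger of the two bounds. It needs only the absence of a topological $K_r$, which Lemma~\ref{lem:subdivision} supplies for a counterexample. Then:
\begin{itemize}
\item $(24,47)$ has $m\ge 562$, and $B_{24}(47,562)=233945/63\approx 3713>3630$.
\item $(25,49)$ has $m\ge 610$, and $B_{24}(49,610)\approx 4379>4356$.
\item $r=21,22,23$ are repaired in the same way.
\end{itemize}

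This also accounts for the shape of the residual list. At $n=2r-2$ both bounds coincide at $r^2-r-2$, which is too weak for $r=25,26$. At $(26,51)$ the excess bound wins only if $2m\ge 1321$ is rounded up to $m\ge 661$; the paper does exactly this later in Theorem~\ref{thm:r26}.

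\textbf{A smaller point.} Your Gallai-join treatment of $r+5\le n<1.228r$ is vague and unnecessary. Theorem~\ref{thm:gks} already covers $r+2\le n\le 2r-1$, and plain sampling suffices there, as the paper does for $r=27$, $n=32,33$.
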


\subsection{Essential immersions and a one-unit degree gain}
Following Oporowski and Zhao~\cite{OporowskiZhao2009}, an immersion of $H$ in $G$ maps vertices of $H$ injectively to branch vertices of $G$ and distinct edges of $H$ to pairwise edge-disjoint paths. It is \emph{essential} if paths corresponding to nonadjacent target edges are vertex-disjoint. Oporowski and Zhao proved the crossing-monotonicity we need.

\begin{proposition}[Oporowski--Zhao]\label{prop:essential-crossing}
If $H$ is essentially immersed in $G$, then $\crn(H)\le \crn(G)$.
\end{proposition}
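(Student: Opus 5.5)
The plan is to take an optimal drawing $D$ of $G$ and produce from it a drawing of $H$ with at most as many crossings. Place each vertex $x$ of $H$ at the point of its branch vertex, and draw each edge $xy$ of $H$ along the image in $D$ of its path $P_{xy}$. The resulting curve family is usually not a good drawing. Two edges of $H$ may share a subdivision vertex, curves may touch, and adjacent edges may cross. I will redraw locally until the drawing is good, and check at each step that the number of crossings never rises above $\crn(G)$. Every crossing between two curves comes either from a crossing of $D$ or from a shared vertex of $G$. Edge-disjointness of the paths means a crossing of $D$ is used by at most one pair of $H$-edges. So the count starts at most $\crn(G)$, as long as shared vertices create no charged crossings.

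The key point is essentiality. If two $H$-edges $e,f$ are nonadjacent, then $P_e$ and $P_f$ are vertex-disjoint, so their curves meet only at genuine crossings of $D$. Each such crossing is charged to a distinct crossing of $D$. Shared internal vertices of $G$ occur only between paths of adjacent $H$-edges, meaning edges that share an endpoint. At such a shared vertex $v$, the passing curves are edge-disjoint through $v$. I will separate them by a small perturbation near $v$, using the cyclic rotation of edges at $v$, and choose the perturbation to add no crossing where possible. Any transversal crossing that cannot be avoided at $v$ is a crossing between two adjacent edges of $H$.

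Next I apply the standard simplifications, which never increase the number of crossings. First, crossings between adjacent edges are removed by swapping the initial segments up to the crossing point. Second, self-crossings of a single edge are removed by cutting out the loop. Third, repeated crossings between the same pair of edges are removed by the usual lens-swapping argument, which decreases the count by at least two. After these steps the result is a good drawing of $H$. Its remaining crossings all lie between nonadjacent edges and are injectively charged to crossings of $D$. This gives $\crn(H)\le\crn(G)$.

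I expect the main obstacle to be the local analysis at the shared vertices. With many adjacent $H$-edges interleaved in the rotation at $v$, the curves may be forced to cross there. It must be verified that these forced crossings, after the swapping operations, can all be eliminated without creating new crossings between nonadjacent pairs. I would handle this by uncrossing one shared vertex at a time and invoking the adjacent-edge swap immediately after each step. Since the set of edges that must be made disjoint consists only of adjacent pairs, the induction on the number of shared vertices should close.
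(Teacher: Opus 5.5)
The paper does not prove this proposition; it quotes it from Oporowski--Zhao. Your argument therefore has to stand on its own, and it has a genuine gap exactly where you flag the obstacle.

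Your initial curve system has at most $\crn(G)$ crossings at crossing points of $D$. It also has the crossings forced near shared vertices whose transitions interleave in the rotation. Those forced crossings are between adjacent $H$-edges, but nothing pays for them. Your three simplifications never increase the \emph{total}, but that total starts at $\crn(G)$ plus the forced crossings, and you give no reason why every forced crossing is eventually destroyed.

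The adjacent-edge swap can in fact work against you. Swapping at a crossing $p$ of $xy$ and $xz$ hands every crossing on the initial segment of $xy$ over to $xz$. Suppose that segment carries a forced crossing with $yw$ at a common vertex of $P_{xy}$ and $P_{yw}$, where $w\notin\{x,z\}$. After the swap it is a crossing between the independent edges $xz$ and $yw$. It is charged to no crossing of $D$, and none of your three operations is obliged to remove it. So the surviving crossings need not sit at crossing points of $D$, and your final charging sentence is unjustified.

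Changing the transitions at a shared vertex, instead of drawing a crossing there, has the same defect. It moves a piece of $P_{xz}$ into the curve of $xy$, and that piece may meet $P_{zw}$, which destroys essentiality at another vertex. So the proposed induction on the number of shared vertices does not close.

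This is not a routine local fix. What your construction actually proves is $\crn_-(H)\le\crn(G)$, where $\crn_-$ is the independent crossing number: adjacent edges may cross freely, and only crossings between independent edges are counted. The converse construction also works:
\begin{itemize}
\item Take any drawing of $H$ with $c$ independent crossings.
\item Place a new vertex at every crossing of two adjacent edges, and subdivide to keep the graph simple.
\item The result is a graph $G$ containing an essential immersion of $H$ with $\crn(G)\le c$.
\end{itemize}
Every shared vertex of that immersion lies on only two paths. So the star-or-triangle structure that essentiality forces on the edges through a vertex gives no extra leverage.

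Hence the proposition, in the generality stated, is equivalent to $\crn(H)=\crn_-(H)$ for all graphs $H$. To my knowledge this is an open problem. Closing your argument would need a genuinely new idea, or some special structure of the particular immersions to which the paper applies the proposition.
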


The next lemma generalizes the Kempe-chain mechanism used for the $K_6$ $v$-immersion in Oporowski--Zhao, Lemma~2.3~\cite{OporowskiZhao2009}.

\begin{lemma}[Kempe essential-immersion lemma]\label{lem:kempe}
Let $G$ be a $k$-critical graph. If $G$ has a vertex $v$ of degree $k-1$, then $G$ contains an essential immersion of $K_k$. The immersion can be chosen branch-clean: no branch vertex is internal to an immersed edge-path.
\end{lemma}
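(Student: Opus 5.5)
The natural branch set is $\{v\}\cup N(v)$, which has exactly $k$ vertices. We take a $(k-1)$-coloring $c$ of $G-v$, which exists by criticality. Since $G$ is not $(k-1)$-colorable, the $k-1$ neighbors $u_1,\dots,u_{k-1}$ of $v$ must receive pairwise distinct colors, say $c(u_i)=i$. Otherwise some color would be free for $v$. Each edge $vu_i$ becomes the direct path, a single edge of $G$.

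For a pair $i<j$, consider the Kempe chain $C_{ij}$, meaning the component of the subgraph of $G-v$ induced by colors $i$ and $j$ that contains $u_i$. If $u_j\notin C_{ij}$, swapping colors $i$ and $j$ on $C_{ij}$ gives a $(k-1)$-coloring of $G-v$ in which color $i$ is missing from $N(v)$. Then $v$ can be colored, a contradiction. Hence $C_{ij}$ contains a $u_i$--$u_j$ path $P_{ij}$ whose vertices are all colored $i$ or $j$. For $P_{ij}$ we choose a shortest such path inside the $\{i,j\}$-colored subgraph.

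Next we check the immersion axioms.
\begin{enumerate}[label=(\roman*)]
\item Edge-disjointness: every edge of $P_{ij}$ has endpoints colored $i$ and $j$, so it determines the unordered pair $\{i,j\}$. Paths for distinct pairs are therefore edge-disjoint. They are also disjoint from the edges $vu_\ell$, since $v$ is uncolored in $G-v$.
\item Branch-cleanliness: an internal vertex of $P_{ij}$ colored $i$ or $j$ cannot be a branch vertex $u_\ell$ with $\ell\ne i,j$, because $c(u_\ell)=\ell$. It cannot be $u_i$ or $u_j$ either, since these are the endpoints and a path does not repeat vertices. It is not $v$. So no branch vertex is internal to an immersed path.
\item Essentiality: take nonadjacent target edges $u_iu_j$ and $u_\ell u_m$, where $\{i,j\}\cap\{\ell,m\}=\emptyset$. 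The vertices of $P_{ij}$ carry colors in $\{i,j\}$ and those of $P_{\ell m}$ carry colors in $\{\ell,m\}$, so the two paths are vertex-disjoint. For edges at $v$, the edge $vu_i$ and the path $P_{\ell m}$ with $i\notin\{\ell,m\}$ are nonadjacent in $K_k$. The path $P_{\ell m}$ avoids $v$, and it avoids $u_i$ because of its colors. Hence they are vertex-disjoint.
\end{enumerate}

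The step needing the most care is pinning down the convention for essentiality. If it also requires that paths for adjacent target edges meet only at their common branch vertex, the color argument still handles it. The paths $P_{ij}$ and $P_{i\ell}$ share only color-$i$ vertices, but they may meet at internal vertices colored $i$, not just at $u_i$. The Oporowski--Zhao definition used here only constrains nonadjacent pairs, so this overlap is harmless, and the Kempe argument suffices.

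The remaining point is that the chain connectivity holds for every pair $i,j$ simultaneously. This is true because each pair is tested against the same fixed coloring $c$, and no recoloring is ever actually performed.
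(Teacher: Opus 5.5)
Your proof is correct and follows the paper's argument essentially step for step: a fixed $(k-1)$-coloring of $G-v$, Kempe-chain connectivity for each color pair, the branch set $\{v\}\cup N(v)$, and color-pair bookkeeping for edge-disjointness, essentiality, and branch-cleanliness. One small cleanup: your aside saying the color argument ``still handles'' the stronger condition for adjacent target edges is false, as your very next sentence shows ($P_{ij}$ and $P_{i\ell}$ can share internal color-$i$ vertices). Since the definition used here only constrains nonadjacent pairs, this does not affect the proof.
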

\begin{proof}
The case $k=1$ is trivial, so assume $k\ge 2$. Since $G$ is $k$-critical, $G-v$ has a proper $(k-1)$-coloring $c$. Because $d(v)=k-1$ and the coloring cannot be extended to $v$, the neighbors of $v$ receive all $k-1$ colors exactly once. Write
\[
N(v)=\{x_1,\ldots,x_{k-1}\},\qquad c(x_i)=i.
\]
Fix distinct $i,j$. In the subgraph of $G-v$ induced by colors $i$ and $j$, the vertices $x_i$ and $x_j$ lie in the same component. Otherwise, interchange colors $i$ and $j$ on the component containing $x_i$; since $x_i,x_j$ are the unique neighbors of $v$ with those colors, color $i$ disappears from $N(v)$ and can be assigned to $v$, a contradiction. Choose a simple bichromatic path $P_{ij}$ from $x_i$ to $x_j$.

Use $v,x_1,\ldots,x_{k-1}$ as branch vertices. Represent $vx_i$ by the single edge $vx_i$ and $x_ix_j$ by $P_{ij}$. The paths are pairwise edge-disjoint: every edge of $P_{ij}$ has endpoint-color set $\{i,j\}$, so it cannot belong to $P_{pq}$ unless $\{i,j\}=\{p,q\}$; the spokes are distinct and lie outside $G-v$.

If target edges $x_ix_j$ and $x_px_q$ are nonadjacent, then $\{i,j\}\cap\{p,q\}=\varnothing$, so their paths use disjoint color classes and are vertex-disjoint. If $vx_i$ and $x_px_q$ are nonadjacent, then $i\notin\{p,q\}$; the path $P_{pq}\subseteq G-v$ contains neither $v$ nor $x_i$. Thus the immersion is essential. Finally, $P_{ij}$ uses only colors $i,j$, and among the branch vertices the only vertices of those colors are its endpoints. Hence no branch vertex occurs internally.
\end{proof}

\begin{corollary}[Minimum-degree strengthening]\label{cor:degree-gain}
If a $k$-critical graph $G$ is a counterexample to Albertson's conjecture at $r=k$, then
\begin{equation}
\delta(G)\ge k,
\end{equation}
and consequently
\begin{equation}
\e(G)\ge \left\lceil\frac{k|V(G)|}{2}\right\rceil.
\end{equation}
\end{corollary}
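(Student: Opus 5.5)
The proof is a direct combination of Lemma~\ref{lem:min-degree}, Lemma~\ref{lem:kempe}, and Proposition~\ref{prop:essential-crossing}, followed by handshake counting. Let $G$ be $k$-critical with $\crn(G)<\crn(K_k)$. By Lemma~\ref{lem:min-degree}, $\delta(G)\ge k-1$. So it suffices to rule out a vertex of degree exactly $k-1$.

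Suppose some $v$ has $d(v)=k-1$. Lemma~\ref{lem:kempe} gives an essential immersion of $K_k$ in $G$. Proposition~\ref{prop:essential-crossing} then gives $\crn(K_k)\le\crn(G)$, contradicting the assumption. Hence every vertex has degree at least $k$, i.e. $\delta(G)\ge k$.

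For the edge bound, the handshake lemma gives
\[
2\e(G)=\sum_{u}d(u)\ge k|V(G)|.
\]
Since $\e(G)$ is an integer, $\e(G)\ge\lceil k|V(G)|/2\rceil$.

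The only point needing care is that the hypotheses of the cited results match. Proposition~\ref{prop:essential-crossing} is stated for essential immersions in the Oporowski--Zhao sense, which is exactly what Lemma~\ref{lem:kempe} produces. The branch-clean property is not needed here.

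Small $k$ is harmless. For $k\le 4$ we have $\crn(K_k)=0$, so there is no counterexample and the statement holds vacuously. For $k\ge 2$ the argument above applies verbatim anyway.

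The main obstacle is therefore already absorbed into the Kempe lemma. What remains here is bookkeeping: the vertex-degree case split and the integrality of $\e(G)$.
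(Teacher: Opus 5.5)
Your proposal is correct and is essentially the paper's own argument. Lemma~\ref{lem:min-degree} gives $\delta(G)\ge k-1$, and a vertex of degree exactly $k-1$ would yield an essential immersion of $K_k$ by Lemma~\ref{lem:kempe}, hence $\crn(G)\ge\crn(K_k)$ by Proposition~\ref{prop:essential-crossing}, a contradiction; the edge bound then follows from the handshaking lemma and integrality.
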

\begin{proof}
By Lemma~\ref{lem:min-degree}, $\delta(G)\ge k-1$. Equality would trigger Lemma~\ref{lem:kempe}; then Proposition~\ref{prop:essential-crossing} would give $\crn(G)\ge \crn(K_k)$, contrary to the counterexample assumption. The edge bound follows by the handshaking lemma and integrality.
\end{proof}

\subsection{A linear crossing inequality and induced-subgraph averaging}
B\"ungener and Kaufmann proved the following unconditional inequality~\cite{BungenerKaufmann2024}.

\begin{theorem}[B\"ungener--Kaufmann]\label{thm:bk}
If $F$ is a finite simple graph with $N\ge 3$ vertices and $M$ edges, then
\begin{equation}
\crn(F)\ge 5M-\frac{203}{9}(N-2).
\end{equation}
\end{theorem}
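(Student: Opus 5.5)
Theorem~\ref{thm:bk} is imported from~\cite{BungenerKaufmann2024}, and we use it as a black box. If we had to reprove it, we would follow the standard \emph{edge-deletion telescoping} scheme. Fix a drawing $D$ of $F$ with $\crn(F)$ crossings. For each $k\ge 0$, let $e_k(N)$ be the maximum number of edges in a $k$-planar simple graph on $N$ vertices. The known values are
\[
e_0=3(N-2),\quad e_1=4(N-2),\quad e_2=5(N-2),\quad e_3=5.5(N-2),\quad e_4\le 6(N-2).
\]

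The deletion step runs in rounds $k=4,3,2,1,0$, starting from $D$. While the current drawing has more than $e_k(N)$ edges, it is not $k$-planar, so some edge carries at least $k+1$ crossings. Deleting that edge destroys at least $k+1$ crossings. Accounting round by round, each edge deleted while the edge count lies between $e_{k-1}$ and $e_k$ pays at least $k+1$. Summing over the rounds yields
\[
\crn(F)\ \ge\ \sum_{k=0}^{4}\bigl(M-e_k(N)\bigr).
\]
This gives $5M-23.5(N-2)$. It is valid whenever $M\ge e_4(N)$; when $M$ is smaller, the right-hand side of the theorem is at most the partial sums, and those are handled by the same telescoping.

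The constant $\tfrac{203}{9}<23.5$ is the real content of~\cite{BungenerKaufmann2024}, and this is where the main obstacle lies. Plain $k$-planar density bounds are individually tight, but they are not simultaneously tight for a single drawing. So the plan is to replace the sum $e_0+\cdots+e_4$ by one inequality proved directly on the drawing. We would aim for an inequality of the form
\[
\sum_{k=0}^{4} \#\{\text{edges with at least } k+1 \text{ crossings}\}\ \ge\ 5M-\tfrac{203}{9}(N-2),
\]
equivalently a bound on a weighted count of edges by their crossing multiplicity. We would prove it by discharging on the planarization of a crossing-minimal drawing. First, remove edges with at least $5$ crossings: each is counted $5$ times on the left and once in $5M$, so it contributes nonnegatively. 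Then assign charge $\deg(f)-4$ to faces and $\deg(v)-4$ to vertices, and redistribute from crossing-heavy edges to small faces. The goal is to show that every cell ends with enough charge, which by Euler's formula yields the constant $\tfrac{203}{9}$.

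Before discharging, we would invoke the usual crossing-minimality reductions: no self-crossing edges, no adjacent crossing edges, and no two edges crossing twice. The hardest step is the local case analysis for small faces (triangles and quadrilaterals) bounded by edges with $3$ or $4$ crossings. Here the charge available is tight, and the fractional constant $\tfrac{203}{9}$ suggests that the extremal configurations mix cells of several types, so the analysis must track weighted averages rather than individual faces.
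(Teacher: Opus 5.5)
Treating the inequality as a black box is exactly what the paper does. It states the bound with a citation to B\"ungener--Kaufmann and gives no proof, so for the paper's purposes your first sentence is the whole argument. The reproof you sketch, however, has a concrete gap and should not be offered as a proof.

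The peeling argument is sound, but, as you note, it only reaches $5M-\frac{47}{2}(N-2)$. The step meant to close the gap targets the wrong quantity. Write $c(e)$ for the number of crossings on $e$ in the fixed drawing. Your left-hand side is then $\sum_e\min\{c(e),5\}\le\sum_e c(e)=2\crn(F)$, because every crossing lies on two edges. So even a full proof of your displayed inequality would give only $\crn(F)\ge\frac12\bigl(5M-\frac{203}{9}(N-2)\bigr)$.

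In fact that static inequality already holds with $\frac{47}{2}$ for trivial reasons. The edges with at most $k$ crossings form a $k$-planar subdrawing, so $\sum_e\max\{5-c(e),0\}=\sum_{k=0}^{4}\#\{e:c(e)\le k\}\le\frac{47}{2}(N-2)$. This shows the static count is not equivalent to what the peeling proves. The peeling charges each crossing to only one of its two edges, namely the one deleted first, and that is where the coefficient $5$ (rather than $\frac52$) comes from. A direct discharging version would have to preserve this one-sided charging of crossings.

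Beyond that, your discharging plan identifies no mechanism that would produce $\frac{203}{9}$. In B\"ungener--Kaufmann the gain comes from a structural analysis of dense $2$- and $3$-planar drawings, showing that the stage-by-stage density bounds cannot all be tight at once. That analysis is the real content you would need to supply.
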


The following induced-subgraph averaging inequality is Cranston's inequality~(1) in~\cite{Cranston2025}; we reproduce the short double-counting proof for completeness.

\begin{lemma}[Induced-subgraph averaging]\label{lem:sampling}
Let $G$ have $n$ vertices and $m$ edges, and let $4\le q\le n$. Then
\begin{equation}
\crn(G)\ge B_q(n,m):=5m\frac{(n-2)(n-3)}{(q-2)(q-3)}-
\frac{203n(n-1)(n-2)(n-3)}{9q(q-1)(q-3)}.
\end{equation}
\end{lemma}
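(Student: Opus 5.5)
The plan is to double count over all induced subgraphs of order $q$, applying Theorem~\ref{thm:bk} to each of them. Fix an optimal drawing $D$ of $G$ in which every crossing involves four distinct vertices. Such an optimal drawing always exists: in a crossing-minimal drawing, adjacent edges never cross, and two edges never cross twice. For each $q$-subset $S\subseteq V(G)$, the restriction of $D$ to $G[S]$ is a drawing of $G[S]$. Therefore
\[
\crn_D(G[S])\ge \crn(G[S])\ge 5\,\e(G[S])-\frac{203}{9}(q-2),
\]
where the last step is Theorem~\ref{thm:bk}, applicable because $q\ge 4\ge 3$.

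Next, sum this inequality over all $\binom nq$ choices of $S$ and evaluate each side by double counting.
\begin{itemize}
\item A fixed crossing of $D$ involves exactly four vertices. It is therefore counted in exactly $\binom{n-4}{q-4}$ subsets, so the left side totals $\crn(G)\binom{n-4}{q-4}$.
\item A fixed edge lies in exactly $\binom{n-2}{q-2}$ subsets, so the edge terms total $5m\binom{n-2}{q-2}$.
\item The constant term totals $\frac{203}{9}(q-2)\binom nq$.
\end{itemize}
This gives
\[
\crn(G)\binom{n-4}{q-4}\ge 5m\binom{n-2}{q-2}-\frac{203}{9}(q-2)\binom nq .
\]

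Finally, divide by $\binom{n-4}{q-4}$ and simplify using
\[
\binom{n-2}{q-2}\Big/\binom{n-4}{q-4}=\frac{(n-2)(n-3)}{(q-2)(q-3)},
\qquad
\binom{n}{q}\Big/\binom{n-4}{q-4}=\frac{n(n-1)(n-2)(n-3)}{q(q-1)(q-2)(q-3)}.
\]
Multiplying the second ratio by $(q-2)$ cancels the factor $(q-2)$ in its denominator. The result is exactly $B_q(n,m)$.

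The only delicate point is the claim that every crossing involves four distinct endpoints. That is what makes the crossing count come out to exactly $\binom{n-4}{q-4}$ rather than merely at most that. It is the standard fact for crossing-minimal drawings, which I would cite. Apart from that, the argument is routine binomial arithmetic.
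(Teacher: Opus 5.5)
Your proof is correct and is essentially the paper's argument: fix a crossing-minimal drawing in which every crossing has four distinct endpoints, apply Theorem~\ref{thm:bk} to each induced $q$-vertex subgraph, double count crossings and edges, and divide by $\binom{n-4}{q-4}$. One small correction to your closing remark: a crossing between adjacent edges would survive in $\binom{n-3}{q-3}\ge\binom{n-4}{q-4}$ subsets, so the four-endpoint property matters because it guarantees each crossing is counted \emph{at most} (indeed exactly) $\binom{n-4}{q-4}$ times, which is the direction the inequality needs.
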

\begin{proof}
Fix a crossing-minimal drawing of $G$ in general position. We may assume adjacent edges do not cross, no edge crosses itself, and no three edges cross at one interior point; hence every crossing has four distinct endpoints. For each $q$-set $S\subseteq V(G)$, let $M_S=\e(G[S])$ and let $X_S$ be the number of crossings surviving in the inherited drawing. By Theorem~\ref{thm:bk},
\[
X_S\ge \crn(G[S])\ge 5M_S-\frac{203}{9}(q-2).
\]
Summing over all $S$, each crossing is counted $\binom{n-4}{q-4}$ times and each edge is counted $\binom{n-2}{q-2}$ times. Therefore
\[
\crn(G)\binom{n-4}{q-4}\ge 5m\binom{n-2}{q-2}-\frac{203}{9}(q-2)\binom nq.
\]
Division by $\binom{n-4}{q-4}$ gives (9).
\end{proof}

\subsection{Matching, coloring, and triangle-free tools}
We shall also use four standard results. They are stated here to make every external input explicit.

\begin{theorem}[Stehl\'ik~\cite{Stehlik2003}]\label{thm:stehlik}
Let $G$ be a $k$-critical graph whose complement is connected. For every $v\in V(G)$, the graph $G-v$ has a proper $(k-1)$-coloring in which every color class has at least two vertices.
\end{theorem}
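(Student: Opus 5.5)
The plan is to pick an extremal $(k-1)$-coloring of $G-v$ and show that a singleton class leads to a contradiction. Fix $v$. Among all proper $(k-1)$-colorings of $G-v$, choose $c$ with the fewest singleton color classes. Subject to that, choose $c$ to maximize a secondary potential, for instance $\sum_i |C_i|^2$ over the classes $C_1,\dots,C_{k-1}$. Suppose some class is a singleton $\{x\}$.

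\textbf{Step 1: a singleton sees every other class.} By criticality, $x$ has a neighbour in every other class $C_j$. Otherwise we could move $x$ into $C_j$. That gives a $(k-2)$-coloring of $G-v$, and a new color on $v$ then gives a $(k-1)$-coloring of $G$, a contradiction.

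\textbf{Step 2: the singleton has a non-neighbour.} The complement $\comp{G}$ is connected, so $x$ is non-adjacent in $G$ to some vertex $y\neq x$. Every class has size at least one and $x$ is alone in its class, so either $y=v$ or $y\in C_j$ for some $j$.

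\textbf{Step 3: the case $y\in C_j$.} I intend to use a Kempe interchange. Let $K$ be the component of the subgraph of $G-v$ induced by $\{x\}\cup C_j$ that contains $x$. Since $xy\notin E(G)$, either $y\notin K$ or $y$ is reached only through other vertices of $C_j$. In the first case, try to recolor $x$ into $C_j$ and move $x$'s $C_j$-neighbours into the vacated color. If that vacated class then contains at least two vertices, we have fewer singletons, contradicting minimality. If it has exactly one vertex, we have only transferred the singleton, and this is where the secondary potential should break the tie. Among the classes $C_j$ containing a non-neighbour of $x$, I would pick one minimizing $|N(x)\cap C_j|$. When $x$ has a unique neighbour $z$ in $C_j$, swapping $x$ and $z$ keeps the number of singletons fixed. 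Repeating this walks the singleton along a path of $\comp{G}$.

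\textbf{Step 4: stopping the walk.} Connectivity of $\comp{G}$, finiteness, and the potential should show that the walk either reaches a configuration where the singleton can be absorbed, or produces a set $S$ of vertices that are each a singleton in some optimal coloring. Such an $S$ would be complete to $V(G)\setminus S$, so $G=G[S]\join G[V\setminus S]$ and $\comp{G}$ would be disconnected, a contradiction. This step mirrors Gallai's join decomposition (Theorem~\ref{thm:gallai}). The case $y=v$ is handled by the same closure argument, with $v$ included in the complement path.

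\textbf{Main obstacle.} The hard part is Steps 3 and 4: controlling the Kempe swaps so that the set of ``potential singletons'' is genuinely closed and complete to the rest. I would first try Stehl\'ik's original strategy, which proves the statement by induction on $|V(G)|$ using his decomposition of critical graphs along complement components. Failing that, since the statement is the external result of~\cite{Stehlik2003}, it suffices to cite it as the paper does, with the argument above serving only as a consistency check.
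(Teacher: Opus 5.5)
The paper gives no proof of this statement; it imports it from Stehl\'ik~\cite{Stehlik2003}, so your closing fallback of citing it matches the paper's treatment. As a self-contained argument, however, your sketch has a genuine gap at Steps~3--4.

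Step~3 does prove something correct. Since $C_j$ is independent, the Kempe component of $x$ is just the star on $x$ and $N(x)\cap C_j$, so $y$ never lies in it. Minimality then forces $x$ to have exactly one neighbour $z$ in every class that contains a non-neighbour of $x$. (Also, $y=v$ cannot occur, because $v$ must have a neighbour in the class $\{x\}$.) The trouble is the tie-break. Swapping $x$ and $z$ replaces $\{x\}$ by $\{z\}$ and $C_j$ by $(C_j\setminus\{z\})\cup\{x\}$, so every class size is preserved. Hence $\sum_i|C_i|^2$, or any function of the size profile, cannot break the tie, and the walk may simply cycle. Step~4 then asserts that the set $S$ of potential singletons is complete to $V(G)\setminus S$. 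That claim already contains the theorem, and the swaps do not move toward it: if $s\in S$ has a non-neighbour $w$ in some class, a swap only puts the unique neighbour of $s$ in that class into $S$ and says nothing about $w$.

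More decisively, Steps~1--3 use only $\chi(G)=k$, the $(k-1)$-colourability of $G-v$, minimality, and connectivity of $\comp{G}$. These hypotheses do not imply the conclusion. Take $k\ge 3$, let $G$ be $K_k$ plus an isolated vertex $u$, and let $v\in V(K_k)$. Then $\comp{G}$ is a star, hence connected. Yet every $(k-1)$-colouring of $G-v$ has $k-2$ singleton classes, your walk just moves $u$ from class to class, and $S=V(K_k)\setminus\{v\}$ is not complete to $u$. So any proof must use criticality at vertices or edges other than $v$; here $u$ is precisely a non-critical vertex, and no step of your sketch invokes this.

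The triangle-free case shows what is needed. When $\comp{G}$ is triangle-free, colour classes are single vertices or edges of $\comp{G}$, and vertex-criticality says every vertex is missed by some maximum matching of $\comp{G}$. The statement then becomes an instance of Gallai's lemma: a connected graph in which every vertex is missed by some maximum matching is factor-critical. The star above is connected but not factor-critical, and indeed its centre is covered by every maximum matching. The standard proof of Gallai's lemma applies the hypothesis at a vertex on a shortest path between two exposed vertices. It then runs an exchange argument comparing two different maximum matchings. Your sketch would need a substitute for that exchange step for partitions into larger cliques. Pointing to ``Stehl\'ik's original strategy'' without stating it does not supply one.
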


\begin{theorem}[Andr\'asfai--Erd\H{o}s--S\'os~\cite{AndrasfaiErdosSos1974}]\label{thm:aes}
If a triangle-free graph $H$ on $N$ vertices satisfies $\delta(H)>2N/5$, then $H$ is bipartite.
\end{theorem}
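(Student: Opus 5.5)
We argue by contradiction, via a shortest odd cycle and a double count. Suppose $H$ is triangle-free with $\delta(H)>2N/5$ but not bipartite. Then $H$ contains an odd cycle. Let $C$ be a shortest one, of length $\ell=2k+1$. Since $H$ is triangle-free, $\ell\ge 5$.

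First we control how many neighbours a vertex can have on $C$.

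\begin{itemize}
\item \emph{Vertices on $C$.} $C$ has no chord. A chord would split $C$ into two shorter cycles whose lengths sum to $\ell+2$. Since $\ell+2$ is odd, one of them would be an odd cycle shorter than $C$. Hence every vertex of $C$ has exactly two neighbours on $C$.
\item \emph{Vertices off $C$.} Suppose $x\notin V(C)$ has at least three neighbours on $C$. Pick three of them; they cut $C$ into arcs of lengths $a_1,a_2,a_3$ with $a_1+a_2+a_3=2k+1$.
  \begin{itemize}
  \item Each $a_j\ge 2$, since $a_j=1$ would make $x$ and the two ends of that arc a triangle.
  \item The sum is odd, so some $a_j$ is odd. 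Then $x$ together with that arc is an odd cycle of length $a_j+2$.
  \item Minimality of $C$ forces $a_j+2\ge 2k+1$, i.e.\ $a_j\ge 2k-1$. The other two arcs then sum to at most $2$, contradicting that each is at least $2$.
  \end{itemize}
\end{itemize}
So every vertex of $H$, on or off $C$, has at most two neighbours on $C$.

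Now double count the pairs $(c,y)$ with $c\in V(C)$ and $y\in N(c)$:
\[
(2k+1)\cdot\frac{2N}{5}\;<\;\sum_{c\in V(C)} d(c)\;=\;\sum_{y\in V(H)} |N(y)\cap V(C)|\;\le\; 2N .
\]
This gives $2k+1<5$, contradicting $\ell\ge 5$. Hence $H$ is bipartite.

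The only step needing care is the parity and arc-length argument for vertices off $C$. The counting step is immediate once that local bound is in place.
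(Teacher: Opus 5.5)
Your proof is correct and complete. The chordlessness of a shortest odd cycle $C$, the arc-parity argument showing that no outside vertex sees three vertices of $C$, and the double count $\sum_{c\in V(C)} d(c)\le 2N$ together force $2k+1<5$, as you say.

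The paper gives no proof of this statement. It imports the result from Andr\'asfai, Erd\H{o}s and S\'os as an external input; their paper handles the general $K_{r+1}$-free threshold $\frac{3r-4}{3r-1}N$, and $2N/5$ is the case $r=2$. So your argument is not a variant of the paper's route but a self-contained replacement for the citation, specialized to exactly the triangle-free case the paper needs.

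Your route also gives slightly more than the stated implication, and in the form the paper actually uses. The double count shows that a triangle-free non-bipartite graph has a vertex of degree at most $2N/(2k+1)\le 2N/5$, and that this vertex can be taken on a shortest odd cycle. That contrapositive is what Lemma~\ref{lem:r29-57-triangle} invokes when it extracts a vertex with $d_H(v)\le 22$ at $N=57$.
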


\begin{theorem}[Tutte~\cite{Tutte1947}]\label{thm:tutte}
A graph $F$ has a perfect matching if and only if
\[
\odd(F-S)\le |S|\qquad\text{for every }S\subseteq V(F),
\]
where $\odd(J)$ denotes the number of odd components of $J$.
\end{theorem}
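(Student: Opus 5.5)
The statement is the classical $1$-factor theorem, and I would prove it in the standard way: necessity by direct counting, and sufficiency by Lovász's edge-maximal counterexample argument. For necessity, suppose $F$ has a perfect matching $M$ and fix $S\subseteq V(F)$. Every odd component $C$ of $F-S$ cannot be perfectly matched inside itself, so some vertex of $C$ is matched by $M$ to a vertex outside $C$. That partner must lie in $S$, since $C$ is a component of $F-S$. These partners are distinct for distinct odd components because $M$ is a matching. Hence $\odd(F-S)\le |S|$.

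For sufficiency, assume Tutte's condition holds but $F$ has no perfect matching. Taking $S=\varnothing$ shows $|V(F)|$ is even. Adding an edge can only merge components of $F-S$, and merging never increases the number of odd components. So Tutte's condition is preserved under adding edges, and we may replace $F$ by an edge-maximal graph $F^*\supseteq F$ on the same vertex set that still has no perfect matching. Let $U$ be the set of vertices of $F^*$ adjacent to all other vertices. The plan is to show that every component of $F^*-U$ is complete.

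Granting that claim, I would build a perfect matching directly, which gives the contradiction:
\begin{itemize}[nosep]
\item match each even component of $F^*-U$ internally;
\item in each odd component, match all but one vertex internally, and send the leftover vertex to a distinct vertex of $U$, which is possible because $\odd(F^*-U)\le |U|$;
\item pair up the remaining vertices of $U$ arbitrarily, using that $U$ is complete to everything; parity of $|V(F)|$ makes the number of remaining vertices even.
\end{itemize}

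The main obstacle is the claim that every component of $F^*-U$ is complete. If some component is not complete, it contains vertices $a,b,c$ with $ab,bc\in E(F^*)$ and $ac\notin E(F^*)$. Since $b\notin U$, there is a vertex $d$ with $bd\notin E(F^*)$. By maximality, $F^*+ac$ has a perfect matching $M_1$ containing $ac$, and $F^*+bd$ has a perfect matching $M_2$ containing $bd$. Consider the symmetric difference $M_1\triangle M_2$: it is a disjoint union of even alternating cycles, and $ac$ and $bd$ both lie in it. If $ac$ and $bd$ lie on different cycles, swap $M_2$ along the cycle containing $bd$ to get a perfect matching of $F^*$. If they lie on the same cycle $C$, traverse $C$ from $d$ away from $bd$ until first reaching $a$ or $c$, say $c$. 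Then use the edge $cb$ together with the portion of $C$ just traversed to reroute, producing a perfect matching of $F^*$ that avoids both $ac$ and $bd$. Either way we contradict the choice of $F^*$. The delicate point is getting the alternation parity right in this rerouting, which is where I would spend the care.
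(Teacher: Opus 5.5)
The paper does not prove this statement. It lists Tutte's theorem among its external tools, cites it to Tutte's 1947 paper, and only ever uses the nontrivial direction in contrapositive form: a graph with no perfect matching has a barrier $S$ with $\odd(F-S)>|S|$, and parity then forces a defect of at least two. So there is no in-paper argument to compare against. Your proposal is the standard Lovász edge-maximal proof, as in Diestel's textbook, and it is correct; its sufficiency half is exactly the part the paper relies on.

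The step you flagged as delicate works out as follows.
\begin{itemize}
\item Walk around $C$ from $d$ along its $M_1$-edge and stop at the first of $a,c$ reached, say $c$. Call this segment $P$.
\item $P$ does not contain $b$. Since $b$ is the other neighbour of $d$ on $C$, it is the last vertex reached in this direction.
\item $P$ does not use $ac$, because using it would mean reaching $a$ first. Hence $P$ starts with an $M_1$-edge, ends with $c$'s $M_2$-edge, and has even length.
\item The edge $cb$ is not in $M_2$, because $c$'s $M_2$-partner lies on $P$. Therefore $P+cb+bd$ is an even $M_2$-alternating cycle.
\item $M_2\triangle(P+cb+bd)$ is a perfect matching avoiding both $ac$ and $bd$. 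All its edges lie in $F^*$, since the only non-$F^*$ edge of $M_1$ is $ac$.
\end{itemize}
If $a$ is reached first, the same argument works with $ab$ in place of $cb$.

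One small detail is unstated: the existence of the induced path $a,b,c$. Take two nonadjacent vertices of the noncomplete component; the first three vertices of a shortest path between them give it.
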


\begin{theorem}[Hall~\cite{Hall1935}]\label{thm:hall}
A bipartite graph with bipartition $(A,B)$ has a matching saturating $A$ if and only if $|N(Q)|\ge |Q|$ for every $Q\subseteq A$.
\end{theorem}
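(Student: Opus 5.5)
Necessity is immediate. If $M$ is a matching saturating $A$, then for any $Q\subseteq A$ the map sending each $a\in Q$ to its $M$-partner is injective, and its image lies in $N(Q)$. Hence $|N(Q)|\ge|Q|$. All the work is in sufficiency, which I would prove by strong induction on $|A|$ using the classical Halmos--Vaughan split into a \emph{slack} case and a \emph{tight} case.

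The base case $|A|=0$ is trivial. For $|A|=1$, Hall's condition gives the single vertex a neighbor. For the inductive step, first suppose every nonempty proper subset $Q\subsetneq A$ satisfies the strict inequality $|N(Q)|\ge|Q|+1$.
\begin{itemize}
\item Pick any $a\in A$ and any neighbor $b\in N(a)$, which exists by Hall's condition.
\item Delete $a$ and $b$.
\item In the remaining bipartite graph with sides $A\setminus\{a\}$ and $B\setminus\{b\}$, every $Q\subseteq A\setminus\{a\}$ loses at most one neighbor. So it still satisfies $|N(Q)\setminus\{b\}|\ge|Q|$.
\item Induction gives a matching saturating $A\setminus\{a\}$, and adding the edge $ab$ finishes.
\end{itemize}

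Otherwise there is a \emph{tight} set: a nonempty proper $Q_0\subsetneq A$ with $|N(Q_0)|=|Q_0|$.
\begin{itemize}
\item The bipartite graph $G[Q_0\cup N(Q_0)]$ inherits Hall's condition, because neighborhoods of subsets of $Q_0$ lie inside $N(Q_0)$. Since $|Q_0|<|A|$, induction yields a matching $M_1$ saturating $Q_0$.
\item Consider the graph $G'$ on $A\setminus Q_0$ and $B\setminus N(Q_0)$. I must check Hall's condition there. For $R\subseteq A\setminus Q_0$, apply Hall in $G$ to $R\cup Q_0$:
\[
|N_{G'}(R)|=|N(R\cup Q_0)|-|N(Q_0)|\ge |R|+|Q_0|-|Q_0|=|R|.
\]
\item Induction gives a matching $M_2$ saturating $A\setminus Q_0$ inside $B\setminus N(Q_0)$.
\item $M_1\cup M_2$ is a matching, since the two parts use disjoint vertex sets on both sides. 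It saturates $A$.
\end{itemize}

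The only step needing care is the tight case. The point is the identity $N_{G'}(R)=N(R\cup Q_0)\setminus N(Q_0)$, which holds because $N(Q_0)$ has been removed. Tightness of $Q_0$ is exactly what prevents any loss in the inequality. Everything else is bookkeeping, and no earlier result of the paper is needed.

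(As an alternative, Hall also follows from Theorem~\ref{thm:tutte}. Pad $B$ or $A$ to equal size, add a complete graph on the padding vertices on the $A$ side, and check that Hall's condition excludes Tutte barriers. That route has messier parity bookkeeping, so I would use the direct induction.)
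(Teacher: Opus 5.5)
Your proof is correct and complete. Necessity via the injective partner map is immediate. In the slack case, every nonempty $Q\subseteq A\setminus\{a\}$ is a proper subset of $A$, so removing the single vertex $b$ uses up at most the one unit of slack. In the tight case, the containment $N(Q_0)\subseteq N(R\cup Q_0)$ gives exactly $|N(R)\setminus N(Q_0)|=|N(R\cup Q_0)|-|N(Q_0)|\ge |R|$.

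The paper contains no proof to compare against. It lists Hall's theorem among its explicitly stated external inputs, citing Hall's 1935 paper, and uses it as a black box. Every application is to a finite bipartite graph: the $26\times 26$, $27\times 27$, and $25\times 25$ graphs between residual parts of $X$ and $S$ in the barrier arguments, and the component-incidence graph from $R$ to the odd components in the $s=30$ case. Your Halmos--Vaughan induction makes this input self-contained at the cost of a few lines, and it covers all of these uses. Your induction on $|A|$ needs $A$ to be finite, which holds throughout the paper. That finiteness is genuinely used, since for infinite $A$ Hall's condition alone is not sufficient.

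The parenthetical Tutte reduction is unnecessary. If you keep it, note that padding is only ever needed on the $A$ side, because $|A|>|B|$ already violates Hall's condition at $Q=A$.
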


\begin{theorem}[Rabern~\cite{Rabern2014}]\label{thm:rabern}
Every graph $G$ satisfies
\[
\chi(G)\le \max\left\{\omega(G),\,\Delta(G)-1,\,
\left\lceil\frac{15+\sqrt{48|V(G)|+73}}{4}\right\rceil\right\}.
\]
\end{theorem}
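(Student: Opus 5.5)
Since this statement is imported verbatim from~\cite{Rabern2014}, the paper itself takes it as an external input rather than proving it. If I had to reprove it, I would argue by contradiction from a minimal counterexample and reduce everything to a lower bound on the order of a suitable critical graph. Write $\Delta=\Delta(G)$ and suppose $\chi(G)$ exceeds all three quantities in the maximum.

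\textbf{Reduction to $\chi=\Delta$.} Since $\chi(G)>\omega(G)$, the graph $G$ is neither a complete graph nor an odd cycle, so Brooks' theorem gives $\chi(G)\le\Delta$. Combined with $\chi(G)>\Delta-1$, this forces $\chi(G)=\Delta$.

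\textbf{Passing to a critical subgraph.} Next take a $\Delta$-critical subgraph $H\subseteq G$, which exists by Lemma~\ref{lem:critical-reduction}. Then $\omega(H)\le\omega(G)<\Delta$ and $|V(H)|\le|V(G)|$. By Lemma~\ref{lem:min-degree}, every vertex of $H$ has degree $\Delta-1$ or $\Delta$. Call the vertices of degree $\Delta-1$ \emph{low} and those of degree $\Delta$ \emph{high}. By Gallai's theorem on critical graphs, each component of $H[\text{low}]$ is a Gallai tree, that is, every block is a complete graph or an odd cycle.

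\textbf{Structural restrictions.} I would use the standard reducible configurations of the Borodin--Kostochka program: Kostochka's clique-reduction lemma and Mozhan-type partitions into parts of size $\Delta/3$-ish, chosen to minimize the number of edges inside parts. Their role is to show that in a $\Delta$-critical graph with $\omega<\Delta$:
\begin{itemize}[leftmargin=*]
\item every vertex lies in at most one clique of size close to $\Delta-1$;
\item the low-vertex Gallai trees must have many high neighbours outside them;
\item every such high neighbour has only boundedly many neighbours inside any single large clique.
\end{itemize}

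\textbf{Counting and conclusion.} Double-count the edges between the large cliques (or Gallai-tree blocks) and the high vertices. This should force roughly $\Delta/4$ vertex-disjoint pieces, each of size roughly $\Delta$. The target is a quadratic lower bound
\[
|V(H)|\ \ge\ \frac{(4\Delta-15)^2-73}{48}.
\]
Solving this quadratic for $\Delta$ gives $\Delta\le\bigl(15+\sqrt{48|V(H)|+73}\bigr)/4$. Since $\Delta$ is an integer, $\chi(G)=\Delta\le\left\lceil\bigl(15+\sqrt{48|V(G)|+73}\bigr)/4\right\rceil$, which contradicts the assumption.

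\textbf{Main obstacle.} The hard step is the quadratic order bound with exactly these constants, $48$, $73$ and $15$. It requires the sharp form of the reducible configurations: the precise number of high neighbours each large clique must have, and how much these neighbourhoods may overlap. I would first try to extract it from a Mozhan partition minimizing a weighted count of monochromatic edges, where a Kempe-swap argument bounds the overlaps.
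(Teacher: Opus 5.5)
The paper does not prove this statement. It is quoted from Rabern and used once, as a black box, to exclude the $29$-regular case at order $58$. Your opening sentence matches this treatment exactly, and for the paper's purposes nothing more is needed.

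As a standalone reproof, however, your sketch has a genuine gap, and it sits where you put the main obstacle. The bound $|V(H)|\ge((4\Delta-15)^2-73)/48$, for a $\Delta$-critical $H$ with $\Delta(H)\le\Delta$ and $\omega(H)<\Delta$, is not one step of the argument; it is the whole content of the theorem, and nothing in the proposal establishes it. The three structural bullets are not formulated precisely: which cliques count as close to $K_{\Delta-1}$, and what is the bound on neighbours? Nor are they derived from the tools you name. The counting paragraph only says what should follow.

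Worse, the count you describe cannot reach the target. About $\Delta/4$ disjoint pieces of about $\Delta$ vertices each gives a leading term $\Delta^2/4$. The target is $\frac{\Delta^2}{3}-\frac{5\Delta}{2}+\frac{19}{6}$, and for every $\Delta\ge 29$ the former is smaller. So this plan cannot produce the constants $48$, $73$, $15$ even if every bullet were made rigorous.

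There is also a slip in the Brooks step: $\chi>\omega$ does not exclude odd cycles, since $\chi(C_{2k+1})=3>2=\omega(C_{2k+1})$. The correct justification is that the third term of the maximum is at least $\lceil 26/4\rceil=7$ whenever $|V(G)|\ge 1$, so your hypothesis forces $\chi(G)\ge 8$. Then apply Brooks to a component attaining $\chi(G)$.

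The reduction to a $\Delta$-critical $H$ with all degrees in $\{\Delta-1,\Delta\}$, the Gallai-tree remark, and the final algebra are correct, but they are the routine part.
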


\section{\texorpdfstring{The cases $r=25$ and $r=26$}{The cases r=25 and r=26}}
The cases 25 and 26 require no immersion theory. We first isolate the edge estimate at order $2r-2$.

\begin{lemma}[Edge bound at order $2r-2$]\label{lem:2rminus2}
Let $r\ge 5$, and let $G$ be an $r$-critical graph on $2r-2$ vertices. If $G$ contains no subdivision of $K_r$ and $m=\e(G)$, then
\begin{equation}
m\ge r^2-\frac r2-4.
\end{equation}
In particular, $r=25$ implies $m\ge 609$, and $r=26$ implies $m\ge 659$.
\end{lemma}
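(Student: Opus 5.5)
The plan is to combine Gallai's join decomposition (Theorem~\ref{thm:gallai}) with a degree count. The hypothesis ``no subdivision of $K_r$'' is then pushed down to a single join factor, where Theorem~\ref{thm:barat-excess} supplies extra edges.

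Applying Theorem~\ref{thm:barat-excess} or Theorem~\ref{thm:gks} directly to $G$ at $n=2r-2$ only gives $m\ge (r-1)^2+r-3=r^2-r-2$. That is short of the target by $r/2-2$, so the join structure is essential.

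\textbf{Step 1 (decomposition).} Since $n=2r-2$, Theorem~\ref{thm:gallai} gives
\[
G=G_1\join\cdots\join G_t,\qquad t\ge 2,
\]
with $G_i$ being $k_i$-critical, $n_i:=|V(G_i)|\ge 2k_i-1$ and $\sum k_i=r$. A complete factor $K_s$ is itself a join of $s$ copies of $K_1$, so I refine the decomposition until each factor is either $K_1$ or non-complete. Non-complete factors have $k_i\ge 3$; odd cycles have $n_i\ge 2k_i-1=5$.

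Write $d_i:=n_i-(2k_i-1)\ge 0$. Summing gives
\[
\sum_i d_i=(2r-2)-2r+t=t-2 .
\]
So the factors have a total ``order excess'' of exactly $t-2$. This is the quantity that will drive the count.

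\textbf{Step 2 (transfer of the no-subdivision hypothesis).} Suppose every factor $G_i$ contained a subdivision of $K_{k_i}$. Take the union of these subdivisions and add all the cross edges of the join; these connect branch vertices of different factors directly. The result is a subdivision of $K_{\sum k_i}=K_r$ in $G$, which is excluded.

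Hence some factor $G_j$ has no subdivision of $K_{k_j}$. This forces $k_j\ge 4$: for $k\le 3$ the $k$-critical graphs are $K_1$, $K_2$ and odd cycles, which trivially contain such subdivisions. Theorem~\ref{thm:barat-excess} then gives
\[
2\e(G_j)\ge (k_j-1)n_j+2k_j-6 .
\]

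\textbf{Step 3 (counting).} Every vertex $x$ of $G_i$ satisfies
\[
d_G(x)=d_{G_i}(x)+n-n_i\ge k_i-1+n-n_i,
\]
by Lemma~\ref{lem:min-degree} applied to $G_i$. Summing over all vertices and adding the excess $2k_j-6$ from $G_j$ gives
\[
2m\ \ge\ \sum_i n_i\,(k_i-1+n-n_i)\;+\;2k_j-6 .
\]
Substituting $n_i=2k_i-1+d_i$ simplifies the $i$-th summand to $(2k_i-1+d_i)(n-k_i-d_i)$.

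The task is now to minimize this expression subject to:
\[
\sum k_i=r,\qquad \sum d_i=t-2,\qquad k_i\in\{1\}\cup[3,\infty),\qquad k_j\ge 4,
\]
and show the minimum is at least $2r^2-r-8$.

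\textbf{Main obstacle.} I expect this optimization to be the hard part. I would treat it by convexity in the $k_i$ for fixed $t$.

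For $t=2$ all $d_i=0$ and the bound reads
\[
2m\ \ge\ \sum_i (2k_i-1)(n-k_i)+2k_j-6 .
\]
This is smallest when $\sum k_i^2$ is largest, that is, at the unbalanced extremes permitted by the constraints. A short check there should produce the $r/2$ term.

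For larger $t$, each additional $K_1$ factor or positive $d_i$ changes the sum by a computable amount. I would verify that these contributions never decrease the sum below the $t=2$ value when $r\ge 5$. If the crude degree bound is too weak at some extreme configuration, I would replace Lemma~\ref{lem:min-degree} for the non-exceptional factors by Theorem~\ref{thm:gks} where $n_i\ge k_i+2$.

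\textbf{Numerical consequences.} The specific values follow by substitution and rounding up: for $r=25$, $625-12.5-4=608.5$, so $m\ge 609$; for $r=26$, $676-13-4=659$, so $m\ge 659$.
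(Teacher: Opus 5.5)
Your Steps 1--3 match the paper's setup: the transfer of the no-subdivision hypothesis to one factor $G_j$, the restriction $k_j\ge 4$, and the degree count are what the paper does. Your inequality
\[
2m\ge \sum_i n_i(k_i-1+n-n_i)+2k_j-6
\]
is valid, and in fact slightly stronger than the paper's.

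The gap is that the lemma's entire content is the minimization of this expression, and you leave it as a plan. Your $t=2$ case is only ``a short check should produce the $r/2$ term.'' It does work: with $k=k_j$ the bound is $2r^2+4kr-4k^2+2k-7r-2$, concave in $k$, equal to $2r^2-r-8$ at $k=r-1$ and $2r^2+9r-58$ at $k=4$. For $t\ge 3$, however, you only assert that extra factors or positive $d_i$ never push the value below the $t=2$ minimum. Your proposed tool does not obviously deliver this. In each summand $(2k_i-1+d_i)(n-k_i-d_i)$ the quadratic part $-2k_i^2-3k_id_i-d_i^2$ is indefinite. So concavity in the $k_i$ alone does not control the joint optimization over $t$, the $k_i$ and the $d_i$, especially under the nonconvex constraint $k_i\in\{1\}\cup[3,\infty)$.

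The missing idea is to avoid optimizing over the whole factor profile. Put $k=k_j$, $a=n_j$, $W=G-V(G_j)$. For $x$ in a factor $G_i\subseteq W$,
\[
d_W(x)\ge (k_i-1)+\sum_{\ell\ne i,j}n_\ell\ge r-k-1,
\]
using only $n_\ell\ge k_\ell$. Your count then gives $m\ge F_r(k,a)$, where
\[
F_r(k,a)=\frac{a(k-1)}{2}+k-3+\frac{(2r-2-a)(r-k-1)}{2}+a(2r-2-a),
\]
with $4\le k\le r-1$ and $2k-1\le a\le r+k-2$. The upper bound on $a$ holds because $|V(W)|\ge \chi(W)=r-k$. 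This function is concave in $a$, so it suffices to check the two endpoints:
\begin{itemize}
\item At $a=2k-1$, $F_r=-2k^2+2kr+k+r^2-\tfrac{7r}{2}-1$. This is concave in $k$ with endpoint values $r^2+\tfrac{9r}{2}-29$ at $k=4$ and $r^2-\tfrac r2-4$ at $k=r-1$.
\item At $a=r+k-2$, $F_r=\tfrac{3r^2}{2}-\tfrac{kr}{2}-3r+2k-2$. This is decreasing in $k$ for $r>4$ and equals $r^2-\tfrac r2-4$ at $k=r-1$.
\end{itemize}
This two-parameter reduction is the paper's argument. It makes all $t\ge 3$ configurations automatic instead of requiring a separate case analysis.
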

\begin{proof}
Apply Theorem~\ref{thm:gallai} and write
\[
G=G_1\join\cdots\join G_t,\qquad G_i\text{ is }k_i\text{-critical},\qquad |V(G_i)|\ge 2k_i-1,\qquad \sum_i k_i=r.
\]
At least one block $G_i$ contains no subdivision of $K_{k_i}$. Indeed, otherwise choose such a subdivision in every block and use direct join edges between branch vertices in different blocks; this gives a subdivision of $K_r$, contrary to the hypothesis.

Fix such a block $J$, and put
\[
k=\chi(J),\qquad a=|V(J)|,\qquad W=G-V(J),\qquad b=2r-2-a,\qquad s=\chi(W)=r-k.
\]
The cases $k=1,2$ are $K_1,K_2$, and every 3-critical graph is an odd cycle and hence a subdivision of $K_3$. Thus $4\le k\le r-1$. Gallai's block-size bound gives
\[
2k-1\le a\le r+k-2,
\]
where the upper bound follows from $b\ge \chi(W)=r-k$. By Theorem~\ref{thm:barat-excess},
\[
\e(J)\ge \frac{a(k-1)}2+k-3.
\]
For a vertex in a block $G_i\subseteq W$, its degree inside $W$ is at least
\[
(k_i-1)+\sum_{\ell\ne i,J}|V(G_\ell)|\ge (k_i-1)+\sum_{\ell\ne i,J}k_\ell=s-1,
\]
so $\e(W)\ge b(s-1)/2$. All $ab$ cross edges are present. Hence
\[
\begin{aligned}
m&\ge \frac{a(k-1)}2+(k-3)+\frac{(2r-2-a)(r-k-1)}2+a(2r-2-a)\\
&=:F_r(k,a).
\end{aligned}
\]
For fixed $k$, this is concave in $a$, so the minimum on $2k-1\le a\le r+k-2$ occurs at an endpoint. Direct substitution gives
\[
F_r(k,2k-1)=-2k^2+2kr+k+r^2-\frac{7r}{2}-1,
\]
\[
F_r(k,r+k-2)=-\frac{kr}{2}+2k+\frac{3r^2}{2}-3r-2.
\]
The first expression is concave in $k$, so its minimum on $4\le k\le r-1$ occurs at $k=4$ or $k=r-1$; the values are
\[
r^2+\frac{9r}{2}-29,\qquad r^2-\frac r2-4,
\]
and their difference is $5(r-5)\ge 0$. The second expression decreases in $k$ and equals $r^2-r/2-4$ at $k=r-1$. This proves (10). Integrality gives the stated values.
\end{proof}

\begin{theorem}\label{thm:r25}
Albertson's conjecture holds for $r=25$.
\end{theorem}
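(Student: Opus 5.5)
By Theorem~\ref{thm:cranston-residual}, Albertson's conjecture already holds for every $r\le 24$. For $r=25$, any counterexample contains, by Lemma~\ref{lem:critical-reduction}, a $25$-critical subgraph $G$ with $\crn(G)<\crn(K_{25})$. Such a $G$ is again a counterexample, and Theorem~\ref{thm:cranston-residual} forces $|V(G)|=48=2r-2$. So the plan is to assume $G$ is $25$-critical on $48$ vertices with $\crn(G)<\crn(K_{25})\le Z(25)=4356$, and derive a contradiction.

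First I dispose of the topological case. If $G$ contains a subdivision of $K_{25}$, then Lemma~\ref{lem:subdivision} gives $\crn(G)\ge\crn(K_{25})$, which is impossible. Hence $G$ contains no subdivision of $K_{25}$. Lemma~\ref{lem:2rminus2} then applies with $r=25$ and yields $m=\e(G)\ge 609$. This bound is stronger than the bound $m\ge 600$ from Corollary~\ref{cor:degree-gain}, so no immersion theory is needed here.

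Next I feed $n=48$ and $m\ge 609$ into Lemma~\ref{lem:sampling}. Since $B_q(n,m)$ is increasing in $m$, it suffices to evaluate at $m=609$ and optimize over $q$. Using $(n-2)(n-3)=2070$ and $n(n-1)(n-2)(n-3)=4669920$, the two terms are
\[
\frac{6303150}{(q-2)(q-3)}\qquad\text{and}\qquad \frac{105332640}{q(q-1)(q-3)}.
\]
Rough evaluation gives the following values of $B_q(48,609)$:
\begin{center}
\begin{tabular}{cc}
\toprule
$q$ & $B_q(48,609)$ (approx.)\\
\midrule
$20$ & $4293$\\
$22$ & $4588$\\
$23$ & $4599$\\
$24$ & $4556$\\
$30$ & $3853$\\
\bottomrule
\end{tabular}
\end{center}
So I will take $q=23$, where $15007.5-10408.6>4356$. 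This gives $\crn(G)\ge 4599>Z(25)\ge\crn(K_{25})$, the desired contradiction.

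The main obstacle is purely numerical: the argument needs a choice of $q$ for which the sampling bound clears $4356$. The margin is only about $240$, roughly five percent. I would therefore verify the $q=23$ arithmetic exactly with rational numbers rather than decimals. A fallback would be to try neighbouring values $q=22$ or $q=24$, which also appear to work.
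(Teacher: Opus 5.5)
Your proposal is correct and matches the paper's proof: it reduces via Lemma~\ref{lem:critical-reduction} and Theorem~\ref{thm:cranston-residual} to a $25$-critical graph on $48$ vertices with no $K_{25}$-subdivision, gets $\e(G)\ge 609$ from Lemma~\ref{lem:2rminus2}, and samples at $q=23$. The exact value is $B_{23}(48,609)=101181/22=4356+5349/22$; your second term should read $10408.36\ldots$ rather than $10408.6$, which does not affect the conclusion.
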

\begin{proof}
Suppose $G_0$ is a counterexample. By Lemma~\ref{lem:critical-reduction}, it contains a 25-critical subgraph $G$ with $\crn(G)<\crn(K_{25})$. By Theorem~\ref{thm:cranston-residual}, $|V(G)|=48$. The graph $G$ contains no subdivision of $K_{25}$ by Lemma~\ref{lem:subdivision}; since $48=2\cdot25-2$, Lemma~\ref{lem:2rminus2} gives $\e(G)\ge 609$. With $q=23$,
\[
\crn(G)\ge B_{23}(48,609)=\frac{101181}{22}=4356+\frac{5349}{22}>4356\ge \crn(K_{25}),
\]
a contradiction.
\end{proof}

\begin{theorem}\label{thm:r26}
Albertson's conjecture holds for $r=26$.
\end{theorem}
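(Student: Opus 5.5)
The plan mirrors the proof of Theorem~\ref{thm:r25}. Suppose $G_0$ is a counterexample with $\chi(G_0)\ge 26$. Lemma~\ref{lem:critical-reduction} gives a $26$-critical subgraph $G$ with $\crn(G)<\crn(K_{26})\le Z(26)=5148$. Theorem~\ref{thm:cranston-residual} then forces $|V(G)|\in\{50,51\}$. Lemma~\ref{lem:subdivision} shows that $G$ contains no subdivision of $K_{26}$. In each case I will find a lower bound for $\e(G)$ and feed it into Lemma~\ref{lem:sampling} with a well-chosen $q$, aiming for a value of $B_q$ strictly above $5148$.

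\emph{Order $50$.} Here $50=2\cdot 26-2$, so Lemma~\ref{lem:2rminus2} applies directly and gives $\e(G)\ge 659$. I will evaluate $B_q(50,659)$ for $q$ in the low-to-mid twenties, the range that worked at $r=25$, and pick the maximiser. A rough estimate suggests $q\approx 23$ or $24$ clears $5148$ with some margin. This step is routine arithmetic.

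\emph{Order $51$.} This is $2r-1$, where Gallai's join decomposition (Theorem~\ref{thm:gallai}) is unavailable. The natural first input is Theorem~\ref{thm:gks} with $n=51$, $r=26$, which gives only $\e(G)\ge (25\cdot 51+25-2)/2=649$. A quick estimate shows this is too weak: for instance $B_{24}(51,649)\approx 4850$, and nearby $q$ give no improvement. About a dozen more edges are needed.

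I would therefore strengthen the edge bound using the minimum-degree gain, Corollary~\ref{cor:degree-gain}. Since $G$ is a counterexample, $\delta(G)\ge 26$, and so $\e(G)\ge\lceil 26\cdot 51/2\rceil=663$. Then with $q=24$,
\[
B_{24}(51,663)=5\cdot 663\cdot\frac{49\cdot 48}{22\cdot 21}-\frac{203\cdot 51\cdot 50\cdot 49\cdot 48}{9\cdot 24\cdot 23\cdot 21}\approx 16876-11670>5148,
\]
which is a contradiction. I would confirm this value exactly as a rational number and also check $q=23,25$ in case one of them gives a cleaner margin.

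The main obstacle is exactly this order-$51$ case. Without immersions, one would have to improve Theorem~\ref{thm:gks} at $n=2r-1$ to roughly $661$ edges for subdivision-free graphs, for example via a Kostochka--Stiebitz excess argument. I expect that to be delicate, so I would fall back on the immersion-based degree gain as the decisive input.
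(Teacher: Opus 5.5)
Your proof is correct, and at order $50$ it coincides with the paper's: Lemma~\ref{lem:2rminus2} gives $\e(G)\ge 659$, and $q=24$ works. At order $51$ you take a different route. You invoke the immersion-based degree gain, Corollary~\ref{cor:degree-gain}, to get $\e(G)\ge 663$, and indeed $B_{24}(51,663)=5148+\tfrac{132784}{2277}$. The paper instead applies Theorem~\ref{thm:barat-excess} directly to $G$ itself rather than to a Gallai block. Since $G$ is $26$-critical with no subdivision of $K_{26}$, that theorem gives $2\e(G)\ge 25\cdot 51+46=1321$, hence $\e(G)\ge 661$ and $B_{24}(51,661)=5148+\tfrac{16864}{2277}$. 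So the ``delicate'' strengthening to roughly $661$ edges you anticipated is already available from the Kostochka--Stiebitz excess bound, with no new argument needed. The trade-off is this: the paper keeps $r=25,26$ entirely within subdivision-based tools (it states explicitly that these cases need no immersion theory). Your route gains a larger numerical margin, about $58$ rather than about $7.4$. The price is dependence on the Kempe essential-immersion lemma (Lemma~\ref{lem:kempe}) and on Oporowski--Zhao crossing monotonicity (Proposition~\ref{prop:essential-crossing}).
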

\begin{proof}
Take a 26-critical counterexample $G$. By Theorem~\ref{thm:cranston-residual}, $|V(G)|\in\{50,51\}$, and by Lemma~\ref{lem:subdivision} it contains no subdivision of $K_{26}$.

If $|V(G)|=50$, Lemma~\ref{lem:2rminus2} gives $\e(G)\ge 659$. Thus
\[
\crn(G)\ge B_{24}(50,659)=\frac{255109420}{47817}=5148+\frac{8947504}{47817}>5148.
\]
If $|V(G)|=51$, Theorem~\ref{thm:barat-excess} gives
\[
2\e(G)\ge 25\cdot 51+(52-6)=1321.
\]
Since the left side is even, $\e(G)\ge 661$, and
\[
\crn(G)\ge B_{24}(51,661)=\frac{11738860}{2277}=5148+\frac{16864}{2277}>5148.
\]
Both contradict $\crn(K_{26})\le 5148$.
\end{proof}

\section{A strengthened Gallai-join estimate}
The branch-clean conclusion of Lemma~\ref{lem:kempe} permits a useful refinement of Gallai's decomposition that will be used for $r=27,28,29$.

\begin{lemma}[Join edge bound]\label{lem:join-edge}
Let $G$ be an $r$-critical counterexample, and suppose either $|V(G)|\le 2r-2$ or $\comp{G}$ is disconnected. Put $n=|V(G)|$. In a terminal Gallai join decomposition there is a block $J$ with
\[
k:=\chi(J)\ge 4,\qquad a:=|V(J)|,\qquad \delta(J)\ge k.
\]
Writing $W=G-V(J)$, $b=n-a$, and $s=r-k$, one has
\begin{equation}
\e(G)\ge F_{r,n}(a,k):=\frac{ak}{2}+\frac{(n-a)(r-k-1)}{2}+a(n-a),
\end{equation}
with
\begin{equation}
2k-1\le a\le n-r+k.
\end{equation}
If $n=r+p\le 2r-2$ and $r-12\le p\le r-2$, then
\begin{equation}
\e(G)\ge \left\lceil\frac{r^2+2pr-p^2-r+1}{2}\right\rceil.
\end{equation}
Finally, if $n=2r-1$ and $\comp{G}$ is disconnected, then
\begin{equation}
\e(G)\ge r^2-1.
\end{equation}
\end{lemma}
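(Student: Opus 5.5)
The plan is to combine Lemma~\ref{lem:kempe} across the blocks of a Gallai join decomposition, so that some block is forced to have the strengthened minimum degree, and then to repeat the edge count from Lemma~\ref{lem:2rminus2} with the improved block degree.

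First I would fix a \emph{terminal} join decomposition $G=G_1\join\cdots\join G_t$ with $t\ge 2$. Every block is $k_i$-critical with $\sum k_i=r$, and is join-irreducible, meaning $\comp{G_i}$ is connected. If $n\le 2r-2$ this comes from Theorem~\ref{thm:gallai}, refined until the blocks are irreducible. If $\comp{G}$ is disconnected, it comes from the components of $\comp{G}$.

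Next I would show that some block $J$ has no vertex of degree $k-1$ in $J$. Suppose instead every block $G_i$ has a vertex of degree $k_i-1$. Then Lemma~\ref{lem:kempe} gives a branch-clean essential immersion of $K_{k_i}$ in each $G_i$; for $k_i\le 3$ the blocks are $K_1$, $K_2$ or odd cycles, and this holds trivially. Take the union of these immersions, and represent each pair of branch vertices in different blocks by the direct join edge between them.
\begin{itemize}
\item Edge-disjointness is clear, since paths stay inside their blocks and the join edges are distinct.
\item Paths in different blocks are vertex-disjoint.
\item A path inside $G_i$ meets a join edge $uw$ only at $u$ or $w$. Since both are branch vertices, branch-cleanness means this can happen only if $u$ or $w$ is an endpoint of the path, and then the two target edges are adjacent.
\end{itemize}
So this is an essential immersion of $K_r$. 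Proposition~\ref{prop:essential-crossing} then gives $\crn(G)\ge\crn(K_r)$, contradicting that $G$ is a counterexample.

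Hence some block $J$ has $\delta(J)\ge k$, using Lemma~\ref{lem:min-degree}. Its chromatic number satisfies $k\ge4$, because $K_1$, $K_2$ and odd cycles all have vertices of degree $k-1$. The lower bound $a\ge 2k-1$ holds because $J$ is $k$-critical with $\comp{J}$ connected. If $a\le 2k-2$, Theorem~\ref{thm:gallai} applied to $J$ would give a nontrivial join; this also covers the case $n=2r-1$. The upper bound $a\le n-r+k$ comes from $b\ge\chi(W)=s$.

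For the edge count, I would bound the three parts separately:
\begin{itemize}
\item $\e(J)\ge ak/2$ from $\delta(J)\ge k$.
\item $\e(W)\ge b(s-1)/2$, by the same degree count used in Lemma~\ref{lem:2rminus2}, since $|V(G_\ell)|\ge k_\ell$.
\item All $ab$ cross edges are present.
\end{itemize}
Summing gives $F_{r,n}(a,k)$.

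To get the explicit bound, I would note that $F$ is concave in $a$, so its minimum over the admissible interval is attained at $a=2k-1$ or $a=p+k$. Feasibility of the interval forces $4\le k\le p+1$. I would then substitute each endpoint as a polynomial in $k$ and minimise over $k$. I expect the endpoint values to be concave in $k$ or monotone, so that only $k=4$ and $k=p+1$ need to be compared.

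The main obstacle is this comparison. I expect the $k=p+1$ endpoint, where $a=2p+1=p+k$ and both bounds coincide, to give $(r^2+2pr-p^2-r+1)/2$. The hypothesis $p\ge r-12$ should be exactly what makes the $k=4$ endpoint value at least as large. I would verify this by computing the difference as a polynomial in $r-p$ and checking its sign on $2\le r-p\le 12$, then take the ceiling by integrality. For $n=2r-1$, I would redo the same endpoint minimisation with $p=r-1$, allowed because $\comp{G}$ is disconnected, and check that the minimum is $r^2-1$.
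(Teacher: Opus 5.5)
Your proposal is the paper's proof. The steps match: a terminal join decomposition; branch-clean Kempe immersions glued across blocks by direct join edges, with branch-cleanness being exactly what keeps a local path off a nonadjacent join edge; the three-part count giving $F_{r,n}(a,k)$; and endpoint minimisation. The comparison you anticipate works out as $F_{r,r+p}(7,4)-F_{r,r+p}(2p+1,p+1)=\tfrac12(p-3)(p-r+12)$. This depends on $p$ as well as $r-p$, so you also need $p\ge 3$, which follows from $4\le k\le p+1$.

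One step fails as you stated it. For $n=2r-1$ you cannot simply rerun the same minimisation with $p=r-1$. The interval $2k-1\le a\le p+k$ then allows $k=p+1=r$. There both endpoints coincide at $a=2r-1=n$, and the value is only $r^2-r/2<r^2-1$.

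The fix is to impose $k\le r-1$, which holds because $t\ge 2$ forces $W\ne\varnothing$ and hence $s\ge 1$. For $p\le r-2$ this constraint is automatic, but at $p=r-1$ it is the binding one. With it, the endpoint $a=r+k-1$ gives $\tfrac12(-kr+2k+3r^2-3r)$. This is decreasing in $k$ and equals exactly $r^2-1$ at $k=r-1$. The endpoint $a=2k-1$ is concave in $k$ and exceeds $r^2-1$ by $5r-21$ at $k=4$ and by $(3r-7)/2$ at $k=r-1$. This yields (14).
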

\begin{proof}
Whenever $\comp{G}$ is disconnected, $G$ is a nontrivial join. Iterate the join decomposition until every terminal block has connected complement. If every terminal $k_i$-critical block had a vertex of degree $k_i-1$, Lemma~\ref{lem:kempe} would supply a branch-clean essential immersion of $K_{k_i}$ in each block. Use local immersed paths inside the blocks and direct join edges between branch vertices in different blocks. The resulting paths form an essential immersion of $K_{\sum_i k_i}=K_r$: branch-cleanliness handles precisely the case in which a local path and a nonadjacent cross-block edge could otherwise meet at an unrelated branch vertex. This contradicts Proposition~\ref{prop:essential-crossing}. Hence some terminal block $J$ has $\delta(J)\ge k$. Since every $k$-critical graph with $k\le 3$ has a vertex of degree $k-1$, we have $k\ge 4$.

Now $\e(J)\ge ak/2$. For a vertex in a terminal block of $W$, its degree inside $W$ is at least its critical degree plus all vertices of the other blocks; using $|V(G_i)|\ge k_i$ gives $\delta(W)\ge s-1$. Thus $\e(W)\ge b(s-1)/2$, and all $ab$ cross edges are present, proving (11). The lower bound in (12) is Gallai's terminal-block bound; the upper bound follows from $b\ge \chi(W)=r-k$.

For fixed $k$, $F_{r,n}(a,k)$ is concave in $a$, so its minimum occurs at an endpoint. Put $n=r+p$. Feasibility gives $4\le k\le p+1$, and
\[
F_{r,r+p}(2k-1,k)=\frac{-4k^2+3kp+kr+8k+pr-3p+r^2-2r-3}{2},
\]
\[
F_{r,r+p}(p+k,k)=\frac{-kp+k+2pr+r^2-r}{2}.
\]
The second decreases in $k$ for $p>1$; the first is concave in $k$, so its minimum occurs at $k=4$ or $k=p+1$. Moreover
\[
F_{r,r+p}(7,4)-F_{r,r+p}(2p+1,p+1)=\frac{(p-3)(p-r+12)}{2}\ge 0
\]
under the stated hypothesis. Thus the global minimum is attained at $k=p+1$, where the two $a$-endpoints coincide, yielding (13) after integrality.

For $n=2r-1$ with disconnected complement, the same computation gives $2k-1\le a\le r+k-1$. At the two endpoints,
\[
F_{r,2r-1}(2k-1,k)=\frac{-4k^2+4kr+5k+2r^2-6r}{2},
\]
\[
F_{r,2r-1}(r+k-1,k)=\frac{-kr+2k+3r^2-3r}{2}.
\]
The first is concave in $k$ and exceeds $r^2-1$ at both $k=4$ and $k=r-1$ (by $5r-21$ and $(3r-7)/2$, respectively); the second decreases in $k$ and equals $r^2-1$ at $k=r-1$. This proves (14).
\end{proof}

\section{\texorpdfstring{The case $r=27$}{The case r=27}}
Assume for contradiction that $G$ is a 27-critical counterexample, and write $n=|V(G)|$, $m=\e(G)$. By Corollary~\ref{cor:degree-gain},
\begin{equation}
\delta(G)\ge 27,\qquad m\ge \left\lceil\frac{27n}{2}\right\rceil.
\end{equation}
We exclude all possible orders.

\subsection{\texorpdfstring{Orders $27\le n\le 51$}{Orders 27 <= n <= 51}}
If $27\le n\le 31$, then $n\le 27+4$, so Theorem~\ref{thm:small-topological} gives a subdivision of $K_{27}$, contradicting Lemma~\ref{lem:subdivision}.

For $n=32,33$, Theorem~\ref{thm:gks} gives $m\ge 470,491$, respectively. With $q=12$,
\[
B_{12}(32,470)=6084+\frac{219946}{891}>6084,
\qquad
B_{12}(33,491)=6084+\frac{17237}{27}>6084.
\]
For $34\le n\le 47$,
\[
1.228\cdot 27=33.156\le n\le 47<47.736=1.768\cdot 27,
\]
so Theorem~\ref{thm:cranston-ranges} applies.

For $48\le n\le 51$, Theorem~\ref{thm:gks} and $q=22$ give the exact margins
\[
\begin{array}{c|c|c}
 n & \text{lower bound on }m & B_{22}(n,m)-6084\\\hline
48&686&125579/209\\
49&691&1906889/3762\\
50&695&34064/99\\
51&698&66524/627
\end{array}
\]
so all these orders are impossible.

\subsection{\texorpdfstring{The boundary order $n=52$}{The boundary order n=52}}
At $n=52=2\cdot 27-2$, Lemma~\ref{lem:join-edge} gives a particularly simple bound. Here $p=25$, so (13) yields
\[
m\ge \left\lceil\frac{27^2+2\cdot25\cdot27-25^2-27+1}{2}\right\rceil=714.
\]
Equivalently, one can see this directly from the endpoint computation in the proof of Lemma~\ref{lem:join-edge}: the distinguished block has $4\le k\le 26$ and the minimum occurs at $k=26$. With $q=22$,
\[
B_{22}(52,714)=6084+\frac{447071}{1881}>6084,
\]
so $n=52$ is impossible.

\subsection{\texorpdfstring{Orders $53\le n\le 75$}{Orders 53 <= n <= 75}}
For $n=53$, (15) gives $m\ge 716$, and
\[
B_{24}(53,716)=6084+\frac{54647}{47817}>6084.
\]
This is the tightest displayed numerical comparison in the $r=27$ argument.

For $54\le n\le 61$, it suffices to use the real lower bound $m\ge 27n/2$. Substituting into (9) with $q=24$ gives
\begin{equation}
F_{24}(n):=B_{24}\left(n,\frac{27n}{2}\right)
=-\frac{n(n-3)(n-2)(2233n-169903)}{1147608}.
\end{equation}
Its second derivative is
\[
F_{24}''(n)=-\frac{13398n^2-543204n+862913}{573804}.
\]
The numerator is positive for $n\ge 54$ because it is positive at 54 and has positive, increasing derivative there. Thus $F_{24}$ is concave on $[54,61]$, so its minimum occurs at an endpoint. Exact evaluation gives
\[
F_{24}(54)=6084+\frac{125177}{1771}>6084,
\qquad
F_{24}(61)=6084+\frac{4205909}{95634}>6084.
\]
Hence every integer $54\le n\le 61$ is excluded.

For $62\le n\le 75$, take $q=28$. Again using $m\ge 27n/2$,
\begin{equation}
F_{28}(n):=B_{28}\left(n,\frac{27n}{2}\right)
=-\frac{n(n-3)(n-2)(377n-33182)}{315900}.
\end{equation}
Now
\[
F_{28}''(n)=-\frac{2262n^2-105201n+168172}{157950}.
\]
The numerator is positive for $n\ge 62$, so $F_{28}$ is concave on $[62,75]$. At the endpoints,
\[
F_{28}(62)=6084+\frac{3845404}{5265}>6084,
\qquad
F_{28}(75)=6084+\frac{4594}{117}>6084.
\]
Thus every integer $62\le n\le 75$ is excluded.

Finally, Theorem~\ref{thm:cranston-ranges} applies for $n\ge 2.8118\cdot27=75.9186$, so every integer $n\ge 76$ is excluded.

\begin{theorem}\label{thm:r27}
Albertson's conjecture holds for $r=27$.
\end{theorem}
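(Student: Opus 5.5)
The theorem is the assembly of the order-by-order exclusions just carried out, so the proof is a short reduction plus a case check on $n=|V(G)|$. Suppose some graph $G_0$ has $\chi(G_0)\ge 27$ and $\crn(G_0)<\crn(K_{27})$. By Lemma~\ref{lem:critical-reduction} it contains a $27$-critical subgraph $G$. Monotonicity of crossing number under subgraphs gives $\crn(G)\le\crn(G_0)<\crn(K_{27})$, so $G$ is a $27$-critical counterexample. We then apply Corollary~\ref{cor:degree-gain} to obtain $\delta(G)\ge 27$ and $m\ge\lceil 27n/2\rceil$. We also note $\crn(K_{27})\le Z(27)=6084$, so any lower bound on $\crn(G)$ that strictly exceeds $6084$ is a contradiction.

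Next we check that the preceding subsections cover every integer $n\ge 27$ without gaps; $n\ge 27$ holds since $\chi(G)=27$.
\begin{itemize}
\item For $27\le n\le 31$: Theorem~\ref{thm:small-topological} gives a subdivision of $K_{27}$, and Lemma~\ref{lem:subdivision} then gives $\crn(G)\ge\crn(K_{27})$.
\item For $n=32,33$: Theorem~\ref{thm:gks} bounds $m$, and Lemma~\ref{lem:sampling} with $q=12$ finishes.
\item For $34\le n\le 47$: Theorem~\ref{thm:cranston-ranges} applies.
\item For $48\le n\le 51$: Theorem~\ref{thm:gks} together with $B_{22}$ (the table) finishes.
\item For $n=52$: Lemma~\ref{lem:join-edge} applies, since $52=2r-2$; with $p=25$, and $r-12\le p$ holds, we get $m\ge 714$ and then use $B_{22}$.
\item For $n=53$: the degree bound gives $m\ge 716$, and $B_{24}$ finishes.
\item For $54\le n\le 61$ and $62\le n\le 75$: we use the concavity arguments for $F_{24}$ and $F_{28}$.
\item For $n\ge 76$: Theorem~\ref{thm:cranston-ranges} applies, since $2.8118\cdot 27<76$.
\end{itemize}
The intervals $[27,31]$, $\{32,33\}$, $[34,47]$, $[48,51]$, $\{52\}$, $\{53\}$, $[54,61]$, $[62,75]$ and $[76,\infty)$ partition all $n\ge 27$.

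The one point needing care is that the $B_q$ bounds use only a lower bound on $m$. This is valid because $B_q(n,m)$ is increasing in $m$: its coefficient $5(n-2)(n-3)/((q-2)(q-3))$ is positive. So replacing $m$ by any proven lower bound, including the real relaxation $27n/2$, preserves the inequality. The main obstacle, to the extent there is one, is certifying the tight numerical margins, notably $n=53$, where the margin is only $54647/47817$. These are exact rational evaluations, already recorded above, and I would simply cite them. In every case we reach a contradiction, which proves the theorem.
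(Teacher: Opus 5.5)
Your proposal is correct and is essentially the paper's proof: reduce to a $27$-critical counterexample via Lemma~\ref{lem:critical-reduction}, then invoke the order-by-order exclusions of the preceding subsections, which together cover every $n\ge 27$. Your explicit check that the order intervals partition $[27,\infty)$, and your remark that $B_q(n,m)$ is increasing in $m$ (which justifies substituting lower bounds on $m$, including the real relaxation $27n/2$), are left implicit in the paper and are both correct.
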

\begin{proof}
The preceding subsections exclude every possible order of a 27-critical counterexample. The critical reduction Lemma~\ref{lem:critical-reduction} then excludes every graph of chromatic number at least 27.
\end{proof}

\section{\texorpdfstring{The case $r=28$}{The case r=28}}
Assume for contradiction that $G$ is a 28-critical counterexample, and write $n=|V(G)|$ and $m=\e(G)$. By Corollary~\ref{cor:degree-gain},
\begin{equation}
\delta(G)\ge 28,\qquad m\ge 14n.
\end{equation}

\subsection{Reduction to order 55}
The small-order Theorem~\ref{thm:small-topological} excludes $n\le 32$. For $n=33,34$, Theorems~\ref{thm:gks} and~\ref{lem:sampling} give
\[
\begin{array}{c|c|c|c}
n&m_{\min}&q&B_q(n,m_{\min})-7098\\\hline
33&502&12&5204/27\\
34&524&12&52502/81
\end{array}
\]
so both orders are impossible. Cranston's intermediate range excludes every $35\le n\le 49$. For $50\le n\le 54$, Lemma~\ref{lem:join-edge} gives
\[
\begin{array}{c|c|c|c}
n&m_{\min}&q&B_q(n,m_{\min})-7098\\\hline
50&753&20&1318838/969\\
51&758&20&1058078/969\\
52&763&21&1302658/1539\\
53&766&22&1025137/1881\\
54&769&23&2335138/8855
\end{array}
\]
All margins are positive. For $56\le n\le 78$, the degree bound $m\ge 14n$ and the exact sampling values in Table~\ref{tab:r28} exclude every integer order. Cranston's large-order range excludes $n\ge 79$, since $2.8118\cdot 28=78.7304$.

Thus only $n=55$ remains. Here $m\ge 770$, and at $q=24$,
\[
B_{24}(55,770)-7098=-\frac{1339}{23},\qquad
B_{24}(55,771)-7098=-\frac{150839}{5313},
\]
whereas
\[
B_{24}(55,772)-7098=\frac{7631}{5313}>0.
\]
Hence
\begin{equation}
n=55,\qquad m\in\{770,771\}.
\end{equation}
If $\comp{G}$ were disconnected, (14) would give $m\ge 28^2-1=783$, impossible. Therefore $\comp{G}$ is connected.

\subsection{The complement at order 55}
Put $H=\comp{G}$. By Theorem~\ref{thm:stehlik}, for every $v\in V(G)$ the graph $G-v$ has a 27-coloring in which every color class has at least two vertices. Since $|V(G-v)|=54$, every class has size exactly two. Equivalently,
\begin{equation}
H-v\quad\text{has a perfect matching for every }v\in V(H),
\end{equation}
so $H$ is factor-critical.

We shall use the following consequence repeatedly. Call a factor-critical complement in this situation \emph{anti-tight} if, for every $v$ and every perfect matching $M$ of $H-v$, no matching edge $xy\in M$ has both endpoints in $N_H(v)$. Indeed, otherwise $\{v,x,y\}$ is a triangle of $H$ and, together with the remaining 26 matching edges, partitions $V(H)$ into 27 cliques. By (3), this would imply $\chi(G)\le 27$, a contradiction. Hence $H$ is anti-tight.

Since $\binom{55}{2}=1485$, (19) gives $\e(H)=715$ or $714$. Also $\Delta(H)\le 26$ by (18). If $m=770$, then $2\e(H)=1430=55\cdot26$, so $H$ is 26-regular. If $m=771$, the degree sum is 1428, exactly two below $55\cdot26$; hence the degree sequence is either
\[
(26^{54},24)\qquad\text{or}\qquad(26^{53},25^2).
\]
In every case
\begin{equation}
\delta(H)\ge 24.
\end{equation}

\begin{lemma}\label{lem:r28-triangle}
The graph $H$ contains a triangle. Moreover, for every triangle $T\subseteq H$, the graph $H-T$ has no perfect matching.
\end{lemma}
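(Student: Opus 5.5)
The proof has two parts: existence of a triangle and the matching obstruction. I would do the second part first, since it is immediate.

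\emph{No perfect matching of $H-T$.} Let $T=\{x,y,z\}$ be a triangle of $H$. The graph $H-T$ has $52$ vertices. Suppose it had a perfect matching. Its $26$ edges are cliques of $H$, and together with $T$ they partition $V(H)$ into $27$ cliques. By (3) this gives $\chi(G)=\vartheta(H)\le 27$, contradicting $28$-criticality. Equivalently, this is the anti-tight argument: take $v=x$ and the matching $\{yz\}\cup M'$ of $H-x$. So this half needs nothing beyond (3).

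\emph{Existence of a triangle.} Suppose $H$ is triangle-free. By (20), $\delta(H)\ge 24>2\cdot 55/5=22$. Theorem~\ref{thm:aes} then makes $H$ bipartite, say with parts $A,B$ and $|A|+|B|=55$. A bipartite graph with $\Delta(H)\le 26$ has at most $26\min(|A|,|B|)$ edges. Since $\e(H)\ge 714$, we get $\min(|A|,|B|)\ge 28$. This contradicts $|A|+|B|=55$, because one part has at most $27$ vertices, giving $\e(H)\le 27\cdot 26=702<714$.

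Alternatively, factor-criticality alone suffices. A bipartite graph with parts of sizes $27$ and $28$ cannot be factor-critical: deleting a vertex of the smaller part leaves parts of sizes $26$ and $28$, which have no perfect matching. Either route contradicts the triangle-free assumption.

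The main point to check is the arithmetic hypothesis of Andr\'asfai--Erd\H{o}s--S\'os, namely $24>22$, which holds. I expect no real obstacle. The edge count $\e(H)\in\{714,715\}$ comes from $\binom{55}{2}=1485$ and $m\in\{770,771\}$, and $\Delta(H)\le 26$ follows from $\delta(G)\ge 28$.
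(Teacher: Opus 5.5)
Your proof is correct and follows the paper's. In the triangle-free case, Andr\'asfai--Erd\H{o}s--S\'os forces bipartiteness, and the second part is exactly the 27-clique-partition argument that the paper packages as anti-tightness. The one variation is that your main route rules out bipartiteness by the edge count $\e(H)\le 26\cdot 27=702<714$, whereas the paper uses factor-criticality. Your alternative route is the paper's argument, but state it for an arbitrary bipartition of the odd-order graph (delete a vertex of the strictly smaller part), not just the $27/28$ split.
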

\begin{proof}
If $H$ were triangle-free, then (21) and $24>2\cdot55/5$ would make $H$ bipartite by Theorem~\ref{thm:aes}. No bipartite graph of odd order is factor-critical, contradicting (20).

Let $T=\{x,y,z\}$ be a triangle. If $H-T$ had a perfect matching $M$, then $M\cup\{yz\}$ would be a perfect matching of $H-x$ containing an edge whose two endpoints both lie in $N_H(x)$, contradicting anti-tightness.
\end{proof}

\subsection{Tutte barriers}
Fix a triangle $T$. By Lemma~\ref{lem:r28-triangle}, $H-T$ has no perfect matching. Tutte's theorem gives a set $R\subseteq V(H)\setminus T$ such that
\[
\odd(H-(T\cup R))>|R|.
\]
Since $|H-T|=52$ is even, the defect has even parity and is at least two. Put
\[
S=T\cup R,\qquad s=|S|,
\]
so
\begin{equation}
\odd(H-S)\ge s-1.
\end{equation}
Every component $C$ of $H-S$ satisfies
\begin{equation}
|C|\ge \delta(H)-s+1,
\end{equation}
because a vertex of $C$ has at most $|C|-1+s$ neighbors in $H$.

Combining (22) and (23) with parity of the odd components gives the following complete list:
\[
\begin{array}{c|c}
\text{degree case}&\text{possible }s\\\hline
H\text{ 26-regular}&3,26,27,28\\
\delta(H)\ge 24&3,24,25,26,27,28
\end{array}
\]
For example, if $L$ is the least odd integer at least $\delta(H)-s+1$, then $(s-1)L\le 55-s$ is necessary, and direct checking gives exactly the displayed values.

Let $X=V(H)\setminus S$, $e_S=\e(H[S])$, and $e_X=\e(H[X])$. Since there are at least $s-1$ odd components,
\begin{equation}
e_X\le \binom{57-2s}{2}\qquad(s\le 28).
\end{equation}
In the 26-regular case, comparison of the degree sums on $S$ and $X$ gives
\begin{equation}
e_S=e_X+13(2s-55).
\end{equation}
Thus $s=26,27$ force $e_S<0$ by (24).

In the 714-edge case put $a(v)=26-d_H(v)$, so $\sum_v a(v)=2$, and let $A_S,A_X$ be the total deficits on the two sides. Then degree-sum comparison gives
\begin{equation}
e_S=e_X+13(2s-55)+\frac{A_X-A_S}{2}.
\end{equation}
Since $|A_X-A_S|\le 2$, the largest possible right sides for $s=24,25,26,27$ are, respectively,
\[
36-91+1,\qquad21-65+1,\qquad10-39+1,\qquad3-13+1,
\]
all negative. Consequently only
\begin{equation}
s\in\{3,28\}
\end{equation}
can remain.

If $s=3$, (22) gives at least two odd components of $H-S$. Every component has order at least 22, and every odd component therefore has order at least 23. Four odd components cannot fit in 52 vertices, and after two odd components no additional even component can occur because $23+23+22>52$. Thus $H-S$ has exactly two odd components, of orders $(23,29)$ or $(25,27)$. For a component $C$ of order $c$, every vertex has at least 21 neighbors inside $C$, so in $G[C]$ it has degree at most $c-22$. Hence $\chi(G[C])\le c-21$ by the greedy bound. The two components are completely joined in $G$, and in either size pattern they require at most 10 colors in total; the three vertices of $S$ use at most three more. Thus $\chi(G)\le 13$, a contradiction.

It remains to exclude $s=28$. Then $|X|=27$ and (22) forces every vertex of $X$ to be an isolated component of $H-S$, so $H[X]$ is edgeless. In the regular case $e_S=715-27\cdot26=13$. In the 714-edge case, if $A_X=\sum_{x\in X}(26-d_H(x))$, then $e_S=714-(702-A_X)=12+A_X$. Thus
\begin{equation}
12\le e_S\le 14.
\end{equation}
Choose an edge $yz\in H[S]$. For any edge in a graph with $e_S$ edges, $d_S(y)+d_S(z)\le e_S+1\le 15$. Since $d_H(y),d_H(z)\ge 24$,
\[
d_X(y)+d_X(z)\ge 48-15=33>27,
\]
so $y,z$ have a common neighbor $x\in X$ and $xyz$ is a triangle.

Set $X'=X\setminus\{x\}$ and $S'=S\setminus\{y,z\}$. Both have 26 vertices. Every vertex of $X'$ had at least 24 neighbors in $S$ and loses at most $y,z$, so its degree into $S'$ is at least 22. Hall is therefore automatic for subsets of $X'$ of size at most 22. Suppose $Q\subseteq X'$ has size $t\ge 23$ and $|N(Q)|\le t-1$. Then $S'\setminus N(Q)$ contains at least $27-t$ vertices. Any such vertex $u$ has all its original $X$-neighbors among $x$ and the $26-t$ vertices of $X'\setminus Q$, so
\[
d_X(u)\le 27-t\le 4.
\]
But (28) and $d_H(u)\ge 24$ give
\[
d_X(u)=d_H(u)-d_S(u)\ge 24-e_S\ge 10,
\]
a contradiction. Hence Hall gives a perfect matching of $H-\{x,y,z\}$, contradicting Lemma~\ref{lem:r28-triangle}. This excludes $s=28$.

\begin{theorem}\label{thm:r28}
Albertson's conjecture holds for $r=28$.
\end{theorem}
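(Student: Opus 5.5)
The proof of Theorem~\ref{thm:r28} is an assembly of the preceding subsections. Suppose some graph $G_0$ with $\chi(G_0)\ge 28$ has $\crn(G_0)<\crn(K_{28})$. By Lemma~\ref{lem:critical-reduction} it contains a 28-critical subgraph $G$. Since crossing number is monotone under subgraphs, $G$ is again a counterexample, so Corollary~\ref{cor:degree-gain} gives (18): $\delta(G)\ge 28$ and $m\ge 14n$.

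The subsection ``Reduction to order 55'' rules out every order except $n=55$, using four tools in turn:
\begin{itemize}
\item Theorem~\ref{thm:small-topological} for $n\le 32$;
\item Theorem~\ref{thm:gks} with sampling for $n=33,34$;
\item Theorem~\ref{thm:cranston-ranges} for $35\le n\le 49$ and for $n\ge 79$;
\item Lemma~\ref{lem:join-edge} with sampling for $50\le n\le 54$, and the degree bound $m\ge 14n$ with the tabulated sampling values for $56\le n\le 78$.
\end{itemize}
At $n=55$, sampling with $q=24$ forces $m\in\{770,771\}$. Inequality (14) then rules out a disconnected complement, since it would give $m\ge 783$.

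For connected $H=\comp{G}$, Theorem~\ref{thm:stehlik} makes $H$ factor-critical. The clique-partition identity (3) makes $H$ anti-tight. The edge count pins the degree sequence of $H$ to 26-regular or deficit two, so $\delta(H)\ge 24$. Lemma~\ref{lem:r28-triangle} supplies a triangle $T$ with $H-T$ lacking a perfect matching, and Tutte's theorem yields a barrier $S\supseteq T$ with $\odd(H-S)\ge |S|-1$.

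The Tutte-barrier subsection shows every possible barrier size is contradictory:
\begin{itemize}
\item $s\in\{24,\dots,27\}$ is excluded because the degree-sum identities (25)--(26) together with the bound (24) force $e_S<0$.
\item $s=3$ is excluded by greedily coloring the two large components of $H-S$ in $G$, which gives $\chi(G)\le 13$.
\item $s=28$ is excluded by building a triangle $xyz$ across $S$ and $X$ and then using Hall's theorem to find a perfect matching of $H-\{x,y,z\}$, which contradicts Lemma~\ref{lem:r28-triangle}.
\end{itemize}
Since the barrier list was complete, no 28-critical counterexample exists, and this contradicts the choice of $G_0$.

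The step needing the most care is confirming that the case lists are exhaustive. First, the table of admissible $s$ must follow correctly from (22)--(23) together with the parity constraint; it is $3,26,27,28$ in the regular case and $3,24,\dots,28$ in the deficit case. Second, the $s=3$ analysis must account for all component patterns. Both are finite checks, which I would redo by hand. In particular I would recheck the $s=3$ claim that components have order at least 22, since a component of order $c\ge 22$ in which every vertex has at least 21 inside neighbors gives $\chi(G[C])\le c-21$. For orders $(23,29)$ this is at most $2+8=10$, and for $(25,27)$ it is at most $4+6=10$, so adding the three vertices of $S$ gives $\chi(G)\le 13<28$.
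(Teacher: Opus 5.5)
Your proposal is correct and follows the paper's proof essentially step for step. It uses the same order reduction to $n=55$ with $m\in\{770,771\}$, excludes a disconnected $\comp{G}$ via (14), and gets factor-criticality and anti-tightness of $H$ from Stehl\'ik's theorem and (3). It then runs the same Tutte-barrier case split: $s=3$ by greedy coloring, $24\le s\le 27$ by $e_S<0$, and $s=28$ by a cross triangle plus Hall. Your hand-checks of the admissible barrier sizes and of the $s=3$ component patterns $(23,29)$ and $(25,27)$ agree with the paper's computations.
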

\begin{proof}
The preceding reduction leaves only order 55, and the complement analysis excludes both possible edge counts at that order. Hence no 28-critical counterexample exists; Lemma~\ref{lem:critical-reduction} completes the proof.
\end{proof}

\section{\texorpdfstring{The case $r=29$}{The case r=29}}
Assume for contradiction that $G$ is a 29-critical counterexample, with $n=|V(G)|$ and $m=\e(G)$. Then
\begin{equation}
\delta(G)\ge 29,\qquad m\ge \left\lceil\frac{29n}{2}\right\rceil.
\end{equation}

\subsection{Exact order reduction}
The small-order theorem excludes $n\le 33$. For $n=34,35$, Theorems~\ref{thm:gks} and~\ref{lem:sampling} give
\[
\begin{array}{c|c|c|c}
n&m_{\min}&q&B_q(n,m_{\min})-8281\\\hline
34&535&12&5783/81\\
35&558&12&48295/81
\end{array}
\]
Cranston excludes $36\le n\le 51$. For $52\le n\le 56$, Lemma~\ref{lem:join-edge} gives
\[
\begin{array}{c|c|c|c}
n&m_{\min}&q&B_q(n,m_{\min})-8281\\\hline
52&809&20&4125898/2907\\
53&815&20&21646/19\\
54&819&21&236002/285\\
55&823&22&10088/19\\
56&825&23&58301/322
\end{array}
\]
For $59\le n\le 81$, (29) and the exact values in Table~\ref{tab:r29} exclude every order. Cranston excludes $n\ge 82$ because $2.8118\cdot29=81.5422$. Hence only $n=57,58$ remain.

At $n=57$, $m\ge 827$, while
\[
B_{24}(57,831)-8281=-\frac{2706}{161},\qquad
B_{24}(57,832)-8281=\frac{2469}{161}>0.
\]
Thus
\begin{equation}
827\le m\le 831.
\end{equation}
If $\comp{G}$ were disconnected, (14) would give $m\ge 29^2-1=840$, impossible; hence $\comp{G}$ is connected.

At $n=58$, $m\ge 841$, and
\[
B_{24}(58,841)-8281=-\frac{37631}{621},\qquad
B_{24}(58,842)-8281=-\frac{16931}{621},
\]
whereas
\[
B_{24}(58,843)-8281=\frac{3769}{621}>0.
\]
Thus only $m=841,842$ are possible at order 58.

\subsection{The 57-vertex cases}
Put $H=\comp{G}$. Exactly as in the $r=28$ argument, Theorem~\ref{thm:stehlik} makes $H$ factor-critical, and the matching/clique-partition argument makes it anti-tight. Since $\binom{57}{2}=1596$ and $\Delta(H)\le 27$, define
\[
a(v)=27-d_H(v),\qquad A=\sum_v a(v)=57\cdot27-2\e(H).
\]
For the five edge counts in (30), the corresponding values are
\begin{equation}
A\in\{1,3,5,7,9\}.
\end{equation}

\begin{lemma}\label{lem:r29-57-triangle}
For every value in (31), $H$ contains a triangle.
\end{lemma}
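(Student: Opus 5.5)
We argue as in Lemma~\ref{lem:r28-triangle}. Suppose $H$ is triangle-free. By Theorem~\ref{thm:aes} it suffices to show $\delta(H)>2\cdot 57/5=22.8$, that is, $\delta(H)\ge 23$. Then $H$ is bipartite. A bipartite graph of odd order is never factor-critical: deleting a vertex from the larger side, or from either side when the sides differ by one, leaves two unequal sides. This contradicts the factor-criticality of $H$ obtained from Theorem~\ref{thm:stehlik}.

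So everything reduces to a minimum-degree bound. For every vertex, $a(v)=27-d_H(v)\ge 0$ and $\sum_v a(v)=A\le 9$. Hence a single vertex may have deficit up to $9$, which only gives $d_H(v)\ge 18$. That is not enough for Theorem~\ref{thm:aes}, so the crude deficit count fails when $A\ge 5$. I would therefore first obtain an independent lower bound on $\delta(H)$ from the structure of $G$.

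For a vertex $v$, we have $d_H(v)=56-d_G(v)$. Moreover $d_G(v)\le 33$ is needed for $d_H(v)\ge 23$. I would try to derive this from factor-criticality combined with anti-tightness. Take a perfect matching $M$ of $H-v$ containing $28$ edges. Anti-tightness says no edge of $M$ has both ends in $N_H(v)$. So every matching edge meets $N_H(v)$ in at most one vertex, giving $d_H(v)\le 28$, which is an upper bound only.

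For the lower bound, I would instead use $G$ directly. Suppose some vertex $u$ has $d_H(u)\le 22$, so $d_G(u)\ge 34$. I would then try to apply Theorem~\ref{thm:aes} to $H-u$ rather than $H$. Removing $u$ decreases the other degrees by at most one. The remaining deficit budget, namely $A$ minus the deficit already spent at $u$, which is at least $5$, is at most $4$. So every vertex of $H-u$ has degree at least $27-4-1=22$. We need $22>2\cdot 56/5=22.4$, and this fails only narrowly.

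So the plan is a two-step deficit count.
\begin{itemize}
\item If $\delta(H)\ge 23$, we conclude directly as above.
\item Otherwise, exactly one vertex $u$ can carry large deficit, and all other vertices have degree at least $23$ in $H$. Then $H-u$ has minimum degree at least $22$, on $56$ vertices.
\end{itemize}
In the second case I would handle $H-u$ by hand. $H-u$ has a perfect matching. If it is triangle-free with minimum degree $22>56\cdot 2/5-1$, I would argue it is bipartite via the sharper Andr\'asfai-type structure, or directly through a neighbourhood-intersection count in the triangle-free graph. Then $H$, being factor-critical and triangle-free, must have $u$ adjacent to both sides, and the bipartite $H-u$ must be balanced $28+28$.

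A second application of factor-criticality, deleting a vertex $w\ne u$, would then require matching $u$ together with an unbalanced remainder. This forces $u$ into a triangle or gives a contradiction by parity.

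The main obstacle is the second case: the AES threshold is missed by a fraction. I expect to resolve it with a direct count. In a triangle-free graph, adjacent vertices have disjoint neighbourhoods, so for $xy\in E(H)$ we get $d(x)+d(y)\le 57$. This is compatible with degrees near $27$, so the count alone does not close the case. The decisive input will have to be the combination with the perfect matchings of $H-v$ for every $v$.
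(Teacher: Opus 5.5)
Your treatment of $A\in\{1,3\}$ is correct and matches the paper: $\delta(H)\ge 24>22.8$, then Theorem~\ref{thm:aes} and the fact that an odd-order bipartite graph is not factor-critical. One small fix there: delete a vertex from the \emph{smaller} side. If the sides differ by one, deleting from the larger side leaves them balanced.

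For $A\in\{5,7,9\}$ the proposal has a genuine gap, as your last paragraph essentially concedes. Two of the steps you sketch would fail as stated.

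The first problem is bipartiteness of $H-u$. A triangle-free graph on $56$ vertices with minimum degree $22$ need not be bipartite: the blow-up of $C_5$ with parts of sizes $11,11,12,11,11$ is a counterexample. So there is no ``sharper Andr\'asfai-type'' threshold to appeal to. What you are missing is that the deficit budget localizes the low degrees. For $w\ne u$ we have $a(w)\le A-a(u)\le 4$ and $d_{H-u}(w)=27-a(w)$, minus one if $uw\in E(H)$. Hence $d_{H-u}(w)\le 22$ forces both $a(w)=4$ and $uw\in E(H)$. That in turn forces $a(u)=5$ and $A=9$, and makes $w$ unique. Outside this single configuration, $\delta(H-u)\ge 23>22.4$ and Theorem~\ref{thm:aes} applies directly.

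The exceptional configuration needs its own argument, which the paper supplies as follows.
\begin{itemize}
\item Delete both $u$ and $w$. The graph $K=H-\{u,w\}$ has $\delta(K)\ge 25$ on $55$ vertices, so it is bipartite.
\item Since $H-u$ and $H-w$ both have perfect matchings, the parts of $K$ have sizes $28$ and $27$, and each of $u,w$ has a neighbour in the $28$-part.
\item Triangle-freeness and $\delta(K)\ge 25$ then push all $21$ neighbours of $u$ in $K$, and all $22$ of $w$, into that $28$-part.
\item These two sets are disjoint because $uw\in E(H)$, contradicting $21+22>28$.
\end{itemize}

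The second problem is the final contradiction once $H-u$ is bipartite with parts $P,Q$ of size $28$. It does not come from a further use of factor-criticality or parity. With $u$ adjacent to both sides, every $H-w$ can have a perfect matching: match $u$ into the side not containing $w$. The contradiction instead comes from a degree count. Triangle-freeness forbids edges between $N(u)\cap P$ and $N(u)\cap Q$. So a vertex $p\in N(u)\cap P$ satisfies $23\le d_{H-u}(p)\le 28-|N(u)\cap Q|$, giving $|N(u)\cap Q|\le 5$, and symmetrically $|N(u)\cap P|\le 5$. Thus $d_H(u)\le 10$, whereas $d_H(u)=27-a(u)\ge 27-9=18$.

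With these two ingredients your outline becomes the paper's proof; without them the cases $A\in\{5,7,9\}$ remain unproved.
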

\begin{proof}
If $A=1$, then $\delta(H)\ge 26$; if $A=3$, then $\delta(H)\ge 24$. In either case $\delta(H)>2\cdot57/5$, so a triangle-free $H$ would be bipartite by Theorem~\ref{thm:aes}, impossible for a factor-critical graph of odd order.

Suppose now $A\in\{5,7,9\}$ and $H$ is triangle-free. Since $H$ is factor-critical it is not bipartite, so Theorem~\ref{thm:aes} gives a vertex $v$ with $d_H(v)\le 22$. Put $D=a(v)\ge 5$ and $R_0=A-D$.

If $R_0\le 3$, every vertex of $H-v$ has degree at least $26-R_0\ge 23$. Hence $H-v$ is bipartite. It has a perfect matching, so its parts $P,Q$ both have 28 vertices. Let $x=|N(v)\cap P|$ and $y=|N(v)\cap Q|$. Both are positive, for otherwise $H$ itself would be bipartite. Since $H$ is triangle-free, there are no edges between $N(v)\cap P$ and $N(v)\cap Q$. For $p\in N(v)\cap P$,
\[
d_{H-v}(p)\ge 26-R_0,\qquad d_{H-v}(p)\le 28-y,
\]
so $y\le 2+R_0$; similarly $x\le 2+R_0$. Therefore
\[
27-A+R_0=d_H(v)=x+y\le 4+2R_0,
\]
which requires $R_0\ge 23-A$. This is impossible for $A=5,7$. For $A=9$, the only remaining possibility is $D=5,R_0=4$.

In this exceptional case, if the remaining deficit 4 is not entirely carried by a neighbor $w$ of $v$, then every vertex of $H-v$ still has degree at least 23 and the preceding bipartite argument applies. Thus necessarily
\[
a(v)=5,\qquad a(w)=4,\qquad vw\in E(H),
\]
and all other vertices have degree 27. Let $K=H-\{v,w\}$. Then $|K|=55$ and $\delta(K)\ge 25>2\cdot55/5$, so $K$ is bipartite. A perfect matching of $H-v$ matches $w$ to a vertex whose deletion makes $K$ perfectly matchable; hence the two parts of $K$ differ in size by one. The same follows from a perfect matching of $H-w$. Thus the parts have sizes 28 and 27, and both $v$ and $w$ have a neighbor in the 28-vertex part.

If $v$ had neighbors in both parts, triangle-freeness and $\delta(K)\ge 25$ would allow at most three neighbors in the 28-part and at most two in the 27-part, contradicting $d_K(v)=21$. Hence all 21 neighbors of $v$ in $K$ lie in the 28-part. Similarly all 22 neighbors of $w$ lie in that same part. Since $vw$ is an edge and $H$ is triangle-free, the two neighbor sets are disjoint, but $21+22>28$, a contradiction.
\end{proof}

Fix a triangle $T$. Anti-tightness implies $H-T$ has no perfect matching, so Tutte gives $S\supseteq T$, $s=|S|$, with
\begin{equation}
\odd(H-S)\ge s-1.
\end{equation}
For $A=1,3,5,7,9$, the corresponding minimum degrees are at least $26,24,22,20,18$. The component-size inequality and parity give the complete possibilities
\[
\begin{array}{c|c}
A&\text{possible }s\\\hline
1&3\text{ or }26\le s\le 29\\
3&3\text{ or }24\le s\le 29\\
5&3\text{ or }22\le s\le 29\\
7&3,4\text{ or }20\le s\le 29\\
9&3,4\text{ or }18\le s\le 29
\end{array}
\]
Let $X=V(H)\setminus S$ and write $A_S,A_X$ for the degree deficits. Degree-sum comparison gives
\begin{equation}
e_S=e_X+\frac{27(2s-57)+(A_X-A_S)}{2},
\end{equation}
while (32) yields
\begin{equation}
e_X\le \binom{59-2s}{2}.
\end{equation}
Using $A_X-A_S\le A$, the right side of (33) is negative for every listed intermediate large value $s\le 28$. Indeed, the resulting upper bound is a convex quadratic in $s$, so its maximum on each interval occurs at an endpoint; for $A=5,7,9$ the endpoint pairs are respectively $(-68,-8)$, $(-55,-7)$, and $(-26,-6)$, while the $A=1,3$ bounds are smaller.

The small values are impossible by coloring. If $H-S$ has $t$ components, then $t\ge s-1$, and a component $C$ of order $c$ satisfies
\[
d_{H[C]}\ge 27-A-s,\qquad \chi(G[C])\le c-27+A+s.
\]
Therefore
\[
\chi(G)\le 57-t(27-A-s).
\]
The worst listed cases are $(A,s,t)=(9,3,2)$, giving 27, and $(9,4,3)$, giving 15; all other cases are stronger. Hence only
\begin{equation}
s=29
\end{equation}
remains.

Now $|X|=28$ and (32) makes $H[X]$ edgeless. For $u\in S$ define
\[
w(u)=a(u)+d_S(u)=27-d_X(u).
\]
From (33),
\begin{equation}
\sum_{u\in S}w(u)=A_S+2e_S=27+A_X\le 36.
\end{equation}
There is an edge $yz\in H[S]$ because $e_S=(27+A_X-A_S)/2\ge 9$. For any such edge,
\[
w(y)+w(z)\le e_S+1+A_S=\frac{29+A}{2}\le 19.
\]
Thus $d_X(y)+d_X(z)\ge 35$, so $y,z$ have a common neighbor $x\in X$.

Delete $x,y,z$. The remaining bipartite graph between $X'=X\setminus\{x\}$ and $S'=S\setminus\{y,z\}$ is $27\times27$. Every vertex of $X'$ has degree at least 16 after deleting $y,z$, so Hall is automatic for subsets of size at most 16. If $Q\subseteq X'$ has size $t\ge 17$ and violates Hall, then $S'\setminus N(Q)$ contains at least $r_0=28-t$ vertices; each has at most $r_0$ neighbors in the original $X$ and hence weight at least $27-r_0$. If $r_0\ge 2$, (36) gives
\[
36\ge r_0(27-r_0)\ge 50,
\]
impossible. If $r_0=1$, some $u\in S'$ has $w(u)\ge 26$. If $d_S(u)=0$, then $w(u)=a(u)\le 9$; otherwise an $S$-neighbor $v$ of $u$ satisfies the edge bound $w(u)+w(v)\le 19$. Both are impossible. Hall therefore gives a perfect matching of $H-\{x,y,z\}$, contradicting anti-tightness.

\begin{proposition}\label{prop:no57}
No 29-critical counterexample has 57 vertices.
\end{proposition}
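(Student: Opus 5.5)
The plan is to assemble the 57-vertex analysis already carried out in this subsection into a single contradiction. Suppose $G$ is a 29-critical counterexample with $|V(G)|=57$, and put $m=\e(G)$.

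\emph{Step 1: pin down the edge count and the complement.} By (29) we have $m\ge 827$. Lemma~\ref{lem:sampling} with $q=24$ gives $m\le 831$, as recorded in (30). If $\comp{G}$ were disconnected, (14) of Lemma~\ref{lem:join-edge} would give $m\ge 840$. So $H=\comp{G}$ is connected.

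\emph{Step 2: structure of $H$.} Theorem~\ref{thm:stehlik} makes every $G-v$ colorable with 28 classes, each of size at least two. On 56 vertices this forces every class to have size exactly two, so $H$ is factor-critical. The clique-partition argument via (3) then makes $H$ anti-tight. The bound $\Delta(H)\le 27$ comes from $\delta(G)\ge 29$, and the deficit parameter $A$ takes values in $\{1,3,5,7,9\}$, as in (31).

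\emph{Step 3: find a triangle and a Tutte barrier.} Lemma~\ref{lem:r29-57-triangle} gives a triangle $T$. By anti-tightness, $H-T$ has no perfect matching, so Tutte's theorem gives a barrier $S\supseteq T$ satisfying (32).

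\emph{Step 4: eliminate the barrier sizes.} Use the case table of possible $s$ for each $A$. Intermediate values $s\le 28$ are excluded by (33)–(34), since they would make $e_S$ negative. The small values $s\in\{3,4\}$ are excluded by the greedy-coloring bound $\chi(G)\le 57-t(27-A-s)<29$. For $s=29$, the weight identity (36) yields an edge $yz$ in $S$ with a common neighbor $x\in X$. A Hall-type argument then gives a perfect matching of $H-\{x,y,z\}$, which contradicts anti-tightness.

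Every case therefore ends in a contradiction, so no 29-critical counterexample has order 57.

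The main obstacle is Step 3 for large deficits $A\in\{5,7,9\}$. There Theorem~\ref{thm:aes} does not apply globally, and the argument must fall back on the delicate exceptional configuration $a(v)=5$, $a(w)=4$. Since that lemma is already established above, the proposition itself reduces to bookkeeping. What remains is to check that the case table and the endpoint computations really cover all pairs $(A,s)$, in particular the borderline case $(A,s,t)=(9,3,2)$, where the coloring bound gives $27<29$.
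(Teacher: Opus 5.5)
Your proposal is correct and follows the paper's own argument step for step: the window $827\le m\le 831$ with connected complement, Stehl\'ik's theorem giving factor-criticality and then anti-tightness, the triangle lemma, a Tutte barrier $S\supseteq T$, and the elimination of the barrier sizes. As in the paper, intermediate $s\le 28$ fall to degree-sum counting, $s\in\{3,4\}$ to the greedy coloring bound, and $s=29$ to the weight identity together with a Hall argument; the remaining bookkeeping you flag is exactly what the paper carries out, and it checks.
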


\subsection{The 58-vertex cases}
If $m=841$, then $2m=58\cdot29$, so $G$ is 29-regular. By Theorem~\ref{thm:rabern},
\[
29=\chi(G)\le \max\{\omega(G),28,18\},
\]
where the last term is the ceiling in Theorem~\ref{thm:rabern} evaluated at 58. Thus $\omega(G)\ge 29$. A $K_{29}$ is then a proper 29-chromatic subgraph of the 58-vertex critical graph $G$, impossible. Hence $m\ne 841$.

Assume from now on
\begin{equation}
|V(G)|=58,\qquad \e(G)=842,
\end{equation}
and put $H=\comp{G}$. The total degree excess of $G$ above 29 is two, so the degree sequence of $G$ is either $(31,29^{57})$ or $(30^2,29^{56})$. Equivalently,
\begin{equation}
\e(H)=811,\qquad (d_H(v))=(26,28^{57})\quad\text{or}\quad(27^2,28^{56}).
\end{equation}
Thus
\begin{equation}
\delta(H)\ge 26,\qquad \varepsilon(v):=28-d_H(v)\ge 0,\qquad \sum_v\varepsilon(v)=2.
\end{equation}

\begin{lemma}\label{lem:r29-58-triangle}
The graph $H$ contains a triangle.
\end{lemma}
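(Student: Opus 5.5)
The plan is to argue by contradiction through Theorem~\ref{thm:aes}, exactly as in the triangle lemmas for order $55$ and $57$. At order $58$ the contradiction will come from clique size in $G$ rather than from factor-criticality: $58$ is even, so odd-order factor-criticality is not available, and we do not need it.

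Suppose $H$ is triangle-free. By the degree description of $H$ obtained just before the lemma, $\delta(H)\ge 26$. Since
\[
26>\frac{2\cdot 58}{5}=23.2,
\]
Theorem~\ref{thm:aes} makes $H$ bipartite. Let its parts be $P$ and $Q$.

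Because $|P|+|Q|=58$, one part, say $P$, has at least $29$ vertices. The set $P$ is independent in $H$, so it is a clique in $G=\comp{H}$, and hence $G$ contains $K_{29}$.

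Now $K_{29}$ has chromatic number $29$. It is a proper subgraph of $G$, because $G$ has $58>29$ vertices. This contradicts the $29$-criticality of $G$: every proper subgraph must have chromatic number at most $28$. Hence $H$ contains a triangle.

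I do not expect a real obstacle. The only points to check are the strict inequality needed for Andr\'asfai--Erd\H{o}s--S\'os, which holds with room to spare, and that criticality rules out a proper $K_{29}$. This is the same fact already used in the regular case $m=841$ via Theorem~\ref{thm:rabern}.
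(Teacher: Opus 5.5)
Your proof is correct, but it takes a different route from the paper's. You combine the degree bound $\delta(H)\ge 26$ from (39) with Theorem~\ref{thm:aes} to make a triangle-free $H$ bipartite. You then use vertex-criticality: the larger side has at least $29$ vertices and is independent in $H$, so it gives a proper $K_{29}$ in $G$.

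The paper instead uses edge-criticality, and needs neither Andr\'asfai--Erd\H{o}s--S\'os nor the degree sequence. For any $xy\in E(G)$, the graph $G-xy$ is $28$-colorable, so $H+xy$ has a partition into $28$ cliques. If $H$ is triangle-free, every clique of order at least three in $H+xy$ must contain the new edge $xy$. Such a clique cannot have order four, since it would then contain a triangle of $H$. So at most one part is a triangle and the others have order at most two. These parts cover at most $3+27\cdot 2=57<58$ vertices.

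The paper's argument is therefore more robust: it works for any $r$-critical graph on $2r$ vertices with triangle-free complement, with no minimum-degree input. Yours is equally short, but relies on the specific edge count $842$ (through (39)) and on an external extremal theorem. In the present setting both are complete.
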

\begin{proof}
Suppose not. Choose an edge $xy\in E(G)$. Since $G$ is critical, the proper subgraph $G-xy$ is 28-colorable. Equivalently, $H+xy$ can be partitioned into 28 cliques. Every clique of order at least three in $H+xy$ must contain the newly added edge $xy$; moreover a 4-clique would contain a triangle using only edges of $H$. Hence at most one clique has order three and every other clique has order at most two. Twenty-eight cliques then cover at most $3+27\cdot2=57$ vertices, not 58.
\end{proof}

\subsection{Two disjoint triangles}
\begin{lemma}\label{lem:two-triangles}
The graph $H$ contains two vertex-disjoint triangles.
\end{lemma}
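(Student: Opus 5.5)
The plan is to argue by contradiction. Assume that every two triangles of $H$ share a vertex; Lemma~\ref{lem:r29-58-triangle} guarantees that at least one triangle exists.

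The first observation is that $H$ itself cannot contain two disjoint triangles or a $K_4$. Either configuration, together with a perfect matching of the remaining vertices, would partition $V(H)$ into $28$ cliques, which would force $\chi(G)\le 28$. I do not need this yet; what I do need is the same mechanism applied to $H+xy$ for an edge $xy\in E(G)$, exactly as in the proof of Lemma~\ref{lem:r29-58-triangle}.

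By criticality, $H+xy$ is partitioned into $28$ cliques covering $58$ vertices. Since the clique sizes satisfy $\sum(|C|-2)\ge 2$, the partition contains either a $4$-clique or two disjoint $3$-cliques. Every $3$-clique that is not a triangle of $H$ contains $xy$. A $4$-clique containing $xy$ consists of two triangles of $H$ sharing the edge $zw$, so it yields no disjoint pair. Hence, under our assumption, the partition must contain a triangle $xyz$ with $xz,yz\in E(H)$ and a triangle $T'$ of $H$ disjoint from $\{x,y,z\}$, with the remaining $52$ vertices perfectly matched in $H$.

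The structural step uses the fact that a graph with no two vertex-disjoint triangles has triangle cover number at most $2$ (the $\nu=1$ case of Tuza's problem). I would quote this, or prove it directly from the classification of pairwise-intersecting triangle families: such a family is a star, or the triangles of a $K_4$, or it lives on a $5$-set forming a $K_5^-$-type configuration. This gives a set $U$ with $|U|\le 2$ such that $H-U$ is triangle-free. Since $\delta(H-U)\ge 26-2=24>2\cdot 56/5$, Theorem~\ref{thm:aes} makes $H-U$ bipartite with parts $P,Q$.

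The degree data from (39) then pin the structure down tightly. Each vertex of $H-U$ has at least $24$ neighbours on the opposite side, so $|P|,|Q|\ge 24$. Also $\e(H)=811$, while a bipartite graph on $56$ vertices has at most $784$ edges. Hence the vertices of $U$ carry roughly $27$ or more edges in total, distributed between $P$ and $Q$. For each $u\in U$ I would record $N(u)\cap P$ and $N(u)\cap Q$. Every triangle passes through $U$, and any two triangles meet.

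The final step chooses $xy\in E(G)$ to contradict the forced configuration. Take $x,y$ on the same side, say both in $P$; they are nonadjacent in $H$, so $xy\in E(G)$. Choose them to avoid $U$ and to have their common $H$-neighbours $z$ in $Q$ only, so that $xyz$ does not meet $U$. The required triangle $T'$ then lies in $H-\{x,y,z\}$, must meet $U$, and uses a $u\in U$ with neighbours on both sides. I would then count the perfect matching of the remaining $52$ vertices. The remaining bipartite part is unbalanced by an amount determined by $|P|-|Q|$ and by how $U$ and $T'$ sit, in the same way as the parity argument in Lemma~\ref{lem:r29-57-triangle}. Choosing $x,y$ on the larger side (or on the side that makes the imbalance exceed what $U$ can absorb) should make this matching impossible, giving the contradiction.

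I expect the main obstacle to be this last bookkeeping step. One has to rule out every placement of $U$ relative to $P,Q$: $|U|=1$ versus $|U|=2$, the two vertices of $U$ adjacent or not, and the sizes $|P|-|Q|$. The degree sequence $(26,28^{57})$ or $(27^2,28^{56})$ and the edge count $811$ must be used to show that in each placement some choice of same-side pair $xy$ leaves an unmatchable imbalance. If the cover-number citation is unwanted, I would first try the star case (all triangles through one vertex $v$, so $U=\{v\}$) separately, since there $H-v$ is bipartite on $57$ vertices and the parity contradiction is immediate.
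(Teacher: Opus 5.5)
There is a genuine gap, and it is the final step. That step is only sketched, and the imbalance mechanism you propose for it cannot work.

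You never use the size bound that drives the paper's proof. Each side of the bipartition is independent in $H$, hence a clique in $G$, so it has at most $28$ vertices; a $K_{29}$ would be a proper $29$-chromatic subgraph of the $58$-vertex critical graph. This disposes of $|U|=1$ at once, and for $|U|=2$ it forces $|P|=|Q|=28$. But then the cardinalities in your forced configuration balance exactly. Deleting $x,y\in P$, $z\in Q$ and $T'=\{u_i,p,q\}$ leaves $25$ vertices of $P$, $26$ of $Q$, and $u_j$, which can be matched into $Q$. Similarly, $T'=\{u_1,u_2,q\}$ leaves $26$ and $26$. So no choice of a same-side pair $xy$ produces an unmatchable imbalance. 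A contradiction has to come from how the triangles through $U$ sit inside the nearly complete bipartite graph, and your proposal contains no such argument.

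That is exactly what the paper supplies. It takes $U=T$ to be an arbitrary triangle; no cover theorem is needed, since if no two triangles are disjoint then $H-T$ is triangle-free, and $\delta(H-T)\ge 23>2\cdot 55/5$. It then gets $|A|=27$ and $|B|=28$ from the $K_{29}$ bound, and edge counting gives $\alpha=24+\varepsilon_B$ and $\beta=52+\varepsilon_A$. Each $t_i$ with $A_i\ne\varnothing$ spans a family $F_i$ of at least $18$ triangle-forming $A$--$B$ edges. The case of a single nonempty $A_i$ is ruled out directly. For several nonempty $A_i$, the paper uses that pairwise cross-intersecting edge families of size at least three in a bipartite graph are stars with a common centre, which is incompatible with those values of $\alpha$ and $\beta$.

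Several intermediate claims are also incorrect.
\begin{itemize}
\item Your ``first observation'' is false as stated. Two disjoint triangles or a $K_4$ give a $28$-clique partition only if the remaining vertices are perfectly matchable in $H$. Indeed, the lemma you are proving asserts that $H$ does contain two disjoint triangles.
\item Your case analysis of the $28$-clique partition of $H+xy$ is incomplete. The clique through $x,y$ may be just $\{x,y\}$, with the surplus carried by a $K_4$ or $K_5$ of $H$. It may also be a $4$-clique $\{x,y,z,w\}$ with the other $54$ vertices perfectly matched. Neither case is excluded by the assumption that triangles pairwise meet, so the conclusion that the partition must contain $xyz$ and a disjoint triangle $T'$ does not follow.
\item The claim that two vertices always hit all triangles is false in the vertex-disjoint setting. $K_5$ has pairwise intersecting triangles but needs three vertices to meet them all. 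The $\nu=1$ case of Tuza's problem concerns edge-disjoint triangles and edge covers, so it does not give what you cite it for.
\end{itemize}
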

\begin{proof}
Assume otherwise and fix a triangle $T=\{t_1,t_2,t_3\}$. Then $H-T$ is triangle-free and has minimum degree at least 23. Since $23>2\cdot55/5$, Theorem~\ref{thm:aes} makes $H-T$ bipartite; let its parts be $A,B$. Each part is independent in $H$, hence a clique in $G$. A proper $K_{29}$ cannot occur in a 29-critical graph with 58 vertices, so $|A|,|B|\le 28$. Since $|A|+|B|=55$, assume
\begin{equation}
|A|=27,\qquad |B|=28.
\end{equation}
Set
\[
A_i=N_H(t_i)\cap A,\qquad B_i=N_H(t_i)\cap B,\qquad
\alpha=\sum_i|A_i|,\qquad \beta=\sum_i|B_i|.
\]
Let $M$ be the number of missing $A$--$B$ edges and put $\varepsilon_A=\sum_{a\in A}\varepsilon(a)$, $\varepsilon_B=\sum_{b\in B}\varepsilon(b)$. For $a\in A$, the number of missing neighbors in $B$ equals $\varepsilon(a)+d_T(a)$, so $M=\varepsilon_A+\alpha$. For $b\in B$, the corresponding number equals $\varepsilon(b)+d_T(b)-1$, so $M=\varepsilon_B+\beta-28$. Finally,
\[
811=(756-M)+\alpha+\beta+3.
\]
Solving gives
\begin{equation}
M=24+\varepsilon_A+\varepsilon_B,\qquad
\alpha=24+\varepsilon_B\in[24,26],\qquad
\beta=52+\varepsilon_A\in[52,54].
\end{equation}
Every $B_i$ is nonempty, for otherwise $B\cup\{t_i\}$ is an independent 29-set in $H$, hence a proper $K_{29}$ in $G$.

Whenever $A_i\ne\varnothing$, let $F_i$ be the set of $A$--$B$ edges of $H$ joining $A_i$ to $B_i$; each such edge completes a triangle with $t_i$. We claim
\begin{equation}
|F_i|\ge 18.
\end{equation}
A vertex of $A$ misses at most $\varepsilon+d_T\le 5$ vertices of $B$, while a vertex of $B$ misses at most $\varepsilon+d_T-1\le 4$ vertices of $A$. If $x=|A_i|$ and $y=|B_i|$, then
\[
|F_i|\ge \max\{x(y-5),y(x-4)\}.
\]
Also $2+x+y=d_H(t_i)\ge 26$, so $x+y\ge 24$. For positive integers $x,y$ with $x+y\ge 24$, this maximum is at least 18: if $x\le 4$, then $x(y-5)\ge x(19-x)\ge 18$; if $x\ge 5$ and $y\le 5$, then $y(x-4)\ge y(20-y)\ge 19$; and if $x\ge 5,y\ge 6$, then $x(y-5)\ge 18$ (the smallest boundary case is $x=18,y=6$). Thus (42) holds.

For $i\ne j$, every edge of $F_i$ meets every edge of $F_j$, because two disjoint such edges would yield two disjoint triangles. We use the elementary fact that two cross-intersecting edge families in a bipartite graph, each of size at least three, must be stars with a common center. Indeed, if one family contains two disjoint edges, any edge of the other must join opposite endpoints of those two edges, giving at most two choices. Thus both families have matching number one and are stars; two stars of size at least three can be cross-intersecting only with the same center.

Let $q$ be the number of nonempty $A_i$. By (41), $q\ne 0$. If $q=1$, say only $A_1$ is nonempty, then $|B_2|,|B_3|\ge 24$, so $|B_2\cap B_3|\ge 20$. Take $ab\in F_1$ and choose $b_0\in(B_2\cap B_3)\setminus\{b\}$. Then $t_1ab$ and $t_2t_3b_0$ are disjoint triangles, a contradiction.

If $q\ge 2$, all nonempty $F_i$ share a star center. If the center lies in $A$, then $|B_i|\ge 18$ for every nonempty family. Any second vertex of $A_i$ has at least $|B_i|-5\ge 13$ neighbors in $B_i$, producing an edge outside the common star; hence each nonempty $A_i$ is exactly the singleton center. Then $\alpha=q\le 3$, contrary to (41). If the common center lies in $B$, the symmetric argument, using at most four missing $A$-neighbors, makes every corresponding $B_i$ equal to that singleton. For $q=3$ this contradicts $\beta\ge 52$. For $q=2$, each corresponding $t_i$ has $2+|A_i|+1\ge 26$, so $|A_i|\ge 23$ and $\alpha\ge 46$, again impossible.
\end{proof}

Fix two disjoint triangles $T_1,T_2$ and put $U=T_1\cup T_2$. If $H-U$ had a perfect matching, the two triangles and the 26 matching edges would partition $H$ into 28 cliques, contradicting (3). Hence $H-U$ has no perfect matching. Choose an inclusion-minimal Tutte witness $R\subseteq V(H)\setminus U$, and put
\[
S=U\cup R,\qquad s=|S|,\qquad X=V(H)\setminus S.
\]
Because $|H-U|=52$ is even,
\begin{equation}
\odd(H-S)\ge |R|+2=s-4.
\end{equation}
Every component of $H-S$ has order at least $27-s$. Consequently the only possible barrier sizes are
\begin{equation}
s\in\{6,26,27,28,29,30,31\}.
\end{equation}

\begin{lemma}[Minimal-barrier expansion]\label{lem:minimal-barrier}
Every $u\in R$ has neighbors in at least three distinct components of $H-S$. If $s=31$, then $H-S$ consists of 27 isolated vertices and every nonempty $Y\subseteq R$ satisfies
\begin{equation}
|N_X(Y)|\ge |Y|+2.
\end{equation}
If $s=30$ and $H[X]$ has an edge, then $H-S$ has exactly 26 odd components; writing $\mathcal O$ for them, every nonempty $Y\subseteq R$ satisfies
\begin{equation}
|N_{\mathcal O}(Y)|\ge |Y|+2,
\end{equation}
where $N_{\mathcal O}(Y)$ denotes the set of odd components meeting $N_H(Y)$.
\end{lemma}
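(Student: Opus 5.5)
The whole lemma rests on the minimality of $R$: removing a vertex, or a set of vertices, from $R$ must destroy the Tutte inequality $\odd(H-S)\ge |R|+2$. Throughout, $|V(H)|=58$ is even. Hence $\odd(H-S')\equiv |S'|\pmod 2$ for every $S'$, so the defect $\odd(H-S)-|R|$ is always even.

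\emph{First claim.} Take $u\in R$ and put $R'=R\setminus\{u\}$, $S'=S\setminus\{u\}$. In $H-S'$ the vertex $u$ merges every component of $H-S$ that it touches into one component. If $u$ meets at most two components, the number of odd components drops by at most one. Concretely:
\begin{itemize}[nosep]
\item if $u$ meets no component, $\{u\}$ becomes a new odd component;
\item if $u$ meets one odd component, the merged component is even and we lose that one odd component;
\item if $u$ meets two odd components, the merged component is odd and we again lose exactly one;
\item mixtures involving even components lose at most one odd component.
\end{itemize}
In every case $\odd(H-S')\ge \odd(H-S)-1\ge |R|+1=|R'|+2$. So $R'$ is a smaller witness, contradicting minimality. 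Therefore $u$ meets at least three components.

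\emph{The case $s=31$.} Here $|X|=27$, and (43) gives $\odd(H-S)\ge 27$, so every vertex of $X$ is an isolated component and $H[X]$ is edgeless. Now fix a nonempty $Y\subseteq R$ and delete it from the barrier, i.e.\ take $R'=R\setminus Y$. In $H-(S\setminus Y)$, each component containing vertices of $Y$ absorbs the isolated vertices of $N_X(Y)$. There are at most $|Y|$ such components, and each is either odd or even. Therefore
\[
\odd(H-(S\setminus Y))\ \ge\ 27-|N_X(Y)|-|Y|\cdot 0+(\text{odd merged pieces}).
\]
The crude bound $\odd\ge 27-|N_X(Y)|$ already suffices. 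Minimality requires $\odd(H-(S\setminus Y))<|R'|+2=|R|-|Y|+2=27-|Y|$. By parity, in fact $\odd\le |R'|=25-|Y|$. Combining the two bounds gives $27-|N_X(Y)|\le 25-|Y|$, that is, $|N_X(Y)|\ge|Y|+2$, which is (45).

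\emph{The case $s=30$.} Here $|X|=28$ and $\odd(H-S)\ge 26$, and every component has order at least $27-30<1$, so orders impose no constraint. Suppose $H[X]$ has an edge. Then some component has at least two vertices, so at most $27$ components fit in $X$. Parity forces the number of odd components to be even, giving exactly $26$ odd components, which is the first assertion.

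The inequality (46) follows exactly as for $s=31$. Removing $Y$ leaves the odd components not in $N_{\mathcal O}(Y)$ untouched, since those components do not meet $N_H(Y)$. Minimality together with parity gives $\odd(H-(S\setminus Y))\le |R|-|Y|=24-|Y|$. Hence $26-|N_{\mathcal O}(Y)|\le 24-|Y|$, which is (46).

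The one delicate point is the parity upgrade from strict inequality to a deficit of $2$. It needs the observation that $\odd(H-S')-|S'|$ is even, with $|S'|$ counting $U$ as well, together with the correct bookkeeping $|R'|+2=s'-4$.
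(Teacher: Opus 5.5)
Your proof is correct and follows the paper's argument essentially step for step. The three-component claim comes from minimality of $R$ together with the parity of $\odd(H-S')-|R'|$. The inequalities (45) and (46) come from counting the untouched singletons (for $s=31$) or untouched odd components (for $s=30$) against $|R|-|Y|$.

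The only blemish is the garbled intermediate display containing the $|Y|\cdot 0$ term. You can delete it, since the crude bound $\odd\ge 27-|N_X(Y)|$ is all you actually use.
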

\begin{proof}
If $u\in R$ met at most two components, adding $u$ back to $H-S$ could reduce the number of odd components by at most one. The current Tutte defect is at least two, so $R\setminus\{u\}$ would still be a witness, contradicting inclusion-minimality.

For the stronger assertions, minimality says that $R\setminus Y$ is not a Tutte witness. If $s=31$, all 27 vertices of $X$ are isolated and the $27-|N_X(Y)|$ untouched vertices remain odd components after adding $Y$ back. Hence
\[
27-|N_X(Y)|\le |R|-|Y|=25-|Y|,
\]
which is (45). If $s=30$ and $H[X]$ has an edge, parity and (43) force exactly 26 odd components. The untouched odd components similarly give $26-|N_{\mathcal O}(Y)|\le 24-|Y|$, proving (46).
\end{proof}

Let $e_S=\e(H[S])$, $e_X=\e(H[X])$, and $E_S=\sum_{v\in S}\varepsilon(v)$, $E_X=\sum_{v\in X}\varepsilon(v)$. Comparing degree sums gives
\begin{equation}
e_S=e_X+28(s-29)+\frac{E_X-E_S}{2}.
\end{equation}
From (43),
\begin{equation}
e_X\le \binom{63-2s}{2}.
\end{equation}
For $s=26,27,28$, equations (47) and (48) give
\[
e_S\le 55-84+1=-28,\qquad e_S\le 36-56+1=-19,\qquad e_S\le 21-28+1=-6,
\]
respectively, impossible. If $s=6$, then $H-S$ has at least two components, and every component $C$ satisfies $d_{H[C]}\ge 20$. Thus $\chi(G[C])\le |C|-20$. The components of $H-S$ use at most 12 colors in total, while $S$ uses at most six, giving $\chi(G)\le 18$. Therefore only
\begin{equation}
s\in\{29,30,31\}
\end{equation}
remain.

\subsection{\texorpdfstring{Eliminating $s=29$}{Eliminating s=29}}
Here $|X|=29$ and (43) gives at least 25 odd components. Hence every component of $H[X]$ has at most five vertices, so $\Delta(H[X])\le 4$ and $e_X\le 10$. If $e_X=0$, then $X$ is an independent 29-set in $H$, giving a proper $K_{29}$ in $G$; hence $e_X\ge 1$. By (47),
\begin{equation}
e_S\le e_X+1\le 11.
\end{equation}
Choose an edge $ab\in H[X]$. From each original triangle $T_i$ choose an edge $y_iz_i$. Since $d_S(y_i)+d_S(z_i)\le e_S+1\le 12$ and both endpoints have degree at least 26 in $H$,
\[
d_X(y_i)+d_X(z_i)\ge 40.
\]
Thus each pair $y_i,z_i$ has at least 11 common neighbors in the 29-set $X$. Choose distinct common neighbors $x_1,x_2$, both outside $\{a,b\}$. Then $x_iy_iz_i$ are two disjoint triangles.

Delete these two triangles and the edge $ab$. There remain 25 vertices on each side $S$ and $X$. Every remaining $x\in X$ had at least $26-4=22$ neighbors in $S$ and loses at most four of them, so its remaining bipartite degree is at least 18. Hall is automatic for subsets of size at most 18. If $Q$ has size $t\ge 19$ and violates Hall, then at least $r_0=26-t\in\{1,\ldots,7\}$ remaining vertices $u\in S$ have all their original $X$-neighbors among the four deleted $X$-vertices and the $25-t$ remaining vertices outside $Q$. Hence
\[
d_X(u)\le r_0+3\le 10.
\]
But (50) gives
\[
d_X(u)=d_H(u)-d_S(u)\ge 26-e_S\ge 15,
\]
a contradiction. Hall therefore provides a perfect matching. Together with the two triangles and $ab$, it partitions $H$ into 28 cliques, contrary to (3). Hence $s=29$ is impossible.

\subsection{\texorpdfstring{Eliminating $s=30$}{Eliminating s=30}}
Now $|X|=28$ and (43) gives at least 26 odd components. Consequently every component of $H[X]$ has order at most three,
\[
\Delta(H[X])\le 2,\qquad e_X\le 3,
\]
and (47) gives
\begin{equation}
e_S\le 32.
\end{equation}
Call $u\in S$ \emph{low} if $d_X(u)\le 2$. Two low vertices would each have $S$-degree at least 24 and hence would be incident with at least $24+24-1=47$ distinct $S$-edges, contradicting (51). Thus there is at most one low vertex. By Lemma~\ref{lem:minimal-barrier}, a low vertex cannot lie in $R$, so if it exists it lies in $U$.

Every triangle in $S$ has an edge whose endpoints have a common neighbor in $X$. Otherwise the three $X$-neighborhoods are pairwise disjoint, so their total size is at most 28. The sum of the three total degrees is at least 78, hence the sum of their $S$-degrees is at least 50. The number of distinct $S$-edges incident with the triangle is then at least $50-3=47$, again contradicting (51). Call such an edge \emph{good}.

First suppose there is no low vertex, or the unique low vertex $\ell$ satisfies $d_X(\ell)\ge 1$. If $\ell$ exists, choose $x_1\in N_X(\ell)$. We have $d_S(\ell)\ge 24$ and, since $\Delta(H[X])\le 2$, also $d_S(x_1)\ge 24$. The vertex $\ell$ has at least 19 neighbors in the 24-set $R$, while $x_1$ has at least 18; therefore they have at least 13 common neighbors in $R$. Choose $w$ among them. Then $\ell wx_1$ is a cross triangle. If there is no low vertex, use instead a good edge of $T_1$ and a common neighbor $x_1\in X$.

Use a good edge of the other original triangle to construct a second cross triangle. Its $X$-anchor can be chosen different from $x_1$. Indeed, if after deleting $x_1$ no edge of that triangle had a common $X$-neighbor, its three residual $X$-neighborhoods would be pairwise disjoint. Their original total $X$-degree would then be at most $27+3=30$, so the three $S$-degrees would sum to at least $78-30=48$, yielding at least 45 incident $S$-edges, contrary to (51). Thus we obtain two disjoint cross triangles with distinct $X$-anchors, containing the unique low vertex if one exists.

After deleting the two cross triangles, a $26\times 26$ bipartite graph remains. Each remaining $X$-vertex had at least 24 neighbors in $S$ before the four $S$-vertices were deleted, so its remaining degree is at least 20. If Hall fails on a set of size $t\ge 21$, put $r_0=27-t\in\{1,\ldots,6\}$. At least $r_0$ opposite vertices then have original $X$-degree at most
\[
2+(26-t)=r_0+1,
\]
where the first term accounts for the two deleted $X$-anchors. Hence each has $S$-degree at least $25-r_0$. If $r_0\ge 2$, those $r_0$ vertices are incident with at least
\[
r_0(25-r_0)-\binom{r_0}{2}\ge 45
\]
distinct $S$-edges, contradicting (51). If $r_0=1$, the bad vertex is low, but the only possible low vertex was already deleted. Thus Hall holds, producing a 28-clique partition, a contradiction.

It remains to treat a unique low vertex $\ell$ with $d_X(\ell)=0$. Then $e_X>0$, since otherwise $X\cup\{\ell\}$ is an independent 29-set in $H$. Because $|X|=28$ and there are at least 26 odd components, $H[X]$ has a unique nontrivial component $C$, of order two or three. Let $T_0$ be the original triangle containing $\ell$ and $T$ the other triangle; keep $T_0$ as an $S$-only clique.

The triangle $T$ has total $X$-degree at least 43. Indeed, at most $e_S\le 32$ distinct $S$-edges are incident with it, so the sum of its three $S$-degrees is at most 35, whereas its total degrees sum to at least 78. Therefore some edge $yz$ of $T$ has a common neighbor $x\in X\setminus C$: otherwise the neighborhoods outside $C$ would be pairwise disjoint and the total $X$-degree would be at most
\[
(28-|C|)+3|C|=28+2|C|\le 34.
\]
Let $u$ be the third vertex of $T$. Since $\ell$ is the unique low vertex, $d_X(u)\ge 3$.

Suppose an edge $ab$ of $C$ can be chosen so that
\begin{equation}
N_X(u)\not\subseteq\{a,b,x\}.
\end{equation}
Use the three cliques $T_0$, $xyz$, and $ab$. The remaining bipartite graph is $25\times25$. Every remaining $X$-vertex had at least 24 neighbors in $S$ and loses at most five, so its degree is at least 19. If Hall fails on a set of size $t\ge 20$, put $r_0=26-t$. Then at least $r_0$ opposite vertices have original $X$-degree at most $r_0+2$, accounting for the three deleted $X$-vertices. For $r_0\ge 2$, their $S$-degree is at least $24-r_0$, so they are incident with at least
\[
r_0(24-r_0)-\binom{r_0}{2}\ge 43>e_S
\]
distinct $S$-edges, impossible. For $r_0=1$, a bad vertex in $R$ would have neighbors in at most the two components represented by $C$ and the singleton $x$, contradicting Lemma~\ref{lem:minimal-barrier}; the only other possible bad vertex is $u$, and (52) leaves it an undeleted $X$-neighbor. Hence Hall holds and again yields a 28-clique partition.

The only obstruction to (52) is that $C$ has two vertices and
\begin{equation}
N_X(u)=V(C)\cup\{x\}.
\end{equation}
Indeed, if $|C|=3$, connectedness of $C$ and $d_X(u)\ge 3$ always allow an edge of $C$ omitting an $X$-neighbor of $u$ other than $x$; if $|C|=2$, its unique edge fails exactly in (53). In that last case $T\cup\{x\}$ is a $K_4$ in $H$. Use as cliques $T_0$, this $K_4$, and the edge $C$. Since $e_X>0$, $H-S$ has exactly 26 odd components, so (46) holds. Delete the singleton odd component containing $x$. The component-incidence bipartite graph still satisfies Hall for all 24 vertices of $R$, so match them to 24 distinct remaining odd components and choose an adjacent vertex in each matched component. These give 24 clique edges; one $X$-vertex remains as a singleton clique. Altogether
\[
1+1+1+24+1=28
\]
cliques cover $H$, a contradiction. Thus $s=30$ is impossible.

\subsection{\texorpdfstring{Eliminating $s=31$}{Eliminating s=31}}
Now $|X|=27$ and (43) forces all vertices of $X$ to be isolated in $H[X]$. Equation (47) and (39) give
\begin{equation}
55\le e_S\le 57.
\end{equation}
The expansion (45) holds for the 25-set $R$. Consequently, after deleting any two vertices of $X$, Hall still matches all of $R$ into the remaining 25 vertices of $X$.

Call a triangle $T\subseteq S$ \emph{splittable} if its vertices can be written $\{u,y,z\}$ and there exist distinct $x,x'\in X$ such that $xyz$ is a triangle and $ux'$ is an edge. We need one elementary estimate. If a triangle contains no vertex of $X$-degree zero and is not splittable, then it is incident with at least 48 distinct $S$-edges. If no edge of the triangle has a common $X$-neighbor, the three $X$-neighborhoods are pairwise disjoint, so their total $X$-degree is at most 27. Their total degrees are at least 78, hence their $S$-degrees sum to at least 51, giving at least $51-3=48$ incident edges. Otherwise suppose $yz$ has a common $X$-neighbor. If the opposite vertex $u$ had at least two $X$-neighbors, one could choose one distinct from the common neighbor and split the triangle. Hence $u$ has exactly one $X$-neighbor $x_0$, and non-splittability forces $N_X(y)\cap N_X(z)=\{x_0\}$. The other two triangle edges are also good through $x_0$; applying the same argument to them forces all three vertices to have exactly the single $X$-neighbor $x_0$. Their $S$-degrees are then at least 25, so the triangle is incident with at least $75-3=72$ $S$-edges.

If neither $T_1$ nor $T_2$ contains an $X$-degree-zero vertex, they cannot both be nonsplittable: two disjoint sets of at least 48 incident edges overlap in at most the nine possible edges between the triangles, which would force $e_S\ge 87$. Hence one triangle is splittable; keep the other as an $S$-only triangle. If all zero-$X$ vertices of $U$ lie in one of the two triangles, keep that triangle as the $S$-only one. The other must be splittable, for otherwise its at least 48 incident edges together with the at least 26 edges incident to one zero-$X$ vertex would imply $e_S\ge 48+26-3>57$.

Thus, unless there is one zero-$X$ vertex in each $T_i$, we have an $S$-only triangle $C$ and a splittable triangle $\{u,y,z\}$. Choose distinct $x,x'\in X$ with $xyz$ a triangle and $ux'$ an edge. After deleting $x,x'$, (45) and Hall match all 25 vertices of $R$ into the remaining 25 vertices of $X$. The cliques
\[
C,\qquad xyz,\qquad ux',\qquad\text{and the 25 matching edges}
\]
form a 28-clique partition, impossible.

Finally suppose $z_1\in T_1$ and $z_2\in T_2$ are zero-$X$ vertices. There is no third such vertex in $U$, because three vertices of $S$-degree at least 26 are incident with at least
\[
3\cdot26-\binom32=75
\]
distinct $S$-edges, contradicting (54). The vertices $z_1,z_2$ must be adjacent in $H$; otherwise $X\cup\{z_1,z_2\}$ is an independent 29-set. Their incident $S$-edges number at least $26+26-1=51$, so
\[
\e\bigl(H[S\setminus\{z_1,z_2\}]\bigr)\le 6.
\]
Every other vertex of $S$ therefore has $S$-degree at most 8, hence $X$-degree at least 18. Write the other two vertices of $T_i$ as $a_i,b_i$. Each pair $a_i,b_i$ has at least $18+18-27=9$ common neighbors in $X$, so choose distinct $x_1,x_2$ with $a_ib_ix_i$ triangles. After deleting $x_1,x_2$, (45) matches all 25 vertices of $R$ to the remaining 25 vertices of $X$. The edge $z_1z_2$, the two cross triangles, and the 25 matching edges form 28 cliques, the final contradiction.

We have exhausted (44), so the case (37) is impossible.

\begin{theorem}\label{thm:r29}
Albertson's conjecture holds for $r=29$.
\end{theorem}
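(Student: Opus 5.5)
The plan is to assemble the exclusions already proved in this section, following the proofs of Theorems~\ref{thm:r27} and~\ref{thm:r28}. Suppose some graph $G_0$ has $\chi(G_0)\ge 29$ and $\crn(G_0)<\crn(K_{29})$. By Lemma~\ref{lem:critical-reduction}, $G_0$ contains a $29$-critical subgraph $G$. Crossing number is monotone under subgraphs, so $\crn(G)\le\crn(G_0)<\crn(K_{29})\le Z(29)=8281$. Thus $G$ is a $29$-critical counterexample, and Corollary~\ref{cor:degree-gain} gives the degree and edge bounds (29) for it.

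Next, I will run through the orders $n=|V(G)|$ in the order of the exact order reduction.
\begin{itemize}
\item Orders $n\le 33$ are excluded by Theorem~\ref{thm:small-topological} together with Lemma~\ref{lem:subdivision}.
\item Orders $n=34,35$ are excluded by Theorem~\ref{thm:gks} and the displayed sampling margins.
\item Orders $36\le n\le 51$ are excluded by Theorem~\ref{thm:cranston-ranges}.
\item Orders $52\le n\le 56$ are excluded by Lemma~\ref{lem:join-edge} and the tabulated margins.
\item Orders $59\le n\le 81$ are excluded by the bound $m\ge\lceil 29n/2\rceil$ and Table~\ref{tab:r29}.
\item Orders $n\ge 82$ are excluded by the large-order range, since $2.8118\cdot 29<82$.
\end{itemize}
This leaves only $n\in\{57,58\}$.

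Proposition~\ref{prop:no57} disposes of $n=57$. For $n=58$, the sampling bound leaves only $m\in\{841,842\}$. The Rabern argument excludes the $29$-regular case $m=841$. For $m=842$, the subsections on two disjoint triangles and the Tutte-barrier cases $s\in\{6,26,\dots,31\}$ show that (37) is impossible. Hence no $29$-critical counterexample exists, and so no counterexample with $\chi\ge 29$ exists.

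For completeness, the cases $r\le 28$ follow from Theorem~\ref{thm:cranston-residual}, Theorems~\ref{thm:r25}, \ref{thm:r26}, \ref{thm:r27} and~\ref{thm:r28}. Together with $r=29$, this yields Theorem~\ref{thm:main}.

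The real work lies in the preceding case analysis, which is taken as given. The only point needing care in this assembly is to check that the list of orders has no gaps. In particular, the boundaries $51/52$, $56/57$ and $58/59$ must each be covered by some cited bound, and the endpoint used must be Cranston's proved $1.768r$ (not the printed $1.212r$ or $2.812r$), as noted in Remark~\ref{rem:cranston-endpoints}.
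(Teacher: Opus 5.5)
Your proposal is correct and follows the paper's own assembly: critical reduction, then the exact order reduction leaving $n\in\{57,58\}$, then Proposition~\ref{prop:no57} for $n=57$, Rabern's inequality for the $29$-regular case $m=841$, and the complement/Tutte-barrier analysis for $m=842$. One small slip: Remark~\ref{rem:cranston-endpoints} contrasts the proved endpoints $1.228r$ and $2.8118r$ with the printed $1.212r$ and $2.812r$, so $1.768r$ is just the upper end of the intermediate range and not one of the disputed values; for $r=29$ the covered orders $36\le n\le 51$ and $n\ge 82$ come out the same either way.
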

\begin{proof}
The order reduction leaves only 57 and 58. Proposition~\ref{prop:no57} excludes 57. At order 58, Theorem~\ref{thm:rabern} excludes $m=841$, and the preceding complement/barrier analysis excludes $m=842$. Hence no 29-critical counterexample exists, and Lemma~\ref{lem:critical-reduction} completes the proof.
\end{proof}

\section{Completion of the proof}
\begin{proof}[Proof of Theorem~\ref{thm:main}]
For $r\le 24$ the result is Cranston's theorem. The remaining cases follow from Theorems~\ref{thm:r25},~\ref{thm:r26},~\ref{thm:r27},~\ref{thm:r28}, and~\ref{thm:r29}, respectively.
\end{proof}

\appendix
\section{Exact sampling tables}
The following tables record one sampling parameter $q$ for each order handled solely by the minimum-degree estimate. Every margin is an exact positive rational number; no decimal rounding is used.

\begin{table}[htbp]
\centering
\caption{Exact margins for $r=28$, $56\le n\le 78$.}\label{tab:r28}
\small
\begin{tabular}{r r r l}
\toprule
$n$ & $m=14n$ & $q$ & $B_q(n,m)-7098$\\
\midrule
56&784&24&$25508/759$\\
57&798&25&$14049/115$\\
58&812&25&$223237/1035$\\
59&826&26&$6822788/22425$\\
60&840&26&$596932/1495$\\
61&854&27&$7704998/15795$\\
62&868&27&$3691009/6318$\\
63&882&28&$393512/585$\\
64&896&28&$6759242/8775$\\
65&910&29&$2576/3$\\
66&924&29&$12446/13$\\
67&938&30&$254098/243$\\
68&952&30&$278330/243$\\
69&966&31&$16634698/13485$\\
70&980&31&$10795043/8091$\\
71&994&32&$15343447/10788$\\
72&1008&32&$2739681/1798$\\
73&1022&32&$8697283/5394$\\
74&1036&33&$105200417/61380$\\
75&1050&33&$7381619/4092$\\
76&1064&34&$198746387/104346$\\
77&1078&34&$75732713/37944$\\
78&1092&35&$3135587/1496$\\
\bottomrule
\end{tabular}
\end{table}

\begin{table}[htbp]
\centering
\caption{Exact margins for $r=29$, $59\le n\le 81$.}\label{tab:r29}
\small
\begin{tabular}{r r r l}
\toprule
$n$ & $m=\lceil29n/2\rceil$ & $q$ & $B_q(n,m)-8281$\\
\midrule
59&856&25&$3060428/56925$\\
60&870&25&$1079189/7590$\\
61&885&26&$4568227/17940$\\
62&899&26&$3106249/8970$\\
63&914&27&$320837/702$\\
64&928&27&$2904007/5265$\\
65&943&28&$1159634/1755$\\
66&957&28&$443359/585$\\
67&972&28&$352147/405$\\
68&986&29&$8686/9$\\
69&1001&29&$126305/117$\\
70&1015&30&$285080/243$\\
71&1030&30&$2194477/1701$\\
72&1044&31&$42841/31$\\
73&1059&31&$8097427/5394$\\
74&1073&32&$98669/62$\\
75&1088&32&$3079493/1798$\\
76&1102&33&$132691913/73656$\\
77&1117&33&$12887759/6696$\\
78&1131&34&$25449437/12648$\\
79&1146&34&$40544035/18972$\\
80&1160&35&$113386/51$\\
81&1175&35&$7030465/2992$\\
\bottomrule
\end{tabular}
\end{table}

\section*{Statement on generative AI use}
Generative AI tools, including large language models, were used substantially during the development of this work to assist with mathematical exploration, intermediate checking, and manuscript drafting. AI-generated outputs were treated as provisional and were subject to human review and independent verification where appropriate. The human authors take full responsibility for all mathematical statements, proofs, citations, computations, and conclusions presented in this work.

\end{document}